\pgfplotsset{compat=1.15}
\definecolor{mylinkcolor}{rgb}{0.5,0.0,0.0}
\definecolor{myurlcolor}{rgb}{0.0,0.0,0.7}
\definecolor{mypink}{rgb}{0.85,0.08,0.48}
\newcommand{\lmfdbclass}[2]{\href{https://www.lmfdb.org/Genus2Curve/Q/#1/#2/}{#1.#2}}
\newcommand{\lmfdbcurve}[4]{\href{https://www.lmfdb.org/Genus2Curve/Q/#1/#2/#3/#4}{#1.#2.#3.#4}}
\newcommand{\Z}{\mathbb{Z}}
\newcommand{\Q}{\mathbb{Q}}
\newcommand{\R}{\mathbb{R}}
\newcommand{\C}{\mathbb{C}}
\newcommand{\F}{\mathbb{F}}
\newcommand{\Qbar}{\overline{\Q}}
\newcommand{\Fbar}{\overline{\F}}
\newcommand{\Zhat}{\widehat{\Z}}
\renewcommand{\P}{\mathbb{P}} 
\newcommand{\P}{\mathbb{P}} 
\newcommand{\Half}{\mathbb{H}} 
\newcommand{\A}{\mathcal{A}} 
\newcommand{\rhobar}{\overline{\rho}}
\newcommand{\kbar}{\overline{k}}
\newcommand{\calL}{\mathcal{L}}
\newcommand{\dual}[1]{#1^\vee} 
\DeclareMathOperator{\cond}{cond}
\DeclareMathOperator{\End}{End}
\DeclareMathOperator{\Frob}{Frob}
\DeclareMathOperator{\Gal}{Gal}
\DeclareMathOperator{\GSp}{GSp}
\DeclareMathOperator{\Jac}{Jac}
\DeclareMathOperator{\Res}{Res}
\DeclareMathOperator{\Sp}{Sp}
\DeclareMathOperator{\Diag}{Diag}
\newcommand{\Otilde}{\widetilde{O}}
\DeclareMathOperator{\im}{Im}
\DeclarePairedDelimiter{\paren}{(}{)} 
\DeclarePairedDelimiter\abs{\lvert}{\rvert}
\DeclarePairedDelimiter{\ceil}{\lceil}{\rceil}
\numberwithin{equation}{section}
\newtheorem{prop}[equation]{Proposition}
\Crefname{prop}{Proposition}{Propositions}
\newtheorem{thm}[equation]{Theorem}
\Crefname{thm}{Theorem}{Theorems}
\newtheorem{lem}[equation]{Lemma}
\newtheorem{cor}[equation]{Corollary}
\theoremstyle{definition}
\newtheorem{remark}[equation]{Remark}
\newtheorem{algorithm}[equation]{Algorithm}
\title[Computing isogeny classes of typical p.p. abelian surfaces]{Computing isogeny classes of typical principally polarized abelian surfaces over the rationals}
\author[R. van Bommel]{Raymond van Bommel}
\address{Department of Mathematics, Massachusetts Institute of Technology, Cambridge, MA 02139-4307, USA}
\email{bommel@mit.edu}
\urladdr{\url{https://raymondvanbommel.nl}}
\author[S. Chidambaram]{Shiva Chidambaram}
\address{Department of Mathematics, Massachusetts Institute of Technology, Cambridge, MA 02139-4307, USA}
\email{shivac@mit.edu}
\urladdr{\url{https://math.mit.edu/~shivac}}
\author[E. Costa]{Edgar Costa}
\address{Department of Mathematics, Massachusetts Institute of Technology, Cambridge, MA 02139-4307, USA}
\email{edgarc@mit.edu}
\urladdr{\url{https://edgarcosta.org}}
\author[J. Kieffer]{Jean Kieffer}
\address{Department of Mathematics, Harvard
  University, Cambridge, MA 02138, USA}
\email{kieffer@math.harvard.edu}
\urladdr{\url{https://scholar.harvard.edu/kieffer}}
\thanks{The first three authors were supported by Simons Foundation grant 550033.
The fourth author was supported by Simons Foundation grant 550031.}
\date{\today}
\begin{document}

\begin{abstract}
  We describe an efficient algorithm which, given a principally
  polarized~(p.p.)  abelian surface $A$ over~$\Q$ with geometric endomorphism
  ring equal to~$\Z$, computes all the other p.p.~abelian surfaces over~$\Q$
  that are isogenous to $A$.  This algorithm relies on explicit open image
  techniques for Galois representations, and we employ a combination of
  analytic and algebraic methods to efficiently prove or disprove the existence
  of isogenies.  We illustrate the practicality of our algorithm by applying it
  to \numprint{1440894} isogeny classes of Jacobians of genus 2 curves.
\end{abstract}

\maketitle

\section{Introduction}

As a consequence of Faltings' finiteness theorems for abelian
varieties~\cite{Faltings83}, the isogeny class of any abelian variety over a
number field is finite.  In the simplest case of elliptic curves over~$\Q$, we
have a good understanding of which shapes of isogeny classes can arise.
Mazur's isogeny theorem for elliptic curves~\cite{Mazur} provides the list of
primes~$\ell$ appearing as degrees of isogenies over $\Q$, namely $\ell\leq 19$
or~$\ell\in\{37,43,67,163\}$.  Furthermore, isogeny classes all have size at
most~$8$~\cite{Kenku}.  In fact, the possible isogeny graphs can be explicitly
listed~\cite[\S6]{Chiloyan}.

The standard approach to computing isogeny classes of elliptic curves over~$\Q$
is the following.  Given an elliptic curve $E$, it is enough to consider
isogenies $E\to E'$ of prime degree~$\ell$ where~$\ell$ appears in Mazur's
list.  For each such~$\ell$, computing the possible image curves~$E'$ is a
finite problem.  The computation can be carried out either by factoring
the~$\ell$-division polynomial of~$E$ and applying Vélu's formulas~\cite{Velu},
or more efficiently using explicit parametrizations or equations for the Hecke
correspondence $X_0(\ell)\to X(1)\times X(1)$~\cite{Elkies,
  CremonaWatkins}. This method was used to generate the data currently
contained in the $L$-functions and modular forms database (LMFDB)\footnote{See
  \url{https://www.lmfdb.org/EllipticCurve/Q/Source}.}~\cite{LMFDB}.

As one moves away from the case of elliptic curves over~$\Q$, very little is
known on the theoretical side.  Nevertheless, by carrying out explicit
computations, one can hope to gain insight into the possible shapes of isogeny
classes in higher dimensions.  In this paper, we make a first step in this
direction: we describe an algorithm to compute isogeny classes in the simplest
higher-dimensional case, namely that of a principally polarized~(p.p.) abelian
surface~$A$ over~$\Q$.  We also make the simplifying assumption that the
geometric endomorphism ring of~$A$ is~$\Z$, in other words, $A$ is typical,
although we plan to address more endomorphism ring types in future work.

Our overall strategy is to generalize the above method of computing isogeny
classes to the case of abelian surfaces. However, several obstacles immediately
arise:
\begin{enumerate}
\item Isogenies no longer decompose into rational, prime-degree isogenies.
\item There is no known analogue of Mazur's theorem for higher-dimensional
  abelian varieties.
\item Division polynomials are too big to be efficiently computed, except for
  very small values of~$\ell$.  The same is true of explicit equations for
  higher-dimensional analogues of the modular curve~$X_0(\ell)$ studied
  in~\cite{Milio}: see~\cite{Milio-modpols} for examples for very small
  primes~$\ell$.
\end{enumerate}

We address the first issue in \Cref{sec:isog-types}, where we show that we only
need to consider two types of isogenies of degree $\ell^2$ and $\ell^4$
respectively, where $\ell$ is a prime.

We circumvent the second issue in the following way. From any given typical
abelian surface, Serre's open image theorem for Galois
representations~\cite{SerreOeuvres} asserts that isogenies of the above types
can exist over~$\Q$ only for a finite number of primes~$\ell$ (depending on the
abelian surface). It is sufficient to consider these primes to enumerate the
``neighbors'' of~$A$ in the isogeny class. Further,
Dieulefait~\cite{Dieulefait} describes how to efficiently compute a finite
superset of this list. We review this method in \Cref{sec:subgroups}, and
provide complete proofs for the reader's convenience.

To address the third issue, we advantageously replace purely algebraic methods
by complex-analytic ones relying on the Siegel moduli space of complex
p.p.~abelian surfaces.  Concretely, given an abelian surface~$A$, we enumerate
images of its period matrix under certain Hecke correspondences, and compute
modular invariants at these points analytically.  By keeping track of the
correct scaling factors, we can actually compute these invariants as algebraic
integers (embedded in~$\C$), and thus provably recognize which of these tuples
of complex invariants correspond to abelian surfaces defined over~$\Q$.  This
algorithm is the subject of \cref{sec:analytic}.  It is significantly less
expensive than writing down equations for modular varieties or factoring
division polynomials, and is practical for isogeny degrees as large
as~$29^4 = \numprint{707281}$, the largest value we encountered in our
computations.

The output invariants only specify the $\Qbar$-isomorphism class of the abelian
surface~$A'$ isogenous to~$A$.  In \cref{sec:curves}, we explain how to obtain
the correct~$\Q$-isomorphism class using a well-known and completely algebraic
process. First, Mestre's algorithm provides a genus~2 curve over $\Q$ whose
Jacobian is isomorphic to $A'$ over $\Qbar$.  We then identify which quadratic
twist of that curve represents the desired $\Q$-isomorphism class.

The resulting algorithm has been implemented: our code and data is publicly
available at \url{https://github.com/edgarcosta/genus2isogenies}.  Numerical
computations are performed using the C library HDME~\cite{hdme}, itself based
on the Arb library~\cite{Arb} for high-precision arithmetic with
certified error bounds.

We finally discuss applications of our algorithm in \Cref{sec:examples}.  We
first present an illustrative example where we found an isogeny of
degree~$31^2$.  We also report on the results of running our algorithm on a
large dataset of Jacobians of genus 2 curves that includes the current LMFDB
data~\cite{LMFDB}.  This dataset consists of \numprint{1743737} curves split
among \numprint{1440894} isogeny classes. By completing these isogeny classes,
we find \numprint{600948} new curves.

\subsection*{Acknowledgements}
We thank Fabien Cléry for communicating us references on Siegel modular forms,
Andrew Sutherland and Noam Elkies for providing us with interesting genus~$2$
curves to use as an input to our algorithm, and Bjorn Poonen for helpful
conversations.

\section{Classification of isogenies}
\label{sec:isog-types}

In this section, we describe two fundamental isogeny types that are sufficient
to exhaust isogeny classes of typical p.p.~abelian surfaces. The essential
ingredient in this classification is that every isogeny between p.p.~abelian
varieties is compatible with the given polarizations up to the action of an
endomorphism fixed by the Rosati involution, as we detail below.

\subsection{Isogenies between p.p.~abelian varieties}

Let~$k$ be a field. For simplicity, we assume thoughout that~$k$ has characteristic
zero: this allows us to identify group schemes over~$k$ with the
groups of their $\kbar$-points endowed with an action
of~$\Gal(\kbar/k)$. Unless otherwise specified, we only consider abelian
varieties and isogenies that are defined over~$k$. Two isogenies
are considered isomorphic if they have the same domain and differ by an
isomorphism on their targets.

Let~$A$ be an abelian variety over~$k$, and denote its dual by~$A^\vee$. A
\emph{polarization} on $A$ is an isogeny $\lambda \colon A \to A^\vee$ of the
shape $a \mapsto T_a^* \mathcal{L} \otimes \mathcal{L}^{-1}$ for some (not
necessarily $k$-rational) ample line bundle $\mathcal{L}$ on $A$, where $T_a$
is the translation by $a$ in $A$.  A polarization is called \emph{principal} if
it is an isomorphism.

From now on, we assume that $A$ is principally polarized (p.p.), in other
words~$A$ is endowed with a principal polarization~$\lambda_A$. We then have a
\emph{Rosati involution} on $\End(A)$ given by
$\varphi \mapsto \lambda_A^{-1} \circ \varphi^\vee \circ \lambda_A$. An
endomorphism $\beta\in \End(A)$ is called \emph{symmetric} if it is invariant
under this involution. If~$\beta$ is symmetric, then the roots of its
characteristic polynomial are real numbers~\cite[Thm.~6 p.\,208]{Mumford}, and
we further say that $\beta$ is \emph{totally positive} if these roots are
positive.

For~$A$ as above and any integer $n$, there exists a canonical symplectic
pairing $A[n] \times A[n] \to \mu_n$ known as the \emph{Weil pairing}. More
generally, if $\beta \in \End(A)$ is any endomorphism that is symmetric and
totally positive, then $\lambda_A\circ\beta$ is another polarization of~$A$
\cite[(3) p.\,190 and (IV)
p.\,209]{Mumford}.
As in \cite[Def.~p.\,227]{Mumford}, we can also define a symplectic pairing on
$A[\beta] \times A[\beta]$ that we also refer~to as the Weil pairing. We then
have the following characterization of isogenies in terms of maximal isotropic
subgroups of torsion subgroups $A[\beta]$; see also \cite[Prop.~11.25]{EvdGM}.

\begin{lem}\label{lem:isog-subgroup}
  Let $(A, \lambda_A)$ be a p.p.~abelian variety over~$k$.  Then there is a
  one-to-one correspondence between isomorphism classes of isogenies from $A$
  to other p.p.~abelian varieties~$A'$, and pairs $(\beta, G)$, where $\beta$
  is a totally positive symmetric endomorphism of $A$, and $G$ is a subgroup of
  $A[\beta]$ which is defined over $k$ and maximal isotropic with respect to
  the Weil pairing.  Explicitly, an isogeny $\varphi \colon A \to A'$
  corresponds to the pair $(\beta, G)$ such that $\ker \varphi = G$ and $\beta$
  is the unique endomorphism satisfying
  $\varphi^\vee \circ \lambda_{A'} \circ \varphi = \lambda_A \circ \beta$.
\end{lem}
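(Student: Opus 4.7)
The plan is to exhibit explicit maps in both directions between isomorphism classes of isogenies and pairs $(\beta,G)$, and show they are inverse to each other. The forward map is exactly the one written in the statement: given an isogeny $\varphi\colon(A,\lambda_A)\to(A',\lambda_{A'})$, set $G=\ker\varphi$ and $\beta=\lambda_A^{-1}\circ\varphi^\vee\circ\lambda_{A'}\circ\varphi$. The reverse map sends $(\beta,G)$ to the quotient $\pi\colon A\to A/G$ equipped with a suitable principal polarization. The nontrivial content concentrates on the reverse direction: one must construct a principal polarization on $A/G$ from the data of $\beta$ and $G$, and this is exactly where the maximal isotropy hypothesis enters.

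For the forward direction, the endomorphism $\beta$ is $k$-rational since $\varphi$, $\lambda_A$, $\lambda_{A'}$ all are. Its symmetry $\lambda_A\beta=\beta^\vee\lambda_A$ is a one-line calculation using biduality $(\varphi^\vee)^\vee=\varphi$ together with the symmetry of the polarizations $\lambda_A^\vee=\lambda_A$ and $\lambda_{A'}^\vee=\lambda_{A'}$. Total positivity follows from the fact that $\lambda_A\circ\beta=\varphi^\vee\circ\lambda_{A'}\circ\varphi$ is the pullback of $\lambda_{A'}$ along $\varphi$, hence itself a polarization coming from an ample line bundle~\cite[Chap.~6]{Mumford}. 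The inclusion $G=\ker\varphi\subseteq\ker(\lambda_A\beta)=A[\beta]$ is immediate from the formula, and the degree identity $\deg\beta=(\deg\varphi)^2$ gives $|G|^2=|A[\beta]|$. Isotropy of $G$ with respect to the Weil pairing attached to $\lambda_A\beta$ on $A[\beta]$ follows from the standard compatibility of the Weil pairing with pushforward along $\varphi$; combined with the order count this forces $G$ to be \emph{maximal} isotropic.

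For the reverse direction, let $A':=A/G$ and let $\pi\colon A\to A'$ denote the quotient, which is defined over $k$ by $k$-rationality of $G$. Because $G\subseteq A[\beta]=\ker(\lambda_A\beta)$, the composition $\lambda_A\beta$ factors through~$\pi$; the claim is that it even descends to a unique map $\lambda_{A'}\colon A'\to(A')^\vee$ with $\pi^\vee\circ\lambda_{A'}\circ\pi=\lambda_A\circ\beta$, and that this $\lambda_{A'}$ is an isomorphism, hence a \emph{principal} polarization. This is precisely the content of the maximal isotropy of~$G$ inside $A[\beta]$ with respect to the Weil pairing attached to $\lambda_A\beta$, combined with the degree count $|A[\beta]|=|G|^2=\deg(\pi)^2$. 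This descent statement is the classical one recorded in~\cite[Prop.~11.25]{EvdGM}, and it is the main technical step in the proof.

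It remains to check that the two constructions are mutually inverse. Starting from $(\beta,G)$, the isogeny $\pi\colon A\to A/G$ visibly has $\ker\pi=G$, and by construction of $\lambda_{A'}$ the formula $\pi^\vee\circ\lambda_{A'}\circ\pi=\lambda_A\circ\beta$ recovers the same $\beta$. Conversely, starting from an isogeny $\varphi\colon A\to A'$ with associated pair $(\beta,G)$, the universal property of quotients identifies $\varphi$ with the quotient $A\to A/G$ up to a unique isomorphism on the target, and uniqueness of the descended polarization ensures that this isomorphism carries the descended principal polarization on $A/G$ to $\lambda_{A'}$; so the two isogenies are isomorphic in the sense defined at the start of the section.
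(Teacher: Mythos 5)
Your proposal is correct and follows essentially the same route as the paper: the same forward construction of $\beta$ via the pullback polarization (with total positivity from Mumford's correspondence between polarizations and totally positive symmetric endomorphisms, and maximality of $\ker\varphi$ from the cardinality bound), and the same descent of the polarization $\lambda_A\circ\beta$ to a principal polarization on $A/G$ for the reverse direction, citing the same standard result (\cite[Prop.~11.25]{EvdGM} versus Mumford's Corollary on p.\,231). The only difference is that you spell out the verification that the two constructions are mutually inverse, a step the paper leaves implicit.
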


\begin{proof}
  First, we note that any isotropic subgroup of $A[\beta]$ has cardinality at
  most $\sqrt{\#A[\beta]}$, and is maximal if and only if equality holds
  \cite[Thm.~4 p.\,233]{Mumford}.

  Let $(A', \lambda_{A'})$ be a p.p.~abelian variety, and let
  $\varphi \colon A \to A'$ be an isogeny. Then there is a unique
  endomorphism~$\beta$ of~$A$ such that
  $\varphi^\vee \circ \lambda_{A'} \circ \varphi = \lambda_A \circ \beta$.
  This $\beta$ is fixed by the Rosati involution, and corresponds via the
  bijection of \cite[Application III p.\,208]{Mumford} to an ample line bundle
  on~$A$, namely the pullback by~$\varphi$ of the ample line bundle on~$A'$
  defining the polarization~$\lambda_{A'}$. Thus~$\beta$ is totally positive.
  Moreover, the pairing on $A[\beta]$ becomes trivial on $\ker{\varphi}$ by
  \cite[(1) p.\,228]{Mumford}.  As
  $\deg(\varphi)^2 = \deg(\varphi)\deg(\varphi^\vee) = \deg(\beta)$, it follows
  that $\ker{\varphi}$ is a maximal isotropic subgroup of $A[\beta]$.

  On the other hand, if $\beta$ and $G$ are given, then $\lambda_A \circ \beta$
  is a polarization on~$A$, and the
  quotient~$A/G$ is principally polarized by \cite[Cor.~p.\,231]{Mumford}.
\end{proof}

\subsection{Fundamental isogeny types} Assume now that~$k$ is a number
field. An abelian variety~$A$ over~$k$ is called {\em typical} if
$\End(A_{\overline{\Q}}) = \Z$. If~$A$ is typical, then only a small number of
isogeny types suffice in order to enumerate its isogeny class.

\begin{lem}\label{lem:isog-decomp}
  Any isogeny between typical p.p.~abelian varieties over a number field~$k$
  can be decomposed into a chain of isogenies $\varphi\colon A \to A'$ defined
  over~$k$ whose kernels are maximal isotropic subgroups of either $A[\ell]$ or
  $A[\ell^2]$, where $\ell$ is a prime number (depending on~$\varphi$).
\end{lem}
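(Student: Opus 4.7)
The plan is to apply \Cref{lem:isog-subgroup}, reduce to prime-power $n$ via primary decomposition, and then induct on the $\ell$-adic exponent.

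First, since $A$ is typical, $\End(A_{\kbar}) = \Z$ and hence $\End(A) = \Z$, so the only totally positive symmetric endomorphisms of $A$ are positive integers. By \Cref{lem:isog-subgroup}, any isogeny from $A$ to another p.p.~abelian variety is specified by a pair $(n, G)$ with $n \in \Z_{>0}$ and $G \subset A[n]$ a $k$-rational maximal isotropic subgroup. Writing $n = \prod_i \ell_i^{e_i}$, the decomposition $A[n] = \bigoplus_i A[\ell_i^{e_i}]$ and the vanishing of the Weil pairing between distinct primary parts give $G = \bigoplus_i G_i$ with each $G_i = G \cap A[\ell_i^{e_i}]$ maximal isotropic and $k$-rational, and $\varphi$ factors as the chain whose successive kernels are the $G_i$. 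Hence it suffices to treat the case $n = \ell^e$ for a single prime $\ell$, which I would do by induction on $e$.

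The base cases $e \in \{1, 2\}$ are exactly the allowed types. For $e \geq 3$, my approach is to peel off a first step of $\beta = \ell$ type, reducing the exponent by one. The key is the Weil-pairing compatibility $e_{\ell^e}(x, g) = e_\ell(x, \ell^{e-1}g)$ for $x \in A[\ell]$ and $g \in A[\ell^e]$, which combined with the maximal isotropy of $G$ yields $(G \cap A[\ell])^{\perp_{A[\ell]}} = \ell^{e-1} G$: one inclusion follows from $e_\ell(\ell^{e-1}g, x) = e_{\ell^e}(g, x) = 1$ for $g \in G$ and $x \in G \cap A[\ell]$, the other from counting in the elementary-divisor normal form of $G$. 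Thus $G \cap A[\ell]$ is coisotropic in $A[\ell]$ and contains a maximal isotropic subgroup $G_1$ of $A[\ell]$. Taking such a $G_1$ defined over $k$, the first step $A \to A/G_1$ is of type $\beta = \ell$, and the induced isogeny $A/G_1 \to A/G$ has $k$-rational kernel $G/G_1$ lying in $(A/G_1)[\ell^{e-1}]$ (since $\ell^{e-1} G \subset G_1$), again maximal isotropic by degree counting and since $A/G_1$ is typical. Induction then concludes.

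The hard part will be ensuring that $G_1$ can be chosen $k$-rationally. The Galois group acts on the symplectic quotient $(G \cap A[\ell])/\ell^{e-1}G$, and when no maximal isotropic subspace there is Galois-fixed one must instead begin with a $\beta = \ell^2$ step: the canonical subgroup $A[\ell] \subset A[\ell^2]$ is always $k$-rational and works when $A[\ell] \subset G$. Handling the remaining configurations calls for a case analysis according to the elementary-divisor type of $G$ viewed as a symplectic sublattice of $A[\ell^e]$, combining the two available step types to yield a $k$-rational intermediate in every case.
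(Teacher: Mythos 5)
Your reduction to the prime-power case via \cref{lem:isog-subgroup} and primary decomposition matches the paper, but your inductive step for $e\geq 3$ has a genuine gap that you yourself flag as ``the hard part'' and never close. You want to peel off a 1-step isogeny by choosing a maximal isotropic subgroup $G_1$ of $A[\ell]$ with $\ell^{e-1}G\subseteq G_1\subseteq G\cap A[\ell]$ and defined over~$k$. Such a $G_1$ is genuinely a \emph{choice}: the candidates are the maximal isotropic subspaces of $A[\ell]$ squeezed between the isotropic subspace $\ell^{e-1}G$ and its orthogonal complement $G\cap A[\ell]$, and $\Gal(\kbar/k)$ may permute them without fixing any (this is exactly the phenomenon recorded in \cref{rem:2step_isogeny_kernel} for $e=2$: the $\ell+1$ ways to refine a 2-step isogeny into two 1-step isogenies need not contain a rational one). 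Your fallback of starting with a $\beta=\ell^2$ step only covers the configuration $A[\ell]\subset G$, and the ``case analysis according to the elementary-divisor type'' that is supposed to handle everything else is precisely the content that is missing; as stated, the proof does not go through.

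The paper avoids this obstruction entirely by making a \emph{canonical} choice: for $e>2$ it sets $K'=\ell K\cap A[\ell^{e-2}]=\ell K[\ell^{e-1}]$, which is built functorially from $K$ by Galois-equivariant operations and is therefore automatically defined over~$k$; the work then goes into showing (by the rank count $r_1+r_2=2g$) that $K'$ is maximal isotropic in $A[\ell^{e-2}]$, so that $\varphi$ splits as $A\to A/K'$ followed by $A/K'\to A/K$, both of strictly smaller degree. Note that this canonical subgroup reduces the exponent by $2$ rather than $1$, which is why the statement must allow kernels in $A[\ell^2]$ and not just $A[\ell]$ --- another sign that a reduction purely by 1-step moves, as you attempt, cannot work in general. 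If you want to salvage your route, you should replace the choice of $G_1$ by this canonical $K'$ (or prove rationality of some canonical refinement in each elementary-divisor case, which amounts to rediscovering it).
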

\begin{proof}
  Let $\varphi \colon A \to A'$ be any isogeny. Since~$A$ is typical, by
  \cref{lem:isog-subgroup}, there exists an integer $n\geq 1$ such that
  $K \coloneqq \ker{\varphi} \subset A[n]$ is maximal isotropic. Factor
  $n = \ell_1^{e_1} \cdot \ldots \cdot \ell_r^{e_{r}}$. Then
  $K \cap A[\ell_i^{e_i}]$ is maximal isotropic inside $A[\ell_i^{e_i}]$ for
  all $i = 1, \ldots, r$. Indeed, by \cite[Lemma~16.1]{MilneBook} the subgroup
  is isotropic, and by checking the cardinality, one sees that the subgroup is
  maximal isotropic. As each subgroup $K \cap A[\ell_i^{e_i}]$ is defined over $k$, we can
  decompose $\varphi$ as the composition of isogenies whose degrees are prime
  powers.

  Now suppose that $n = \ell^e$ is a prime power, and that $e > 2$.  Then we
  claim that
  \[
    K' \coloneqq \ell K \cap A[\ell^{e-2}] = \ell K[\ell^{e-1}]
  \]
  is a maximal isotropic subgroup of $A[\ell^{e-2}]$. We will then be able to
  decompose $\varphi$ into two isogenies of lower degree, namely $A\to A/K'$
  and $A/K' \to A/K$.

  First, we prove that $K'$ is isotropic: if
  $\langle \cdot, \cdot \rangle_{\ell^i}$ denotes the Weil pairing on
  $A[\ell^i]$ (for any~$i\geq 1$), and $\ell x_1, \ell x_2$ for
  $x_1, x_2 \in K$ are arbitrary elements of $K'$, we have that
  $\langle \ell x_1, \ell x_2 \rangle_{\ell^{e-2}} = \langle x_1, x_2
  \rangle_{\ell^e} = 1$ by \cite[Lemma~16.1]{MilneBook}. To see that~$K'$ is
  maximal, we determine its cardinality.  Let $r_1$ and $r_2$ be the ranks of
  $K[\ell]$ and~$K/K[\ell^{e-1}]$ as $\Z/\ell\Z$-modules. We have an exact
  sequence
  \[
    0 \to K[\ell] \to K[\ell^{e-1}] \stackrel{\ell}{\to} K' \to 0
  \]
  and $\#K[\ell^{e-1}] = \#K/\ell^{r_2}$. Hence
  \[
    \# K' = \# K[\ell^{e-1}]/\#K[\ell] = \#K/\ell^{r_1+r_2} = \ell^{eg - r_1 - r_2}.
  \]

  On the other hand, we have $r_1+r_2\leq 2g$ because the Weil pairing
  on~$A[\ell] \times A[\ell]$ vanishes on
  $K[\ell] \times \ell^{e-1}(K/K[\ell^{e-1}])$, and these subspaces have
  dimensions $r_1$ and $r_2$ over~$\Z/\ell\Z$ respectively. Indeed, for
  $x \in K[\ell]$ and $y \in \ell^{e-1}(K/K[\ell^{e-1}])$, we can write $x$ as
  $\ell^{e-1}x'$ for some $x' \in A[\ell^e]$ and $y$ as $\ell^{e-1}y'$ for some
  $y' \in K$, and then
  $$\langle x,y \rangle_{\ell} = \langle \ell^{e-1}x', \ell^{e-1}y'
  \rangle_{\ell} = \langle x', y'\rangle_{\ell^e}^{\ell^{e-1}} = \langle
  \ell^{e-1}x', y' \rangle_{\ell^e} = \langle x, y' \rangle_{\ell^e} = 0,$$ by
  \cite[Lemma~16.1]{MilneBook} and the fact that $K$ is isotropic.

  Since~$K'$ is isotropic for the Weil pairing on~$A[\ell^{e-2}]$ and $r_1+r_2\leq 2g$, we must have $r_1+r_2=2g$ and $K'$ is maximal
  isotropic in~$A[\ell^{e-2}]$.
\end{proof}

From now on, we focus on the case of typical p.p.~abelian surfaces. Let~$\ell$ be a prime. We say that an isogeny $\varphi:A\to A'$ is
\begin{itemize}
\item a \emph{1-step $\ell$-isogeny}, if $\ker(\varphi)$ is maximal isotropic in $A[\ell]$, and
\item a \emph{2-step $\ell$-isogeny}, if $\ker(\varphi)$ is maximal isotropic in $A[\ell^2]$ and
  isomorphic to $(\Z/\ell\Z)^2 \times \Z/\ell^2\Z$ as an abstract abelian
  group.
\end{itemize}
Note that the terminology ``2-step $\ell$-isogeny'' is slightly abusive, as the endomorphism~$\beta$ associated
with~$\varphi$ via \cref{lem:isog-subgroup} is actually $\ell^2$ in that case.

\begin{prop}\label{prop:isog-decomp}
  Any isogeny between typical p.p.~abelian surfaces over a number field~$k$ can
  be decomposed into a chain of 1-step $\ell$-isogenies, 2-step
  $\ell$-isogenies, and multiplication-by-$\ell$ endomorphisms for a series of
  primes~$\ell$, all defined over~$k$.
\end{prop}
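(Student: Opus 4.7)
The plan is to refine the decomposition produced by \cref{lem:isog-decomp}, which already reduces any isogeny between typical p.p.\ abelian surfaces to a chain whose kernels are maximal isotropic in either $A[\ell]$ or $A[\ell^2]$ for some prime~$\ell$. Kernels maximal isotropic in $A[\ell]$ are 1-step $\ell$-isogenies by definition, so the entire task reduces to the case of a maximal isotropic subgroup $K \subset A[\ell^2]$ of order $\ell^4$, which I would classify by the abstract abelian group structure of $K$. The three possibilities are $(\Z/\ell\Z)^4$, $(\Z/\ell\Z)^2 \oplus \Z/\ell^2\Z$, and $(\Z/\ell^2\Z)^2$.

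If $K \cong (\Z/\ell\Z)^4$, then $K \subseteq A[\ell]$ together with $|K| = |A[\ell]|$ forces $K = A[\ell]$, and the corresponding quotient isogeny is multiplication-by-$\ell$ (composed with the canonical isomorphism $A/A[\ell] \simeq A$). If $K \cong (\Z/\ell\Z)^2 \oplus \Z/\ell^2\Z$, it is a 2-step $\ell$-isogeny by definition. The real work is to decompose the remaining case $K \cong (\Z/\ell^2\Z)^2$ as two successive 1-step $\ell$-isogenies.

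For this case, set $K_0 := \ell K = K \cap A[\ell]$, a subgroup of order $\ell^2$. First verify that $K_0$ is maximal isotropic in $A[\ell]$: for $x, y \in K$, the standard compatibility between Weil pairings at different levels (the same tool used in the proof of \cref{lem:isog-decomp}) gives $\langle \ell x, \ell y\rangle_{\ell} = \langle x, y\rangle_{\ell^2}^{\ell}$, which vanishes because $K$ is isotropic in $A[\ell^2]$. Hence $\pi \colon A \to A_1 := A/K_0$ is a 1-step $\ell$-isogeny, and the original $\varphi \colon A \to A/K$ factors as $\varphi = \psi \circ \pi$ with $\ker\psi = K/K_0 \subseteq A_1[\ell]$ of cardinality $\ell^2$.

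To see that $\psi$ is itself a 1-step $\ell$-isogeny, I would apply \cref{lem:isog-subgroup} to $\varphi$ and to $\pi$: the associated endomorphisms are forced by degree considerations and typicality to be $\ell^2$ and $\ell$ respectively, giving $\varphi^\vee \lambda_{A/K} \varphi = \ell^2\,\lambda_A$ and $\pi^\vee \lambda_{A_1} \pi = \ell\,\lambda_A$. Substituting $\varphi = \psi\pi$ into the first, and cancelling $\pi^\vee(-)\pi$, which is injective on morphisms $A_1 \to A_1^\vee$ since $\pi$ is a surjective isogeny of abelian varieties, yields $\psi^\vee \lambda_{A/K} \psi = \ell\,\lambda_{A_1}$. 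Invoking \cref{lem:isog-subgroup} in the other direction then shows that $\ker\psi = K/K_0$ is maximal isotropic in $A_1[\ell]$, so $\psi$ is a 1-step $\ell$-isogeny. Galois-invariance of $K_0$ and $K/K_0$ follows from that of $K$, so the whole decomposition is defined over~$k$. The two delicate points I anticipate are the Weil pairing compatibility used to verify isotropy of $K_0$ and the cancellation argument for $\pi^\vee(-)\pi$, the latter being a short verification that a morphism of abelian varieties whose image lies in a finite subgroup is zero.
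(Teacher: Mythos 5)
Your proposal is correct and follows essentially the same route as the paper: reduce via \cref{lem:isog-decomp} to a kernel $K$ maximal isotropic in $A[\ell^2]$, split into the three cases by the group structure of $K$, and in the $(\Z/\ell^2\Z)^2$ case pass through $A/\ell K$ using the Weil-pairing compatibility to see that $\ell K$ is maximal isotropic in $A[\ell]$. The only difference is that you additionally verify, via the polarization identity $\psi^\vee\lambda_{A/K}\psi = \ell\,\lambda_{A_1}$, that the second factor $A/\ell K \to A/K$ is itself a 1-step $\ell$-isogeny --- a point the paper's proof asserts without comment --- and that verification is sound.
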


\begin{proof}
  By \cref{lem:isog-decomp}, we only need to consider an isogeny~$\varphi$
  whose kernel~$K$ is maximal isotropic inside $A[\ell^2]$.  Then
  $\#K = \ell^4$, so $K$ is isomorphic to $(\Z/\ell\Z)^4$,
  $(\Z/\ell\Z)^2 \times \Z/\ell^2\Z$ or $(\Z/\ell^2\Z)^2$.  In the first case,
  the isogeny~$\varphi$ is multiplication by~$\ell$.  In the second case,
  $\varphi$ is 2-step.  In the third case, the group $K \cap A[\ell] = \ell K$
  is a maximal isotropic subgroup of $A[\ell]$: it is isotropic by \cite[Lemma
  16.1]{MilneBook}, and maximal for cardinality reasons. We can thus decompose
  $\varphi$ into two 1-step isogenies $A\to A/\ell K$ and $A/\ell K\to A/K$.
\end{proof}

\begin{remark}
  \label{rem:2step_isogeny_kernel}
  A 2-step isogeny factors as a composition of two 1-step
  isogenies over~$\kbar$, but these 1-step isogenies are not necessarily defined
  over~$k$.  Here is a more detailed look into this situation.  If $K$ is the
  kernel of a 2-step $\ell$-isogeny, then~$\ell K$ is a rational
  $1$-dimensional subspace of $A[\ell]$. We can extend~$\ell K$ in
  $\ell+1$ ways to get a maximal isotropic subgroup of $A[\ell]$ contained in
  $K$.  These $\ell+1$ ways to factor the 2-step isogeny as a composition of
  two 1-step isogenies are permuted by $\Gal(\kbar/k)$.
\end{remark}

\subsection{Computing an isogeny class}
\label{sub:isog-class}

From now on, we assume that $k=\Q$.  In the following sections, we
will detect the existence of 1-~or 2-step $\ell$-isogenies from~$A$ to another
p.p.~abelian surface by studying the action of $\mathrm{Gal}(\Qbar/\Q)$
on~$A[\ell]$. Let~$\calL_1$ (resp.~$\calL_2$) be the set of primes~$\ell$ such
that $A[\ell]$ admits a 1-dimensional (resp.~2-dimensional and isotropic)
Galois-stable subspace. Then the existence of a rational 1-step (resp.~2-step)
isogeny implies that~$\ell\in \calL_2$ (resp.~$\calL_1$).

Summarizing, we use the following algorithm to compute the isogeny class of a
typical p.p.~abelian surface~$A$ over $\Q$, defined as the set of isomorphism
classes of p.p.~abelian surfaces $A'$ such that there exists an isogeny
$\varphi:A\to A'$ defined over~$\Q$. These abelian surfaces all are the
Jacobian of a genus~2 curve over~$\Q$ by~\cite[Appendix, Thm.\
4]{SerreAppendix}, and this is how we encode both the input and output.

\begin{algorithm}
  \label{alg:isog-class}
  {\em Input:} a genus 2 curve $C$ over $\Q$ such that $A = \Jac(C)$.\\
  {\em Output:} the list of all p.p.~abelian surfaces over $\Q$ that are isogenous to $A$.
  \begin{enumerate}[label={\bf Step \arabic*.},labelwidth=\widthof{{\bf Step 2.}},leftmargin=!]
  \item Use Dieulefait's tests to find finite supersets of~$\calL_2$
    and~$\calL_1$ (see \S\ref{sec:subgroups}).
  \item For each~$\ell$ in these sets, compute invariants for all abelian
    surfaces~$A'$ over~$\Q$ obtained as the image of a 1-step (resp.~2-step)
    $\ell$-isogeny with domain~$A$ (see \S\ref{sec:analytic}).
  \item Reconstruct each such~$A'$ as the Jacobian of a genus~$2$ curve
    over~$\Q$ by applying Mestre's algorithm and identifying the correct twist
    (see \S\ref{sec:curves}).
  \item Repeat this process on all the newly obtained abelian surfaces as needed.
  \end{enumerate}
\end{algorithm}

\section{Rational \texorpdfstring{$\ell$}{ell}-torsion subgroups}
\label{sec:subgroups}

Let~$A$ be a typical p.p.~abelian surface over~$\Q$, and let~$N$ be its
conductor (see \cite[\S 6]{BrumerKamer} for the definition of the conductor and
further background).  Let $d$ be the maximal integer such that~$d^2 | N$.  Let
$S$ denote the set of primes of bad reduction for~$A$, i.e.~the set of primes
dividing $N$.

For each prime~$\ell \geq 2$,
let~$\rho_{\ell} \colon \Gal(\Qbar/\Q) \rightarrow \GSp(4,\Z_{\ell})$ denote
the Galois representation on the~$\ell$-adic Tate module
\[
  T_{\ell}(A) = \varprojlim\limits_{n \geq 1}A[\ell^n] \simeq \Z_\ell^4,
\]
where we fix a symplectic basis of~$T_\ell(A)$ for this last isomorphism. The
N\'eron--Ogg--Shafarevich criterion~\cite{SerreTate} states that~$\rho_{\ell}$
is unramified away from~$\ell$ and~$S$; in other words,
if~$p \notin S \cup \{\ell\}$ is a prime, then the inertia group~$I_p$ at~$p$
has trivial image under~$\rho$. The prime-to-$\ell$ part of the conductor
of~$\rho_\ell$ equals~$N$ when~$\ell$ is a prime of good reduction~\cite[Exposé
IX, \S 4]{SGA}.  For each prime $p$ of good reduction, we let
$Q_p(x) \colonequals x^4 - a_p x^3 + b_p x^2 - pa_p x + p^2 \in \Z[x]$ denote
the characteristic polynomial of~$\rho_{\ell}(\Frob_p)$, which is independent
of $\ell \neq p$. The complex roots of~$Q_p(x)$ all have absolute
value~$\sqrt{p}$.

Considering all the~$\ell$-adic representations at the same time, we obtain the
adelic Galois
representation~$\rho \colon \Gal(\Qbar/\Q) \rightarrow \GSp(4,\Zhat)$ attached
to~$A$.  Serre's open image theorem~\cite{SerreOeuvres} asserts that the image
of~$\rho$ is an open subgroup of~$\GSp(4,\smash{\Zhat})$, or equivalently has
finite index in it.  Consequently, the mod-$\ell$ Galois
representation~$\rhobar_{\ell} \coloneqq \rho_{\ell} \bmod{\ell}$, attached to
the Galois action on~$A[\ell]$, is surjective for all primes~$\ell$ outside a
finite set.

Dieulefait~\cite{Dieulefait} describes an algorithm to explicitly determine a
finite set of primes containing all primes~$\ell$ at which $\rhobar_{\ell}$ is
not surjective, using the classification of maximal subgroups
of~$\GSp(4,\F_{\ell})$.  For each type of maximal subgroup, one can give
necessary conditions on $\ell$ for the image of~$\rhobar_{\ell}$ to be
contained in a subgroup of this type, and these conditions are satisfied by
finitely many primes~$\ell$.  The algorithm has been implemented in
SageMath~\cite{Sage} by the work of~\cite{galreps}.

In this work, we are only interested in two types of maximal subgroups, namely
the stabilizers of lines and two-dimensional isotropic subspaces in
~$\F_\ell^4$~\cite[Lemma 2.3(1)]{galreps}. Keeping notation from~\S
\ref{sub:isog-class}, we call~$\calL_1$ (resp.~$\calL_2$) the finite set of
primes~$\ell$ for which the image of~$\rhobar_\ell$ stabilizes a line (resp.~a
2-dimensional isotropic subspace).  A 1-step $\ell$-isogeny with domain~$A$
exists over~$\Q$ if and only if $\ell\in \calL_2$.  Moreover, if a 2-step
$\ell$-isogeny with domain~$A$ exists over~$\Q$, then~$\ell\in \calL_1$ as per
\cref{rem:2step_isogeny_kernel}.  We now describe the associated Dieulefait
criteria~\cite[\S3.1,\,\S3.2]{Dieulefait} in more detail, assuming from now on
that~$A$ has good reduction at~$\ell$.

\subsection{Computing a finite superset of \texorpdfstring{$\calL_1$}{L1}}
\label{sec:computing L1}

Dieulefait's tests rely on the factorization of characteristic
polynomials~$Q_p(x)$ over finite fields, for a given list of primes~$p$. We
include this list as part of the input of the following algorithm.

\begin{algorithm}\label{alg:L1}
  {\em Input:}
  \begin{itemize}
  \item a genus 2 curve $C$ over $\Q$ such that $A = \Jac(C)$,
  \item the conductor $N$ of $A$,
  \item and a non-empty finite set $P$ of primes of good reduction for $C$.
  \end{itemize}
  {\em Output:} a finite superset of $\calL_1$.
  \begin{enumerate}[label={\bf Step \arabic*.},labelwidth=\widthof{{\bf Step 2.}},leftmargin=!]
  \item Compute~the maximal integer $d$ such that $d^2 \mid N$.
  \item Compute~$Q_p\in \Z[x]$ for $p \in P$
    by computing the number of points on~$C$ over~$\F_p$ and~$\F_{p^2}$.
  \item Compute~$M = \gcd_{p\in P} \Res\paren[\big]{Q_p(x), x^{f(p)}-1}$, where
    $f(p)$ is the order of~$p$ in~$(\Z/d\Z)^{\times}$.
  \item Return the list of prime divisors of~$M$.
  \end{enumerate}
\end{algorithm}

\begin{prop}
\cref{alg:L1} returns a finite superset of $\calL_1$.
\end{prop}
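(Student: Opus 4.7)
The plan is to show that every $\ell \in \calL_1$ at which $A$ has good reduction divides $M$, so that $\ell$ appears among the prime divisors of $M$ in the output; the finitely many primes in $S$ (of bad reduction) can be adjoined separately to obtain a superset of $\calL_1$ without affecting finiteness. Finiteness of the output itself reduces to showing that $M \neq 0$, and since each $Q_p$ has roots of absolute value $\sqrt{p}$ and hence no roots of unity among them, every individual $\Res(Q_p(x), x^{f(p)} - 1)$ is a nonzero integer, giving $M \neq 0$.

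Fix $\ell \in \calL_1$ with $A$ of good reduction at $\ell$, and let $L \subset A[\ell]$ be a Galois-stable line with character $\chi \colon \Gal(\Qbar/\Q) \to \F_\ell^\times$. The Weil pairing identifies $A[\ell]$ with $\operatorname{Hom}(A[\ell], \mu_\ell)$, i.e., yields an isomorphism $\rhobar_\ell \cong \rhobar_\ell^{\vee} \otimes \omega$, where $\omega$ denotes the mod-$\ell$ cyclotomic character. Dualizing the inclusion $L \hookrightarrow A[\ell]$ and applying this isomorphism exhibits $\omega\chi^{-1}$ as a one-dimensional quotient, hence a Jordan-H\"older factor, of $\rhobar_\ell$. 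Consequently both $\chi(\Frob_p)$ and $p\,\chi(\Frob_p)^{-1}$ are roots of $Q_p(x) \bmod \ell$ for each $p \in P$.

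The crucial step is to show that one of the two characters $\chi$ or $\omega\chi^{-1}$, call it $\psi$, is a Dirichlet character of conductor dividing $d$. For a prime $p \in S \setminus \{\ell\}$, the local Artin conductor of $\rhobar_\ell$ at $p$ is bounded by $v_p(N)$ and is additive on Jordan-H\"older factors; since $\omega$ is unramified at $p \neq \ell$, the characters $\chi$ and $\omega\chi^{-1}$ share the same local conductor exponent $c_p$, and these two composition factors together contribute at least $2c_p$ to $v_p(N)$, forcing $c_p \leq \lfloor v_p(N)/2 \rfloor = v_p(d)$. At $\ell$, good reduction implies $A[\ell]$ extends to a finite flat group scheme over $\Z_\ell$, and Raynaud's classification then shows that the restriction to $I_\ell$ of any one-dimensional composition factor of $\rhobar_\ell$ is a power $\omega^j$ with $j \in \{0, 1\}$. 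Hence $\chi|_{I_\ell}$ is either trivial or equals $\omega|_{I_\ell}$, so at least one of $\chi, \omega\chi^{-1}$ is unramified at $\ell$: this is our $\psi$.

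The conclusion is then immediate: for each $p \in P$ coprime to $d$, the Frobenius $\Frob_p$ maps to $p \bmod d$ in $(\Z/d\Z)^{\times} \simeq \Gal(\Q(\zeta_d)/\Q)$, an element of order $f(p)$; hence $\psi(\Frob_p)^{f(p)} = 1$ in $\F_\ell$. Since $\psi(\Frob_p)$ is a root of $Q_p(x) \bmod \ell$, the resultant $\Res(Q_p(x), x^{f(p)} - 1)$ vanishes modulo $\ell$, and taking the gcd over $p \in P$ yields $\ell \mid M$. The main obstacle will be the conductor bound at bad primes, which requires the additivity of Artin conductors along Jordan-H\"older filtrations together with the comparison between the conductor of the abelian variety $A$ and the prime-to-$\ell$ conductor of $\rho_\ell$.
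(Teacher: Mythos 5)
Your proof is correct and follows essentially the same route as the paper's: use the Weil pairing to produce the second character $\omega\chi^{-1}$, invoke Raynaud/finite flatness at $\ell$ to pick the unramified one $\psi$ (the paper's $\varepsilon$), bound $\cond(\psi)$ by $d$ via the conductor of the two composition factors, and conclude with the resultant divisibility and the archimedean argument for $M\neq 0$. Your explicit treatment of the conductor additivity at bad primes and of the bad-reduction primes in $S$ only spells out details the paper handles by a standing good-reduction assumption.
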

\begin{proof}
  Suppose that there exists a~$1$-dimensional subrepresentation~$(\pi,V)$
  of~$\rhobar_{\ell}$, induced by a stable line~$V\subset A[\ell]$.
  Let~$V^{\perp}$ denote the subgroup of~$A[\ell]$ that pairs trivially
  with~$V$ under the Weil pairing. By Galois equivariance of the Weil pairing,
  the 1-dimensional quotient representation on~$A[\ell]/V^{\perp}$ is given
  by~$\pi^{-1}\chi_{\ell}$, where~$\chi_{\ell}$ is the mod-$\ell$ cyclotomic
  character. By results of Raynaud~\cite[Cor.\ 3.4.4]{Raynaud} and
  Serre~\cite[\S 1.9]{Serre72}, we know that~$\pi$ restricted to the inertia
  group~$I_{\ell}$ at~$\ell$ is either trivial or equal to~$\chi_{\ell}$.

  In any case, there exists a character~$\varepsilon$, unramified away
  from~$N$, such that the semisimplification~$\rhobar_{\ell}^{ss}$ admits
  both~$\varepsilon$ and~$\varepsilon^{-1}\chi_{\ell}$ as direct summands.
  Hence the conductor of~$\rhobar_{\ell}$ is divisible by the square of the
  conductor of~$\varepsilon$, in other words~$\cond(\varepsilon)$ divides~$d$.
  Class field theory then implies that~$\varepsilon$ is a character
  of~$\Gal(\Q(\zeta_d)/\Q) \simeq (\Z/d\Z)^{\times}$. For each prime~$p \in L$,
  since $f(p)$ is the order of~$p$ in~$(\Z/d\Z)^{\times}$, we have
  $\varepsilon(\Frob_p)^{f(p)} = 1$. Therefore,~$\varepsilon(\Frob_p)$ is a
  root of the characteristic polynomial of~$\rhobar_{\ell}(\Frob_p)$ and also
  of the polynomial~$x^{f(p)}-1$ over~$\F_{\ell}$. Thus $\ell$ divides
  $\Res(Q_p(x), x^{f(p)}-1)$ and hence $\ell$ divides $M$.

  Since the complex roots of $Q_p(x)$ have absolute value $\sqrt{p}$, they are
  distinct from the roots of $x^{f(p)} - 1$ which have absolute value $1$. So
  the resultants and thus $M$ are guaranteed to be non-zero and the computed
  superset is finite.
\end{proof}

\begin{remark}
  \label{rem:120}
  Building on results of Grothendieck~\cite[Exposé IX, Prop.\ 3.5]{SGA} and
  Larson--Vaintrob~\cite[Thm.\ 7.2]{LarsonVaintrob}, the authors
  of~\cite{galreps} introduce the following strengthening of the above
  technique (Alg.~3.3). For~$p\in P$, let~$r =
  \gcd(f(p),120)$. Let~$R_p$ be the polynomial whose roots are the $r$-th
  powers of the roots of~$Q_p$. Then each~$\ell\in \calL_1$ must
  divide~$p\, R_p(1)$. The original criterion by Dieulefait, as presented
  above, was however sufficient for our purposes.
\end{remark}

\subsection{Computing a finite superset of \texorpdfstring{$\calL_2$}{L2}}
\label{sec:computing L2}

In the case of~$\calL_2$, the computation of a finite superset might fail if
the list of auxiliary primes~$p$ is too small, leading to the following
algorithm.

\begin{algorithm}
  \label{alg:L2}
  {\em Input:}
  \begin{itemize}
  \item a genus 2 curve $C$ over $\Q$ such that $A = \Jac(C)$,
  \item the conductor $N$ of $A$,
  \item and a non-empty finite set $P$ of primes of good reduction for $C$.
  \end{itemize}
  {\em Output:} a finite superset of $\calL_2$, or \texttt{false}.
  \begin{enumerate}[label={\bf Step \arabic*.},labelwidth=\widthof{{\bf Step 2.}},leftmargin=!]
  \item Compute~$d$.
  \item For $p \in P$, compute $Q_p = x^4 - a_p x^3 + b_p x^2 - pa_p x + p^2 \in \Z[x]$.
  \item For $p \in P$, compute the polynomials
  \begin{align*}
  R_{1,p}(x) &\coloneqq (b_p x -1-p^2x^2)(px+1)^2 - a_p^2 p x^2 \quad \text{and}\\
  R_{2,p}(x) &\coloneqq (b_p x -p -px^2)(x+1)^2-a_p^2x^2.
  \end{align*}
\item For~$i=1$ and~$2$, compute
  $M_i = \gcd_{p\in P} \Res\paren[\big]{R_{i,p}(x), x^{f(p)}-1}$, where $f(p)$
  is the order of~$p$ in~$(\Z/d\Z)^{\times}$. Let~$M=M_1M_2$.
\item If $M$ is nonzero, return the list of prime divisors of $M$, else
  return \texttt{false}.
  \end{enumerate}
\end{algorithm}

\begin{prop}
    \cref{alg:L2} returns either\, {\tt false} or a finite superset of $\calL_2$.
\end{prop}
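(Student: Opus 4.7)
The plan is to mimic the proof for \cref{alg:L1}, but with a $2$-dimensional subrepresentation of $\rhobar_\ell$ replacing the character. Suppose $\ell \in \calL_2$, so $A[\ell]$ contains a Galois-stable isotropic $2$-dimensional subspace $W$; let $\sigma \colon \Gal(\Qbar/\Q) \to \mathrm{GL}_2(\F_\ell)$ be the associated subrepresentation. Since $W$ is maximal isotropic under the Weil pairing, $\rhobar_\ell^{\mathrm{ss}} \cong \sigma \oplus (\sigma^\vee \otimes \chi_\ell)$, and the conductor comparison from the $\calL_1$ case shows that $\cond(\det\sigma)$ divides~$d$ away from~$\ell$. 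By Raynaud and Serre, the weights of $A[\ell]|_{I_\ell}$ lie in $\{0,1\}$, so $\det\sigma|_{I_\ell} = \chi_\ell^e$ for some $e \in \{0, 1, 2\}$ (including the case where $\sigma|_{I_\ell}$ is given by level-$2$ fundamental characters, which forces $e=1$). The twisted character $\varepsilon \coloneqq \det\sigma \cdot \chi_\ell^{-e}$ is thus unramified at~$\ell$ of conductor dividing~$d$, hence factors through $(\Z/d\Z)^\times$ and satisfies $\varepsilon(\Frob_p)^{f(p)} = 1$ for every $p \in P$.

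Next I translate this into a Frobenius constraint. For each $p \in P$, label the roots of $Q_p$ in $\Fbar_\ell$ as $\alpha_1, \alpha_2, \alpha_3, \alpha_4$ with $\alpha_1\alpha_3 = \alpha_2\alpha_4 = p$ under Weil duality. The eigenvalues of $\sigma(\Frob_p)$ form an isotropic pair, so $\det\sigma(\Frob_p) \equiv y \pmod{\ell}$ for some
\[
  y \in Y_p \coloneqq \{\alpha_1\alpha_2,\ \alpha_1\alpha_4,\ \alpha_2\alpha_3,\ \alpha_3\alpha_4\}.
\]
Writing $u = \alpha\beta$, $s = \alpha+\beta$ and eliminating $s$ from the elementary symmetric relations on $\alpha,\beta,p/\alpha,p/\beta$ as roots of $Q_p$, one checks that every $y \in Y_p$ is a solution of
\[
  (b_p y - y^2 - p^2)(y + p)^2 = a_p^2\, p\, y^2.
\]
Substituting $y = 1/x$ and clearing denominators recovers $R_{1,p}(x)$, while substituting $y = p x$ recovers $R_{2,p}(x)$. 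Comparing degrees, the four roots of $R_{1,p}$ are exactly $\{1/y : y \in Y_p\}$ and the four roots of $R_{2,p}$ are exactly $\{y/p : y \in Y_p\}$.

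The constraint $\varepsilon(\Frob_p)^{f(p)} = 1$ becomes $y^{f(p)} \equiv p^{e f(p)} \pmod{\ell}$. If $e=0$, then $1/y$ is a common root of $R_{1,p}$ and $x^{f(p)} - 1$ modulo~$\ell$, so $\ell \mid \Res(R_{1,p}, x^{f(p)} - 1)$. If $e=2$, the involution $y \leftrightarrow p^2/y$ preserves $Y_p$ and sends the condition to the $e=0$ case for the dual element, again giving divisibility by the first resultant. If $e=1$, then $y/p$ is a common root of $R_{2,p}$ and $x^{f(p)} - 1$, yielding $\ell \mid \Res(R_{2,p}, x^{f(p)} - 1)$. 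Taking gcds over $p \in P$ and multiplying, $\ell \mid M_1 M_2 = M$ in every case. Thus whenever $M \neq 0$ its prime divisors form a finite superset of $\calL_2$; otherwise the algorithm correctly returns \texttt{false}. The main technical step is the root calculation identifying $R_{1,p}$ and $R_{2,p}$ with the polynomials vanishing on $1/Y_p$ and $Y_p/p$; the rest is a careful rerun of the $\calL_1$ argument together with the $y \leftrightarrow p^2/y$ symmetry used to absorb the $e=2$ case into the $e=0$ one.
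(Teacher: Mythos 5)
Your proof is correct and follows essentially the same route as the paper: the decomposition $\rhobar_\ell^{ss}\cong\pi\oplus\bigl(\chi_\ell\otimes(\pi^{-1})^t\bigr)$, the Raynaud--Serre constraint on $\det\pi|_{I_\ell}$, the conductor bound $\cond(\varepsilon)\mid d$, and the resultant criterion. The only difference is presentational: you verify via symmetric functions that $R_{1,p}$ and $R_{2,p}$ vanish on $1/y$ and $y/p$ for $y$ the product of a non-dual pair of Frobenius eigenvalues, and treat the exponent $e=0$ explicitly through the involution $y\mapsto p^2/y$, whereas the paper writes down the two factorization shapes of $Q_p$ and eliminates the trace $r$ (absorbing $e=0$ into the $\det\pi=\varepsilon\chi_\ell^2$ case by symmetry of the two factors) --- the same elimination in different clothing.
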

\begin{proof}
  Suppose that~$\rhobar_{\ell}$ admits a $2$-dimensional isotropic
  subrepresentation denoted by~$(\pi,V)$.  The quotient representation on~$A[\ell]/V$ is
  given by~$\chi_{\ell} \otimes (\pi^{-1})^t$. By~\cite[Cor.\ 3.4.4]{Raynaud}
  and~\cite[\S 1.9]{Serre72}, we obtain as above that
  either~$\det(\pi) = \varepsilon\chi_{\ell}^2$
  or~$\det(\pi) = \varepsilon\chi_{\ell}$ for some character~$\varepsilon$
  unramified away from~$N$.  The direct sum
  decomposition~$\rhobar_{\ell}^{ss} \simeq \pi \oplus \chi_{\ell} \otimes
  (\pi^{-1})^t$ then implies that~$\cond(\varepsilon)$ still divides~$d$.

  If $\det(\pi) = \varepsilon\chi_{\ell}^2$, then for every prime $p$ of good
  reduction, we have the following factorization of~$Q_p(x)$ into a product of
  two related quadratic polynomials modulo~$\ell$:
  \[
    Q_p(x) = \paren[\big]{x^2-rx+p^2\varepsilon(p)}
    \paren[\Big]{x^2-\frac{r}{p\varepsilon(p)}x+\varepsilon^{-1}(p)}.
  \]
  If $\det(\pi) = \varepsilon\chi_{\ell}$, then for all such $p$, we have
  \[
    Q_p(x) = \paren[\big]{x^2-rx+p\varepsilon(p)}
    \paren[\Big]{x^2-\frac{r}{\varepsilon(p)}x+p\varepsilon^{-1}(p)}.
  \]

  By comparing coefficients and eliminating~$r$, one observes that the first
  kind of factorization happens if and only if $\varepsilon(p)$ is a root of
  the integral polynomial~$R_{1,p}(x)$ introduced in \Cref{alg:L2}
  over~$\F_\ell$\ for all $p$.  Similarly, the second kind of factorization
  happens if and only if $\varepsilon(p)$ is a root of~$R_{2,p}(x)$
  over~$\F_{\ell}$\ for all $p$.  Thus, there is an $i\in\{1,2\}$ such that for
  all $p$ of good reduction,
  $\Res\paren[\big]{R_{i,p}(x),x^{f(p)}-1} = 0 \mod{\ell}$. We deduce that
  $\ell$ always divides $M=M_1M_2$.
\end{proof}

We now show that \cref{alg:L2} returns a superset of $\calL_2$ provided
that~$P$ contains enough primes.  In practice, failures are not an issue even
for a small list~$P$.
\begin{prop} \label{prop:finiteset_reducible2} For $B$ large enough and
  $P = \{p \leq B \colon p \textup{ is a good prime for }C\}$, \cref{alg:L2}
  returns a finite list of primes, in other words both~$M_1$ and~$M_2$ are
  nonzero.
\end{prop}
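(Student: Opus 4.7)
The plan is to split the two conditions $M_1\ne 0$ and $M_2\ne 0$. The first follows from an archimedean bound on the complex roots of $R_{1,p}$, while the second requires Serre's open image theorem combined with Chebotarev density.

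For $M_1$: inverting the elimination that defines $R_{1,p}$, any complex root $\varepsilon$ corresponds to a factorization $Q_p(x)=(x^2-rx+p^2\varepsilon)(x^2-sx+\varepsilon^{-1})$ over~$\C$, so $p^2\varepsilon=\alpha\beta$ for two of the roots $\alpha,\beta$ of~$Q_p$. The Weil bound $|\alpha|=|\beta|=\sqrt p$ yields $|\varepsilon|=1/p<1$, so $\varepsilon$ is never a root of unity; hence $\Res(R_{1,p},x^{f(p)}-1)\ne 0$ for every good prime, and $M_1\ne 0$ as soon as $P$ contains any good prime.

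For~$R_{2,p}$, the analogous derivation gives complex roots of the form $\alpha\beta/p$, all of absolute value exactly~$1$, so the archimedean argument fails. Since $f(p)$ divides $\varphi(d)$ for every good prime, it is enough to exhibit one good prime~$p$ with $\Res\bigl(R_{2,p}(x),\,x^{\varphi(d)}-1\bigr)\ne 0$. To produce such a prime, set $n=\varphi(d)$ and, using Serre's open image theorem, fix an auxiliary prime~$\ell$ coprime to~$n$ and to the bad primes of~$A$, such that $\rhobar_\ell$ surjects onto $\GSp(4,\F_\ell)$. Define $R_{2,M}(x)\in\F_\ell[x]$ for each $M\in\GSp(4,\F_\ell)$ by the formula for $R_{2,p}$ applied to the characteristic polynomial coefficients and similitude factor of~$M$. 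For each fixed $\zeta\in\overline{\F}_\ell$ with $\zeta^n=1$, the equation $R_{2,M}(\zeta)=0$ is a nontrivial quadratic in the trace of~$M$ (the coefficient of $a_M^2$ equals $-\zeta^2\ne 0$), and so carves out a proper Zariski-closed subvariety of~$\GSp_4$. The complement $U$ of the union of these $n$ subvarieties is Zariski-open and dense; for $\ell$ large, $U(\F_\ell)\ne\emptyset$ by Lang--Weil. Chebotarev density then gives a positive density of primes~$p$ with $\rhobar_\ell(\Frob_p)\in U$, and for any such~$p$ the resultant $\Res(R_{2,p},x^n-1)$ reduces to a nonzero element modulo~$\ell$ and is therefore nonzero in~$\Z$.

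The principal obstacle is verifying the nonemptiness of $U(\F_\ell)$, i.e.\ that the finitely many conditions $R_{2,M}(\zeta)=0$ do not jointly fill $\GSp(4,\F_\ell)$. This is the dimension count sketched above: each such condition is a single nontrivial polynomial relation on $(a_M,b_M,p_M)$, so their union has positive codimension in $\GSp_4$ and leaves many $\F_\ell$-points for $\ell$ large. Once this is established, Chebotarev and Serre's theorem produce the required prime~$p$, and $M_2\ne 0$ as soon as $B$ is large enough that $P$ contains it.
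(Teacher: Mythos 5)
Your proof is correct. For $M_1$ you take a genuinely different route from the paper: the paper restricts to primes $p\equiv 1\pmod d$, where the resultant reduces to $R_{1,p}(1)$, and uses the Weil bounds $\abs{a_p}\leq 4\sqrt p$, $\abs{b_p}\leq 6p$ to force $R_{1,p}(1)<0$ for $p$ large; you instead invert the elimination to see that every complex root of $R_{1,p}$ is $\alpha\beta/p^2$ for two Frobenius eigenvalues, hence has absolute value $1/p<1$ and is never a root of unity. Your version is slightly stronger (the resultant is nonzero for \emph{every} good prime, so $M_1\neq 0$ for any nonempty $P$, with no largeness or congruence condition); the only point to flag is the degenerate case $p\varepsilon+1=0$, where the linear system for $r$ is singular, but there $\varepsilon=-1/p$ and the conclusion $\abs{\varepsilon}=1/p$ holds anyway. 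For $M_2$ you follow the same strategy as the paper — Serre's open image theorem plus Chebotarev for an auxiliary surjective $\rhobar_\ell$ — but you make explicit the step the paper only asserts, namely that the ``bad'' characteristic polynomials do not exhaust $\GSp(4,\F_\ell)$ for large $\ell$: observing that $R_{2,M}(\zeta)=0$ is a nontrivial quadratic in the trace (leading coefficient $-\zeta^2\neq 0$), so that the bad locus is a proper closed conjugation-invariant subvariety of degree bounded independently of $\ell$ whose complement has $\F_\ell$-points by Lang--Weil, is a clean way to justify that assertion. Two small bookkeeping points worth recording: reduction modulo $\ell$ commutes with the resultant here because the leading coefficient $-p$ of $R_{2,p}$ and the monic $x^{f(p)}-1$ do not drop degree mod $\ell$ (so nonvanishing over $\F_\ell$ really does lift to $\Z$), and passing from $x^{f(p)}-1$ to $x^{\varphi(d)}-1$ is legitimate because the former divides the latter, so its resultant with $R_{2,p}$ divides the one you control.
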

\begin{proof} We first prove that $M_1\neq 0$.  For~$p = 1 \bmod{d}$, we have
  \[
  \Res(R_{1,p}(x), x^{f(p)}-1) = R_{1,p}(1) = (b_p-1-p^2)(p+1)^2-a_p^2p.
  \]
  Using the bounds~$\abs{a_p} \leq 4\sqrt{p}$ and $\abs{b_p} \leq 6p$ coming
  from the fact that the roots of~$Q_p(x)$ have absolute value~$\sqrt{p}$, it
  follows that $R_{1,p}(1) < 0$ if~$p$ is large enough, and thus $M_1 \neq 0$
  for large~$B$.

  Second, we prove that~$M_2 \neq 0$. Consider all polynomials over~$\F_\ell$
  which, over~$\Fbar_\ell$, admit a factorization of the form
  $$
  (x^2-rx+p\eta) \paren{x^2-\tfrac{r}{\eta}x + p\eta^{-1}},
  $$
  where~$\eta\in\Fbar_\ell$ is a $d$-th root of unity and~$r\in
  \Fbar_\ell$. When~$\ell$ is sufficiently large, these polynomials do not
  account for all characteristic polynomials of matrices
  in~$\GSp(4,\F_\ell)$. By Serre's open image theorem, we can further assume
  that the representation~$\rhobar_\ell$ is surjective. For such an~$\ell$, by
  the Chebotarev density theorem, there must exist infinitely many~$p$ such
  that~$Q_p(x)$ does not factor over~$\Fbar_\ell$ in the above shape, and thus
  $\ell\nmid\Res(R_{2,p}(x), x^{f(p)}-1)$.  Therefore $M_2 \neq 0$ if~$B$ is
  sufficiently large.
\end{proof}

\begin{remark}
  Showing that~$M_2\neq 0$ for a sufficiently large~$B$ is non-trivial: indeed,
  if~the reduction of $A$ modulo $p$ is isogenous to the square of an elliptic
  curve over~$\F_p$, then $\Res(R_{2,p}(x), x^{f(p)}-1)=0$.
\end{remark}

\begin{remark}
  An improvement of a similar flavor to \Cref{rem:120} is also available here:
  see \cite[\S 3.1.3]{galreps}.
\end{remark}

\subsection{Other tests for irreducibility} The output of \Cref{alg:L1,alg:L2}
usually consists of very short lists of primes, but might still contain
extraneous primes~$\ell\notin \calL_1\cup\calL_2$. In order to further weed out
some of these primes, one can compute~$Q_p(x)$ for a larger set of primes~$p$,
and eliminate any prime~$\ell$ with the property that one of these polynomials
is irreducible modulo~$\ell$.  In our computations, we considered all
primes~$p\leq 500$ of good reduction for~$C$.

\section{Invariants of isogenous abelian surfaces}
\label{sec:analytic}

In this section, we describe an efficient algorithm solving the following
problem: given a typical p.p.~abelian surface $A$ over~$\Q$ and a prime
number~$\ell$, compute the complete list of p.p.~abelian surfaces~$A'$
over~$\Q$ such that~$A$ and~$A'$ are linked by a rational 1-~or 2-step
$\ell$-isogeny.

Devising a polynomial-time algorithm for this task is
straightforward: we can write down equations for the torsion
subgroups~$A[\ell]$ or~$A[\ell^2]$, look for rational subgroups of the correct
shape by factoring these polynomials over~$\Q$, and apply algorithms to
compute quotients of p.p.~abelian surfaces by isotropic
subgroups~\cite{CouveignesEzome, LubiczRobert}. Such an algorithm would however
be hopelessly slow in practice.

A more efficient approach is to use modular equations for p.p.~abelian
surfaces, which are higher-dimensional analogues of elliptic modular polynomials:
see~\cite{BroekerLauter} for their definition in the case of 1-step isogenies,
and~\cite{Milio} for an efficient algorithm to compute them.  Evaluating
modular equations at~$A$ provides tuples of modular invariants (for instance
Igusa--Clebsch invariants) of abelian surfaces isogenous to~$A$, possibly
defined over a number field. This evaluation can be done within a reasonable
complexity, namely~$\Otilde(\ell^6h)$ bit operations\footnote{We use the
  notation $\Otilde(N)$ to denote~$O\paren[\big]{N\log^k(N)}$ for some value
  of~$k$.} in the case of 1-step $\ell$-isogenies, where~$h$ is the height of
the invariants of~$A$~\cite{KiefferEvaluation}. This algorithm works by
computing the invariants of isogenous abelian surfaces as complex numbers,
packaging them into a polynomial, and recognizing its coefficients as rational
numbers. Then a rational isogeny from~$A$ exists exactly when this polynomial
has a rational root (see \cref{prop:ratl-invariants} below).

Here we take this method one step further: we compute these invariants
over~$\C$, and directly recognize when they are attached to a p.p.~abelian
surface defined over~$\Q$. In a sense, we detect rational roots of modular
equations without computing the number fields that other roots generate. When
doing so, the cost is further lowered to~$\Otilde\paren[\big]{(n+1)\ell^d h}$
bit operations, where~$d=3$ (resp.~$4$) in the case of 1-step (resp.~2-step)
isogenies, and~$n$ is the number of roots ``close to'' being rational, in a
precise sense explained below. (We usually have~$n=0$, sometimes $n=1$, and
rarely more.) Crucially, this analytic method allows for certification. The
modular invariants we compute provably correspond to p.p.~abelian surfaces
rationally isogenous to~$A$, and provably miss none of them.

The computations over~$\C$ make use of structure theorems for Siegel modular
forms in dimension~$2$ and explicit formulas for Hecke correspondences,
recalled in~\S\ref{sub:siegel-mf} and~\S\ref{sub:hecke}.  Next, we explain why
detecting rational isogenies reduces to detecting invariants defined
over~$\Q$~(\S\ref{sub:ratl-invariants}), and how to compute these invariants as
algebraic integers, which is the crucial idea behind
certification~(\S\ref{sub:certification}). Finally, in~\S\ref{sub:outline}
and~\S\ref{sub:implementation}, we describe the algorithm and sketch its
complexity analysis.

\subsection{Siegel modular forms}
\label{sub:siegel-mf}

Over~$\C$, every p.p.~abelian surface~$A$ can be written as a complex
torus~$\C^2/(\Z^2\oplus\tau\Z^2)$, where~$\tau$ belongs to the Siegel upper
half space~$\Half_2$, consisting of complex~$2\times 2$ symmetric matrices with
positive definite imaginary part. Such a~$\tau$ is called a (small)
\emph{period matrix} of~$A$. The group~$\GSp(4,\R)^+$ consisting of general
symplectic matrices with positive similitude factor acts on~$\Half_2$ as
follows:
\begin{displaymath}
  \gamma\tau = (a\tau+b)(c\tau+d)^{-1}, \quad \text{where }
  \gamma = \left(\begin{matrix} a&b\\c&d \end{matrix}\right)
  \text{ in $2\times 2$ blocks.}
\end{displaymath}
For later use, we also write
\begin{displaymath}
  \gamma^*\tau = c\tau + d.
\end{displaymath}
The period matrix of~$A$ is unique up to the action of the modular
group~$\Sp(4,\Z)$, so the quotient~$\Sp(4,\Z)\backslash\Half_2 $ is precisely
the coarse moduli space of complex p.p.~abelian surfaces.  In fact, this coarse
moduli space~$\A_2$ exists as a quasi-projective
variety defined over $\Q$, and $\A_2(\C)$ can be identified with
$\Sp(4,\Z)\backslash\Half_2 $.

A (scalar-valued) \emph{Siegel modular form} on~$\Half_2$ of
weight~$k\in \Z_{\geq 0}$ (and level 1) is a complex-analytic map
$f \colon \Half_2\to \C$ satisfying
$f(\gamma\tau) = \det(\gamma^*\tau)^k f(\tau)$ for all~$\tau\in \Half_2$
and~$\gamma\in\Sp(4,\Z)$. See~\cite{vanderGeer} for more background on
these objects. Siegel modular forms admit Fourier
expansions: writing~$\tau\in \Half_2$ as
\begin{displaymath}
  \tau = \left(
  \begin{matrix}
    \tau_1&\tau_3\\\tau_3&\tau_2
  \end{matrix}\right)
\end{displaymath}
and~$q_j = \exp(2\pi i \tau_j)$ for~$1\leq j\leq 3$, the Fourier expansion of a
Siegel modular form belongs to the power series
ring~$\C[q_3,q_3^{-1}][[q_1,q_2]]$.

By the Baily--Borel theorem, Siegel modular forms with rational Fourier
coefficients yield projective embeddings of~$\A_2$ that are defined over~$\Q$,
and can thus be used as rational coordinates on this moduli
space. Igusa~\cite{Igusa1} proved the following fundamental theorem.

\begin{thm}
  \label{thm:mf-gens}
  The graded~$\C$-algebra of Siegel modular forms on~$\Half_2$ of even weight
  is free with four generators~$M_4,M_6,M_{10},M_{12}$ of weights~$4,6,10,12$
  with integral Fourier coefficients.
\end{thm}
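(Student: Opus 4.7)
The plan is to follow Igusa's classical proof, which proceeds in three steps: explicit construction of the four generators with integral Fourier expansions, verification of algebraic independence, and the proof that they generate the whole graded algebra.

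First, I would construct the generators explicitly. Define $E_4, E_6$ on $\Half_2$ as the genus~$2$ Siegel Eisenstein series of weights $4$ and $6$, summed over $\Gamma_\infty\backslash \Sp(4,\Z)$. After dividing by suitable rational constants coming from values of the Riemann zeta function, one obtains forms $M_4, M_6$ whose Fourier expansions lie in $\Z[q_3,q_3^{-1}][[q_1,q_2]]$ with content $1$. Next, define $M_{10}$ (up to a normalizing factor) as the product of the squares of the ten even genus~$2$ theta constants $\theta_{[a,b]}(\tau)^2$, and $M_{12}$ as the symmetrization $\sum \prod_{\text{syzygous}} \theta_{[a,b]}^4$ over an appropriate collection of sextets of even characteristics. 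Integrality of Fourier coefficients then follows from integrality of the theta series, and one checks by a direct computation of the leading Fourier--Jacobi coefficient that $M_{10}$ and $M_{12}$ are cusp forms.

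Second, I would verify algebraic independence. Suppose $P(M_4, M_6, M_{10}, M_{12}) = 0$ is a nontrivial homogeneous relation. Applying the Siegel $\Phi$-operator $\Phi\colon M_k(\Sp(4,\Z)) \to M_k(\SL(2,\Z))$, which sends $M_{10}$ and $M_{12}$ to $0$ and sends $M_4, M_6$ to nonzero multiples of the genus~$1$ Eisenstein series $E_4^{(1)}$ and $E_6^{(1)}$, we would obtain an algebraic relation between $E_4^{(1)}$ and $E_6^{(1)}$, contradicting the well-known freeness of $M_*(\SL(2,\Z)) = \C[E_4^{(1)},E_6^{(1)}]$. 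Iterating this by looking at the Fourier--Jacobi expansion shows that no monomial purely in $M_{10}, M_{12}$ of positive degree can appear alone either, giving full algebraic independence.

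Third, I would show generation, which is the main obstacle. Given an arbitrary even-weight modular form $f \in M_k(\Sp(4,\Z))$, one uses the known structure of $M_*(\SL(2,\Z))$ to subtract a polynomial in $M_4, M_6$ so that $g := f - Q(M_4,M_6)$ lies in the kernel of $\Phi$. What remains is to establish the equality
\[
\ker(\Phi) \;=\; M_{10}\cdot \C[M_4,M_6,M_{10},M_{12}] \;+\; M_{12}\cdot \C[M_4,M_6,M_{10},M_{12}].
\]
This is the hard step and is not formal: one proceeds by induction on the weight, expanding a cusp form in its Fourier--Jacobi series and showing that the leading Jacobi form of index $1$ lies in the image of multiplication by $M_{10}$ or $M_{12}$. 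This requires either the structure theory of Jacobi forms of index $1$ (Eichler--Zagier) or Igusa's explicit combinatorial analysis of theta-Fourier coefficients. Once this inclusion is established, repeatedly subtracting multiples of $M_{10}$ and $M_{12}$ lowers the weight and closes the induction, yielding $f \in \C[M_4, M_6, M_{10}, M_{12}]$.

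The real obstacle is thus proving $\ker(\Phi) = (M_{10}, M_{12})$; the constructions and algebraic independence are comparatively formal, but the generation statement genuinely requires the Fourier--Jacobi machinery (or, equivalently, a dimension-formula comparison matching the Hilbert series of $\C[M_4,M_6,M_{10},M_{12}]$ with $\sum_k \dim M_k(\Sp(4,\Z))$). For the present paper's applications, the explicit formulas for the generators and their integrality are what matter, so I would cite Igusa~\cite{Igusa1} for the delicate final step.
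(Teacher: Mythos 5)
The paper offers no proof of this statement: it is Igusa's structure theorem, quoted verbatim from \cite{Igusa1}, with the normalization $M_4=\psi_4$, $M_6=\psi_6$, $M_{10}=4\chi_{10}$, $M_{12}=12\chi_{12}$ fixed immediately afterwards. Your sketch is an accurate roadmap of Igusa's argument, and you correctly isolate the generation step $\ker\Phi=(M_{10},M_{12})$ as the real content, which you then also cite from Igusa --- so in the end your proposal and the paper rest on exactly the same external reference. The one caveat is that your algebraic-independence step is slightly underspecified: applying the $\Phi$-operator only eliminates the part of a putative relation not divisible by $M_{10}$ or $M_{12}$, and one still has to separate $M_{10}$ from $M_{12}$, e.g.\ by restricting to the reducible locus $\Half_1\times\Half_1$, where $\chi_{10}$ vanishes while $\chi_{12}$ restricts to a nonzero multiple of $\Delta\otimes\Delta$; but this is standard and is in any case subsumed by the citation.
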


In this paper, we normalize these generators so that their Fourier expansions
are primitive and~$M_{10},M_{12}$ are cusp forms. This defines them uniquely up
to sign. We can fix these signs by specifying their first few Fourier
coefficients:
\begin{displaymath}
  \begin{aligned}
    M_4(\tau) &= 1 + 240(q_1 + q_2) +
    O\bigl(q_1^2,q_2^2,q_1q_2 \bigr), \\
    M_6(\tau) &= 1 - 504(q_1 + q_2) +
    O\bigl(q_1^2,q_2^2,q_1q_2 \bigr), \\
    M_{10}(\tau) &= \bigl(q_3 - 2 + q_3^{-1}\bigr) q_1 q_2 + O(q_1^2,q_2^2),
    \qquad\text{and} \\
    M_{12}(\tau) &= \bigl(q_3 + 10 + q_3^{-1}\bigr) q_1 q_2 + O\bigl(q_1^2,q_2^2 \bigr).
    \\
  \end{aligned}
\end{displaymath}
We find this normalization more convenient than the ones usually considered in the literature.
In terms of Igusa's notation in~\cite{Igusa1}, we have
\begin{displaymath}
  M_4 = \psi_4,\quad M_6 = \psi_6,\quad M_{10} = 4\chi_{10}, \quad \text{and}
  \quad M_{12} = 12\chi_{12}.
\end{displaymath}
In terms of the modular forms~$h_k$ for~$k\in \{4,6,10,12\}$ from~\cite[\S7.1]{Streng}, we have
\begin{equation}
\label{eqn:m-to-h}
  M_4 = 2^{-2} h_4, \quad M_6 = 2^{-2} h_6, \quad M_{10} = -2^{-12} h_{10},\quad
  \text{and} \quad M_{12} = 2^{-15} h_{12}.
\end{equation}
From \cref{thm:mf-gens}, we deduce that for each~$\tau\in \Half_2$, at least one of the
values $M_k(\tau)$ for $k\in \{4,6,10,12\}$ does not vanish.

Igusa~\cite{Igusa2} further determined an explicit set of fourteen generators
for the graded ring of Siegel modular forms with integral Fourier coefficients,
which contains the above forms~$M_k$.  The following easy corollary of Igusa's
result will play an essential role in this paper.

\begin{prop}[{\cite[\S2.1]{KiefferEvaluation}}]
  \label{prop:int-gens}
  Let~$f$ be a Siegel modular form on~$\Half_2$ of even weight~$k$ with
  integral Fourier coefficients. Then~$12^kf\in \Z[M_4,M_6,M_{10}, M_{12}]$.
\end{prop}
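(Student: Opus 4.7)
The plan is to leverage Igusa's explicit description~\cite{Igusa2} of the integral graded ring of Siegel modular forms.  Write the $\Z$-algebra of even-weight forms with integral Fourier coefficients as $\Z[\psi_4,\psi_6,\chi_{10},\chi_{12},g_1,\ldots,g_m]$, where $g_1,\ldots,g_m$ denote the remaining even-weight generators of respective weights $w_1,\ldots,w_m$ among Igusa's fourteen.  By the freeness theorem (\cref{thm:mf-gens}), each $g_i$ admits a unique expression $g_i = P_i(\psi_4,\psi_6,\chi_{10},\chi_{12})$ with $P_i \in \C[X_4,X_6,X_{10},X_{12}]$; since all Fourier coefficients involved are rational, a finite comparison of coefficients shows $P_i \in \Q[X_4,X_6,X_{10},X_{12}]$.

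The first key step is to verify, for each additional generator $g_i$, that upon converting to the $M_k$-basis via $\chi_{10} = M_{10}/4$ and $\chi_{12} = M_{12}/12$ (using~\eqref{eqn:m-to-h}), the resulting expression $g_i = Q_i(M_4,M_6,M_{10},M_{12})$ satisfies
\begin{equation*}
  12^{w_i}\, Q_i \in \Z[M_4,M_6,M_{10},M_{12}].
\end{equation*}
This is a finite, mechanical check relying on the explicit polynomial identities between Igusa's fourteen generators given in~\cite{Igusa2}.

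In the second step, given an arbitrary even-weight form $f$ of weight $k$ with integral Fourier coefficients, expand it as a $\Z$-polynomial in Igusa's generators,
\begin{equation*}
  f = \sum_J \alpha_J\, \psi_4^{a_J} \psi_6^{b_J} \chi_{10}^{c_J} \chi_{12}^{d_J}\, g_1^{e_{1,J}} \cdots g_m^{e_{m,J}}, \qquad \alpha_J \in \Z,
\end{equation*}
where each monomial has total weight $k$, so that $4a_J + 6b_J + 10c_J + 12d_J + \sum_i w_i e_{i,J} = k$.  Substituting the expressions for $\chi_{10}, \chi_{12}$ and for each $g_i$, the denominator contributed by the $J$-th monomial divides $4^{c_J} \cdot 12^{d_J + \sum_i w_i e_{i,J}} = 2^{2c_J + 2d_J + 2 \sum_i w_i e_{i,J}} \cdot 3^{d_J + \sum_i w_i e_{i,J}}$.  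Using the weight identity and the trivial bounds $c_J \leq 10 c_J$ and $d_J \leq 12 d_J$, one obtains $c_J + d_J + \sum_i w_i e_{i,J} \leq k$ and $d_J + \sum_i w_i e_{i,J} \leq k$, so this denominator divides $12^k = 2^{2k} 3^k$.  Multiplying $f$ by $12^k$ therefore clears every denominator simultaneously, proving the claim.

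The main obstacle is the first step: establishing the sharp denominator bound $12^{w_i}$ for each of Igusa's additional generators.  This is a finite inspection, but it is the only nontrivial input; the rest of the argument is elementary bookkeeping of weights and $2$-adic/$3$-adic valuations.
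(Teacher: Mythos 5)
The paper itself does not prove \cref{prop:int-gens}; it quotes it from \cite[\S2.1]{KiefferEvaluation}, and the argument there is essentially the one you outline: reduce to Igusa's integral generators and bound denominators generator by generator. Your second step (the weight bookkeeping) is correct: an isobaric monomial of weight $k=4a+6b+10c+12d+\sum_i w_ie_i$ picks up a denominator dividing $4^{c}\,12^{\,d+\sum_i w_ie_i}$, which divides $12^{\,c+d+\sum_i w_ie_i}$ and hence $12^k$. So the overall strategy is the right one.

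Two gaps remain, one small and one essential. The small one: Igusa's fourteen generators include the \emph{odd}-weight cusp form $\chi_{35}$, so the even-weight part of the integral ring is not generated by the even-weight generators alone; a weight-$k$ monomial with $k$ even involves $\chi_{35}$ to an even power, so you must adjoin $\chi_{35}^2$ (weight $70$) to your list $g_1,\dots,g_m$ and subject it to the same denominator check. The essential one: your ``first key step'' --- that $12^{w_i}g_i\in\Z[M_4,M_6,M_{10},M_{12}]$ for each additional generator $g_i$ --- is exactly \cref{prop:int-gens} applied to $f=g_i$, and you assert it as a ``finite, mechanical check'' without carrying it out or citing where it is carried out. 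Since everything else in your argument is elementary, this check \emph{is} the mathematical content of the statement: one must go through Igusa's explicit expressions in \cite{Igusa2} for the higher-weight generators in terms of $\psi_4,\psi_6,\chi_{10},\chi_{12}$ and verify that, after the rescaling \eqref{eqn:m-to-h}, the denominators in weight $w$ divide $2^{2w}3^{w}$. As written, your proposal is a correct and honestly flagged reduction of the general case to the case of the generators (which is what \cite[\S2.1]{KiefferEvaluation} then finishes), but not yet a complete proof.
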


If~$f$ is a Siegel modular form of weight~$k$ with rational Fourier
coefficients, then we can also give~$f$ an algebraic meaning as
follows~\cite[Def.~1.1 p.\,137]{FaltingsChai}. Let~$A$ be a p.p.~abelian
surface over a number field~$L$ embedded in~$\C$, and let~$\omega$ be a basis
of the $1$-dimensional $L$-vector space $\wedge^2 \Omega^1(A)$. Then
$f(A,\omega)$ is a well-defined element of~$L$, and satisfies
$f(A, t\omega) = t^{-k} f(A,\omega)$ for all $t\in L^\times$.

The relation between $f(A,\omega)$ and the values of~$f$ on the Siegel half space~$\Half_2$ is the
following \cite[p.\,141]{FaltingsChai}. Let~$\tau\in \Half_2$ be a period
matrix of~$A$, and choose an
isomorphism~$\eta \colon A(\C)\to \C^2/(\Z^2\oplus\tau\Z^2)$. The basis of
differentials $(2\pi i\, dz_1, 2\pi i\, dz_2)$ induces a natural basis
of~$\wedge^2\Omega^1$ on the complex torus; call this
basis~$\omega(\tau)$. There is a unique~$r\in \C^\times$ such that
$\omega = r\cdot \eta^*\omega(\tau)$. Then $f(A,\omega) = r^{-k} f(\tau)$; one can check that this
quantity~$f(A,\omega)$ does not depend on the choice of~$\tau$ or~$\eta$.

Let now~$A$ be a p.p.~abelian surface over~$\Q$, and choose a basis $\omega$ of
$\wedge^2\Omega^1(A)$. Then the weighted projective point
\begin{equation}\label{eqn:def-mk}
  \paren[\big]{M_4(A,\omega) : M_6(A,\omega) : M_{10}(A,\omega) : M_{12}(A,\omega)}
  \in \P^{4,6,10,12}(\Q)
\end{equation}
is independent of~$\omega$, since scaling $\omega$ by $t^{-1}\in \Q^\times$ scales
the above coordinates by~$(t^4,t^6,t^{10},t^{12})$. We call this projective
point the \emph{modular invariants} of~$A$. This projective point has a unique
representative $(m_4,m_6,m_{10},m_{12}) \in \Z^4$ that is reduced in the sense
that no prime $p$ satisfies $p^k \mid m_k$ for all $k\in\{4,6,10,12\}$; by a
slight abuse of language, we also call this tuple of integers the modular
invariants of~$A$.

If~$A$ is the Jacobian of a genus~$2$ curve~$C$ defined over~$\Q$, then the
modular invariants of~$A$ can be computed as follows.
Let~$(I_2:I_4:I_6:I_{10}) \in \P^{2,4,6,10}(\Q)$ be the Igusa--Clebsch
invariants of~$C$ as defined in~\cite[\S2.1]{Streng}; these invariants are also
denoted by $(A:B:C:D)$ in~\cite{Igusa1} and $(A':B':C':D')$
in~\cite{Mestre}. Then, as a consequence of \eqref{eqn:m-to-h}, the modular
invariants of~$A$ are
\begin{equation}\label{eqn:m-to-I}
  (m_4:m_6:m_{10}:m_{12}) = \paren[\big]{2^{-2} I_4 : 2^{-3} \paren{ I_2 I_4 - 3 I_6}
    : - 2^{-12} I_{10} : 2^{-15} I_2 I_{10}}.
\end{equation}
In particular, on the input of~$C$, the modular invariants of~$A$ can easily be
computed from the expression of $I_2,\ldots, I_{10}$ as polynomials in the
coefficients of~$C$.

\subsection{Hecke correspondences}
\label{sub:hecke}

Consider a period matrix~$\tau\in \Half_2$ attached to a p.p. abelian
surface~$A$ over~$\C$. Then the period matrices of abelian surfaces linked
to~$A$ by an isogeny of a given type can be computed by letting certain
symplectic matrices act on~$\tau$. This precisely corresponds to analytic
formulas for the action of Hecke operators on spaces of Siegel modular
forms~\cite[\S10]{CleryvanderGeer},~\cite[Chap.\ VI, \S5]{Krieg}.
The case of 1-step isogenies corresponds to the Hecke operator usually denoted
by~$T(\ell)$, and 2-step isogenies correspond to the Hecke
operator~$T_{1}(\ell^2)$.  Concretely, we define the following collections of
matrices:
\begin{enumerate}
\item $S(\ell)$ consists of the~$\ell^3+\ell^2+\ell+1$ matrices of the form
  \begin{displaymath}
    \left(
      \begin{smallmatrix}
        1&0&a&b\\
        0&1&b&c\\
        0&0&\ell&0\\
        0&0&0&\ell
      \end{smallmatrix}
    \right),\quad
    \left(
      \begin{smallmatrix}
        \ell&0&0&0\\
        -a&1&0&b\\
        0&0&1&a\\
        0&0&0&\ell
      \end{smallmatrix}
    \right),\quad
    \left(
      \begin{smallmatrix}
        1&0&a&0\\
        0&\ell&0&0\\
        0&0&\ell&0\\
        0&0&0&1
      \end{smallmatrix}
    \right)\quad \text{or}\quad
    \left(
      \begin{smallmatrix}
        \ell&0&0&0\\
        0&\ell&0&0\\
        0&0&1&0\\
        0&0&0&1
      \end{smallmatrix}
    \right)
  \end{displaymath}
  where $a,b,c$ run through $\{0,\ldots,\ell-1\}$;
\item $S(\ell^2)$ consists of the~$\ell^4+\ell^3+\ell^2+\ell$ matrices of the
  following form:
  \begin{displaymath}
    \left(
      \begin{smallmatrix}
        \ell&0&0&a\ell\\
        -b&1&a&ab+d\\
        0&0&\ell&b\ell\\
        0&0&0&\ell^2
      \end{smallmatrix}
    \right),\quad
    \left(
      \begin{smallmatrix}
        1&0&d&-a\\
        0&\ell&-\ell a&0\\
        0&0&\ell^2&0\\
        0&0&0&\ell
      \end{smallmatrix}
    \right),\quad
    \left(
      \begin{smallmatrix}
        \ell^2&0&0&0\\
        -a\ell&\ell&0&0\\
        0&0&1&a\\
        0&0&0&\ell
      \end{smallmatrix}
    \right)\quad \text{or}\quad
    \left(
      \begin{smallmatrix}
        \ell&0&0&0\\
        0&\ell^2&0&0\\
        0&0&\ell&0\\
        0&0&0&1
      \end{smallmatrix}
    \right)
  \end{displaymath}
  where $a,b$ run through~$\{0,\ldots,\ell-1\}$ and~$d$ runs
  through~$\{0,\ldots,\ell^2-1\}$; as well as
  \begin{displaymath}
    \left(
      \begin{smallmatrix}
        \ell&0&a&b\\
        0&\ell&b&c\\
        0&0&\ell&0\\
        0&0&0&\ell
      \end{smallmatrix}
    \right)
  \end{displaymath}
  where $a,b,c$ run through~$\{0,\ldots,\ell-1\}$ with the additional
  conditions that~$ac=b^2$ and $(a,b,c)\neq (0,0,0)$.
\end{enumerate}
The set~$S(\ell)$ (resp.~$S(\ell^2)$) consists of matrices in~$\GSp(4,\R)^+$
with integral coefficients, zero lower left block, and with the property that the determinant of their lower right block divides~$\ell^2$ (resp.~$\ell^3$).

\begin{prop}
  \label{prop:hecke}
  Let~$\tau\in \Half_2$, let~$A = \C^2/(\Z^2\oplus\tau\Z^2)$ be the
  p.p.~complex abelian surface attached to~$\tau$, and let~$\ell$ be a prime
  number.
  \begin{enumerate}
  \item \label{it:hecke-1} The matrices~$\gamma\tau$ for~$\gamma\in S(\ell)$ are period matrices
    for the abelian surfaces~$A/K$ where~$K$ runs through the maximal
    isotropic subgroups of~$A[\ell]$.
  \item \label{it:hecke-2} The matrices~$\gamma\tau$ for~$\gamma\in S(\ell^2)$
    are period matrices for the abelian surfaces~$A/K$ where~$K$ runs through the
    maximal isotropic subgroups of~$A[\ell^2]$ isomorphic
    to~$(\Z/\ell\Z)^2\times (\Z/\ell^2\Z)$.
  \end{enumerate}
\end{prop}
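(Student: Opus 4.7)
The plan is to exploit the standard correspondence between complex p.p.\ abelian surfaces and pairs $(\Lambda, E)$ consisting of a rank-$4$ lattice $\Lambda \subset \C^2$ and a unimodular alternating form~$E$. Setting $\Lambda_\tau = \Z^2 \oplus \tau\Z^2$ with canonical symplectic $\Z$-basis $(e_1, e_2, \tau e_1, \tau e_2)$ and form $E_\tau$ whose Gram matrix in this basis is $\bigl(\begin{smallmatrix} 0 & I \\ -I & 0\end{smallmatrix}\bigr)$, a maximal isotropic subgroup $K \subseteq A[\ell]$ (resp.\ of $A[\ell^2]$ of the prescribed type) corresponds bijectively, via $K = \Lambda_K/\Lambda_\tau$, to a lattice $\Lambda_\tau \subseteq \Lambda_K \subseteq \tfrac{1}{\ell}\Lambda_\tau$ (resp.\ $\tfrac{1}{\ell^2}\Lambda_\tau$) of index $\ell^2$ (resp.\ $\ell^4$) over $\Lambda_\tau$. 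Under this correspondence $A/K \cong \C^2/\Lambda_K$ as complex tori, and a direct computation with the Weil pairing shows that $\ell E_\tau$ (resp.\ $\ell^2 E_\tau$) restricts to a unimodular $\Z$-valued alternating form on $\Lambda_K$, endowing $\C^2/\Lambda_K$ with a natural principal polarization.

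Next I would choose a symplectic $\Z$-basis $(\lambda_1, \lambda_2, \mu_1, \mu_2)$ of $(\Lambda_K, \ell E_\tau)$ (resp.\ $(\Lambda_K, \ell^2 E_\tau)$), let $C \in \mathrm{GL}_2(\C)$ be the unique matrix with $C\lambda_j = e_j$ for $j = 1, 2$, and define $\tau' \in \Half_2$ by $C\mu_j = \tau' e_j$. Multiplication by~$C$ then yields an isomorphism of p.p.\ abelian surfaces $\C^2/\Lambda_K \to \C^2/(\Z^2 \oplus \tau'\Z^2)$, so $\tau'$ is a period matrix of $A/K$. Expressing the $\lambda_j$ and $\mu_j$ in the canonical basis of $\Lambda_\tau$ and assembling the coefficients gives, after a standard bookkeeping transposition, a matrix $\gamma \in \GSp(4, \Q)^+$ with $\tau' = \gamma\tau$. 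The containment $\Lambda_\tau \subseteq \Lambda_K \subseteq \tfrac{1}{\ell}\Lambda_\tau$ (resp.\ $\tfrac{1}{\ell^2}\Lambda_\tau$) together with the unimodularity of the rescaled form then force $\gamma$, after rescaling by a suitable scalar, to have integer entries, zero lower-left block, and similitude factor $\ell$ (resp.\ $\ell^2$); conversely, every such $\gamma$ arises in this way from a valid $\Lambda_K$.

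What remains is to verify that the explicit families listed in $S(\ell)$ and $S(\ell^2)$ form a complete set of coset representatives modulo the left action of $\Sp(4, \Z)$, which encodes the freedom in choosing the symplectic basis of $\Lambda_K$. The cardinalities match: the Lagrangian Grassmannian of the symplectic space $A[\ell] \cong \F_\ell^4$ has $(\ell+1)(\ell^2+1) = |S(\ell)|$ elements, and a Smith-normal-form analysis enumerates $\ell^4 + \ell^3 + \ell^2 + \ell = |S(\ell^2)|$ maximal isotropic subgroups of $A[\ell^2]$ of the prescribed isomorphism type. The main obstacle, and the bulk of the work, is the explicit case enumeration: for each intersection pattern of $K$ with the coordinate Lagrangians $\Z^2/\ell\Z^2$ and $\tau\Z^2/\ell\tau\Z^2$ (or their $\ell^2$-analogues), one must produce a symplectic basis of $\Lambda_K$ whose associated $\gamma$ falls into precisely one of the four (resp.\ five) block shapes appearing in the definition of $S(\ell)$ (resp.\ $S(\ell^2)$). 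This Bruhat-type decomposition of the relevant Hecke double coset is carried out in the references cited alongside the statement, and once verified it simultaneously proves exhaustiveness of the listed families and that distinct matrices yield inequivalent quotients.
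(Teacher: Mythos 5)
Your argument is correct and is essentially the paper's proof in different clothing: where you work directly with lattices $\Lambda_\tau \subseteq \Lambda_K \subseteq \ell^{-i}\Lambda_\tau$ and symplectic bases, the paper packages the same correspondence via the congruence subgroups $\Gamma^0(\ell)$ and $\Gamma^0(\ell^2)$ and the double cosets $\Sp(4,\Z)\,\Diag(1,1,\ell,\ell)\,\Sp(4,\Z)$ and $\Sp(4,\Z)\,\Diag(1,\ell,\ell^2,\ell)\,\Sp(4,\Z)$. Both treatments defer the decisive combinatorial step --- that $S(\ell)$ and $S(\ell^2)$ are complete sets of left coset representatives --- to the cited results of Cl\'ery--van der Geer and Krieg, so nothing essential is missing from your outline.
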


\begin{proof}
  First, we consider $1$-step isogenies. Consider the subgroup $\Gamma^0(\ell)$
  of $\Sp_4(\Z)$ defined as
  \[
    \Gamma^0(\ell) \coloneqq \left\{\begin{pmatrix}a&b\\c&d\end{pmatrix}\in
      \Sp(4,\Z): b=0\pmod\ell\right\}.
  \]
  The map
  \[
    \tau \mapsto \paren[\big]{\C^2/(\Z^2\oplus\tau\Z^2), (\Z^2\oplus
      \tfrac1\ell\tau\Z^2)/(\Z^2\oplus\tau\Z^2)}
  \]
  is a bijection between $\Gamma^0(\ell)\backslash\Half_2$ and the set of
  isomorphism classes of pairs~$(A,K)$, where~$A$ is a p.p.~abelian surface
  over~$\C$ and~$K\subset A[\ell]$ is maximal isotropic for the Weil
  pairing. (This is proved in an analogous way to \cite[Thm.\
  3.2]{BroekerLauter}, which uses the subgroup $\Gamma_0(\ell)$ instead.) The
  quotient surface~$A/K$ admits~$\frac1\ell \tau$ as a period matrix. The
  period matrices we wish to enumerate are therefore the
  $\frac1\ell\gamma\tau$, where $\gamma$ runs through a (finite) set of
  representatives of $\Gamma^0(\ell)\backslash\Sp(4,\Z).$

  We can rewrite these matrices using the action of~$\GSp(4,\Q)^+$, noting that
  for all $\gamma\in \Sp(4,\Z)$,
  \[
    \paren[\big]{\Diag(1,1,\ell,\ell)\gamma}\cdot \tau = \tfrac1\ell(\gamma\tau).
  \]
  This leads us to the formalism of Hecke operators in terms of double
  cosets: we have
\[
    \Gamma^0(\ell) = \Sp(4,\Z) \cap \Diag(1,1,\ell,\ell)^{-1} \Sp(4,\Z) \Diag(1,1,\ell,\ell),
\]
so the map $\gamma\mapsto \Diag(1,1,\ell,\ell)\gamma$ induces a bijection
\[
    \Gamma^0(\ell) \backslash \Sp(4,\Z) \to \Sp(4,\Z) \backslash \Sp(4,\Z) \Diag(1,1,\ell,\ell)\Sp(4,\Z).
\]
By~\cite[Prop.\ 10.1]{CleryvanderGeer}, the set $S(\ell)$ is a set of representatives for the coset space on the right hand side (we acted on some representatives by elements of~$\Sp_4(\Z)$ to simplify them). This proves~\eqref{it:hecke-1}.

The $2$-step case is similar. The reduction map
$\Sp(4,\Z)\to \Sp(4,\Z/\ell^2\Z)$ is surjective (see \cite[\S
3]{BroekerLauter}), and we define $\Gamma^0(\ell^2)$ as the preimage in
$\Sp(4,\Z)$ of the stabilizer of the maximal isotropic subgroup
$\bigl\langle (0,\ell,0,0), (0,0,1,0), (0,0,0,\ell) \bigr\rangle$
in~$(\Z/\ell^2\Z)^4$. The map
\[
  \tau\mapsto \paren[\big]{\C^2/(\Z^2\oplus\tau \Z^2), (\Z\oplus
    \tfrac1\ell\Z)\oplus \tau(\tfrac{1}{\ell^2}\Z \oplus
    \tfrac1\ell\Z))/(\Z^2\oplus\tau\Z^2)}
\]
is a bijection between $\Gamma^0(\ell^2)\backslash \Half_2$ and the set
of isomorphism classes of pairs~$(A,K)$, where $A$ is a p.p.~abelian surface
over~$\C$ and $K\subset A[\ell^2]$ is a maximal isotropic subgroup for the Weil
pairing and isomorphic to $(\Z/\ell\Z)^2\times \Z/\ell^2\Z$. A period matrix
for the quotient abelian surface $A/K$ is then
$\Diag(1,\ell,\ell^2,\ell)\,\tau$. As above, a reformulation in terms of double
cosets will be convenient. We have
\[
  \Gamma^0(\ell^2) = \Sp(4,\Z) \cap \Diag(1,\ell,\ell^2,\ell)^{-1}
  \Sp(4,\Z)\Diag(1,\ell,\ell^2,\ell),
\]
so the map $\gamma \mapsto \Diag(1,\ell,\ell^2,\ell)\gamma$ induces a bijection
\[
  \Gamma^0(\ell^2)\backslash \Sp(4,\Z) \to \Sp(4,\Z) \backslash \Sp(4,\Z)
  \Diag(1,\ell,\ell^2,\ell) \Sp(4,\Z).
\]
Combining~\cite[Prop.\ 10.5]{CleryvanderGeer} with \cite[Chap.\ VI,
Lem.~5.2]{Krieg}, we find that $S(\ell^2)$ is exactly a set of representatives
for the coset space on the right hand side.
\end{proof}

\subsection{Rational invariants versus rational isogenies}
\label{sub:ratl-invariants}

Our algorithm will enumerate period matrices~$\gamma\tau$ using \cref{prop:hecke} and
evaluate the modular forms~$M_k$ at these points. If~$\gamma\tau$ corresponds to an abelian surface~$A'$ that is isogenous
to~$A$ over~$\Q$,
then~$\paren[\big]{M_4(\gamma\tau):M_6(\gamma\tau):M_{10}(\gamma\tau):M_{12}(\gamma\tau)}$
must be rational as a weighted projective point, i.e.~up to complex scaling
with the correct weights. In fact, the converse statement also holds.

\begin{prop}
  \label{prop:ratl-invariants}
  Let~$A$ be a typical p.p.~abelian surface over~$\Q$,
  let~$\tau$ be a period matrix of~$A$, let~$\ell$ be a prime number, and let~$i\in \{1,2\}$. Then
  \begin{enumerate}
  \item
    \label{item:ratl-invariants-distinct}
    As~$\gamma$ runs through~$S(\ell^i)$, the projective
    points~$\paren[\big]{M_4(\gamma\tau):\cdots:M_{12}(\gamma\tau)}$
    in~$\P^{4,6,10,12}(\C)$ are all distinct.
  \item \label{it:ratl-invariants-2} Fix~$\gamma\in S(\ell^i)$, and assume that
    there exists a scalar~$\lambda\in \C^\times$ such that
    $\lambda^k M_k(\gamma\tau)\in\Q$ for each
    $k\in\{4,6,10,12\}$. Then~$\gamma\tau$ is a period matrix of a p.p.~abelian
    surface~$A'$, defined over~$\Q$, such that there exists an $i$-step isogeny
    $A\to A'$ of degree~$\ell^{2i}$ defined over~$\Q$.
  \end{enumerate}
\end{prop}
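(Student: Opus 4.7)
The plan is to prove both parts by exploiting the content of \cref{prop:hecke}: $S(\ell^i)$ is a set of distinct left-coset representatives for $\Sp(4,\Z)\backslash\Sp(4,\Z)D_i\Sp(4,\Z)$ (with $D_i$ the diagonal matrix $\Diag(1,1,\ell,\ell)$ for $i=1$ or $\Diag(1,\ell,\ell^2,\ell)$ for $i=2$), and as $\gamma$ ranges over $S(\ell^i)$ the period matrices $\gamma\tau$ parametrize the p.p.~quotients $A/K_\gamma$ by maximal isotropic subgroups $K_\gamma\subset A[\ell^{2i}]$ of the appropriate shape.

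For part~\eqref{item:ratl-invariants-distinct}, by \cref{thm:mf-gens} the weighted projective tuple $(M_4:\cdots:M_{12})$ provides coordinates on $\A_2(\C)\simeq\Sp(4,\Z)\backslash\Half_2$, so an equality of invariants at $\gamma\tau$ and $\gamma'\tau$ is equivalent to $\sigma\gamma\tau = \gamma'\tau$ for some $\sigma\in\Sp(4,\Z)$; equivalently, $(\sigma\gamma)^{-1}\gamma'$ fixes $\tau$ inside $\GSp(4,\Q)^+$. Typicality of $A$ forces this stabilizer to be $\Q^\times\cdot I_4$, since rational elements of the stabilizer correspond to polarization-compatible elements of $\End(A)\otimes\Q = \Q$. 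Writing $(\sigma\gamma)^{-1}\gamma' = c\,I_4$ and comparing similitude factors (those of $\gamma$ and $\gamma'$ both coincide with that of $D_i$, while $\sigma$ has similitude~$1$) gives $c^2 = 1$, so $\gamma'\in\Sp(4,\Z)\cdot\gamma$; distinctness of the coset representatives in $S(\ell^i)$ then forces $\gamma = \gamma'$.

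For part~\eqref{it:ratl-invariants-2}, I combine part~\eqref{item:ratl-invariants-distinct} with a Galois descent argument. The hypothesis on $\lambda$ is equivalent to the statement that $(M_4(\gamma\tau):\cdots:M_{12}(\gamma\tau))\in\P^{4,6,10,12}(\Q)$, so this point is fixed by $\Gal(\overline\Q/\Q)$. For any $\sigma$ in this Galois group, $\sigma(K_\gamma)$ is again a maximal isotropic subgroup of $A[\ell^{2i}]$ of the right type, hence equals $K_{\gamma'}$ for a unique $\gamma'\in S(\ell^i)$; moreover $A/K_{\gamma'} = \sigma(A/K_\gamma)$ has modular invariants that are the Galois conjugates of those of $A/K_\gamma$, so they agree with them projectively. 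Part~\eqref{item:ratl-invariants-distinct} then yields $\gamma' = \gamma$, i.e.~$\sigma(K_\gamma) = K_\gamma$ for every $\sigma$. Consequently $K_\gamma$ descends to a $\Q$-rational subgroup scheme of $A$, the quotient $A'\coloneqq A/K_\gamma$ is a p.p.~abelian surface over $\Q$ with period matrix $\gamma\tau$, and the projection $A\to A'$ is the required $\Q$-rational $i$-step $\ell$-isogeny of degree $\ell^{2i}$.

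The main obstacle is the stabilizer computation in part~\eqref{item:ratl-invariants-distinct}: one must verify cleanly that typicality really cuts the $\GSp(4,\Q)^+$-stabilizer of $\tau$ down to the scalar matrices. This is the unique place where the assumption $\End(A_{\overline\Q}) = \Z$ enters essentially, and it rests on the standard identification between rational elements of the stabilizer of $\tau$ and polarization-compatible units of $\End(A)\otimes\Q$. Once this identification is stated precisely, everything else reduces to similitude bookkeeping and routine Galois descent.
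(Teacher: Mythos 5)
Your proof is correct; part~\eqref{it:ratl-invariants-2} is essentially the paper's argument (Galois conjugates of $K_\gamma$ produce equal projective invariants, so part~\eqref{item:ratl-invariants-distinct} forces $K_\gamma$ to be Galois-stable and hence defined over~$\Q$). For part~\eqref{item:ratl-invariants-distinct}, however, you take a genuinely different route. The paper stays on the algebraic side: if $\gamma_1\neq\gamma_2$ gave equal invariants, the two quotients would be isomorphic, and composing the two isogenies with duals and polarizations yields two elements of $\End((A_2)_{\Qbar})=\Z$ of the same degree, hence equal up to sign, forcing the kernels to coincide. You instead work with the uniformization: equal invariants means $\Sp(4,\Z)$-equivalence of $\gamma\tau$ and $\gamma'\tau$ (this uses that the $M_k$ separate points of $\A_2(\C)$, which is the Baily--Borel/Igusa statement quoted in \S\ref{sub:siegel-mf} rather than \cref{thm:mf-gens} alone --- the paper's own proof implicitly uses the same fact), and then typicality cuts the $\GSp(4,\Q)^+$-stabilizer of $\tau$ down to $\Q^\times I_4$, after which the similitude comparison and the distinctness of the coset representatives in $S(\ell^i)$ finish the argument. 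Both proofs use $\End(A_{\Qbar})=\Z$ in ultimately the same way (two quasi-isogenies between the same surfaces of equal degree differ by $\pm 1$); the one claim you leave as ``standard'' --- that a rational element of the stabilizer of $\tau$ is determined by, and scalar whenever, its image in $(\End(A)\otimes\Q)^\times$ --- does need a sentence of justification via the faithfulness of the rational representation on $H_1(A,\Q)\cong\Q^4$, but it is indeed standard. Your version makes the role of the left-coset decomposition behind \cref{prop:hecke} more explicit; the paper's version is slightly more self-contained given the isogeny formalism of \cref{sec:isog-types}.
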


\begin{proof}
  We first prove~\eqref{item:ratl-invariants-distinct} by contradiction.  If
  these projective points happen to be equal for some~$\gamma_1\neq\gamma_2$,
  then we have a (non-commutative) diagram
  \begin{displaymath}
    \begin{tikzcd}
      & A \ar[ld, swap, "f_1"] \ar[rd, "f_2"] \ar[rr, "\lambda_A"] && A^\vee\\
      A_1 \arrow[rr, "\eta", "\sim"'] && A_2 \ar[rr, "\lambda_{A_2}"] &&
      A_2^\vee \ar[ul, swap, "f_2^\vee"]
    \end{tikzcd}
  \end{displaymath}
  where~$f_1,f_2$ are isogenies with distinct kernels, and~$\eta$ is an
  isomorphism. Then the compositions
  $\eta\circ f_1\circ \lambda_A^{-1}\circ \dual{f_2}\circ \lambda_{A_2}$ and
  $f_2\circ \lambda_A^{-1}\circ \dual{f_2}\circ \lambda_{A_2}$ yield elements
  of~$\End((A_2)_{\Qbar}) = \Z$ of the same degree~$\ell^{2i}$. They must
  therefore be equal up to sign.  Thus $\eta \circ f_1 = \pm f_2$ and
  $\ker f_1 = \ker f_2$, a contradiction.

  For~\eqref{it:ratl-invariants-2}, let~$K$ be the subgroup of~$A$ attached
  to~$\gamma$. If~$K$ is not defined over~$\Q$, then the Galois orbit of~$K$
  consists of several subgroups of~$A$, and the associated quotients have the
  same modular invariants,
  contradicting~\eqref{item:ratl-invariants-distinct}. Therefore~$K$ and the
  associated isogeny are defined over~$\Q$.
\end{proof}

\subsection{Certification}
\label{sub:certification}

By \cref{prop:ratl-invariants}, evaluating modular invariants and detecting
when they are rational suffices to detect rational isogenies. In practice
however, we will have to manipulate these complex numbers up to some finite
precision. While rational invariants can still be detected heuristically and
the isogeny might be proved to exist by other methods, certifying the
non-existence of a rational isogeny is not immediate.

A key idea to achieve certification is to leverage the fact that we manipulate
modular forms with integral Fourier coefficients to compute algebraic integers
instead of just complex numbers. This technique is inspired from the description
of denominators for modular equations in ~\cite{KiefferEvaluation}, and allows us to rule out non-rational isogenies. As a byproduct, we are also
able to certify the existence of rational isogenies using computations
over~$\C$ only.

Concretely, let~$A,\tau$ and~$\ell$ be as in \cref{prop:ratl-invariants}, and denote
the modular invariants of~$A$ as
defined in \S \ref{sub:siegel-mf} by~${(m_4,m_6,m_{10},m_{12})\in \Z^4}$. Then there exists a scalar~$\lambda\in \C^\times$,
uniquely determined up to sign (but dependent on~$\tau$), such that
\begin{equation}
\label{eq:lambda-mk}
  \lambda^k M_k(\tau) = m_k \text{ for all } k\in \{4,6,10,12\}.
\end{equation}
Let~$f$ be a Siegel modular form of even
weight~$k$ with integral Fourier coefficients. For~$\gamma\in S(\ell^i)$, we define
\begin{equation}
\label{eq:Nfgamma}
  N(f,\gamma) \coloneqq (12\lambda)^k (\ell^d \det(\gamma^*\tau)^{-1})^k f(\gamma\tau).
\end{equation}
with $d=2$ if $i=1$, and $d=3$ if $i=2$. (The central factor
$\ell^d\det(\gamma^*\tau)^{-1}$ is a power of~$\ell$ independent of~$\tau$.) We
will show that the $N(f,\gamma)$ are in fact algebraic integers.

The first step is to reinterpret the complex numbers $N(f,\gamma)$
algebraically. Let~$\omega$ be a basis of $\wedge^2\Omega^1(A)$ such that
$M_j(A,\omega) = m_j$ for all $j\in\{4,6,10,12\}$, and let~$K$ be the subgroup
of~$A$ corresponding to~$\gamma$ via the correspondence of
\cref{prop:hecke}. The subgroup~$K$ is not necessarily defined over~$\Q$;
let~$L$ be its field of definition, which is a number field embedded
in~$\C$. Then pulling back differential forms along the quotient isogeny
$A\to A/K$ induces a bijection between the $L$-vector spaces
$L\otimes_{\Q}\Omega^1(A)$ and~$\Omega^1(A/K)$. Under this bijection, $\omega$ corresponds to an $L$-basis~$\omega'$ of~$\wedge^2\Omega^1(A/K)$.

\begin{lem}
\label{lem:Nfgamma-algebraic}
    With the above notation, we have $N(f,\gamma) = 12^k f(A/K,\omega')$.
\end{lem}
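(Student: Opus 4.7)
The plan is to compute $f(A/K,\omega')$ directly from the algebraic-analytic dictionary of \S\ref{sub:siegel-mf} and compare with the formula for $N(f,\gamma)$. Using $\gamma\tau$ as a period matrix of $A/K$ via the correspondence of \cref{prop:hecke}, I fix an isomorphism $\eta'\colon A/K\xrightarrow{\sim}\C^2/(\Z^2\oplus\gamma\tau\Z^2)$ and write $\omega' = r'\,(\eta')^*\omega(\gamma\tau)$ for some $r'\in\C^\times$, so that $f(A/K,\omega') = (r')^{-k}f(\gamma\tau)$. The lemma therefore reduces to the identity $(r')^{-1} = \lambda\,\ell^d/\det(\gamma^*\tau)$.

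On $A=A_\tau$ itself, the defining relation $\lambda^k M_k(\tau)=m_k=M_k(A,\omega)$ forces $\omega = \lambda^{-1}\omega(\tau)$ (the sign ambiguity in $\lambda$ does not matter since all weights involved are even). Viewing the quotient isogeny $\pi\colon A\to A/K$, composed with $\eta'$, as a morphism $A_\tau\to A_{\gamma\tau}$, it is induced on universal covers by some $\C$-linear map $\psi\colon\C^2\to\C^2$. The change-of-variables formula on top holomorphic forms gives $\pi^*\omega(\gamma\tau) = \det(\psi)\,\omega(\tau)$, so substituting into $\pi^*\omega'=\omega$ yields $r'\det(\psi) = \lambda^{-1}$, equivalently $(r')^{-1} = \lambda\det(\psi)$. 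It remains to show $\det(\psi) = \ell^d/\det(\gamma^*\tau)$.

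To compute $\det(\psi)$, I use the double-coset description from the proof of \cref{prop:hecke}: write $\gamma = \delta D\gamma_0$ with $\delta,\gamma_0\in\Sp(4,\Z)$, where $D = \Diag(1,1,\ell,\ell)$ when $i=1$ and $D = \Diag(1,\ell,\ell^2,\ell)$ when $i=2$. Then $\pi$ factors on universal covers as
\[
  A_\tau \xrightarrow{(\gamma_0^*\tau)^{-t}} A_{\gamma_0\tau} \xrightarrow{\pi_0} A_{D\gamma_0\tau} \xrightarrow{(\delta^*(D\gamma_0\tau))^{-t}} A_{\gamma\tau},
\]
where the outer maps are the standard $\Sp(4,\Z)$-isomorphisms and the middle map $\pi_0$ is the quotient by the canonical subgroup $K_0\subset A_{\gamma_0\tau}$ described in the proof of \cref{prop:hecke}. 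Unpacking that description realizes $A_{\gamma_0\tau}/K_0$ as $\C^2/\Lambda''$ with $\Lambda'' = D_1^{-1}\bigl(\Z^2\oplus(D\gamma_0\tau)\Z^2\bigr)$, where $D_1$ is the upper-left block of $D$; the canonical identification of this quotient with $A_{D\gamma_0\tau}$ is therefore multiplication by $D_1$, and hence $\pi_0$ is induced by $z\mapsto D_1 z$. Taking determinants of all three maps and applying the cocycle identity $\det(\gamma^*\tau) = \det(\delta^*(D\gamma_0\tau))\cdot\det(D_2)\cdot\det(\gamma_0^*\tau)$ (with $D_2$ the lower-right block of $D$) yields $\det(\psi) = \det(D_1)\det(D_2)/\det(\gamma^*\tau) = \ell^d/\det(\gamma^*\tau)$, as required.

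The main obstacle is the middle step: correctly identifying the $\C$-linear map underlying $\pi_0$. For 1-step isogenies the upper-left block $D_1=I$, the quotient lattice $\Lambda''$ equals $\Z^2\oplus(D\gamma_0\tau)\Z^2$ on the nose, and $\pi_0$ is the identity on universal covers; but for 2-step isogenies $D_1=\Diag(1,\ell)$ is nontrivial, so the identification $A_{\gamma_0\tau}/K_0\cong A_{D\gamma_0\tau}$ contributes an extra factor $\det(D_1)=\ell$ that must be carefully accounted for. Once $\pi_0$ is pinned down, the rest of the argument is routine determinant bookkeeping.
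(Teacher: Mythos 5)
Your strategy is the same as the paper's: write $\gamma=\delta D\gamma_0$, reduce the lemma to computing the Jacobian $\det(\psi)$ of the analytic quotient map on universal covers, and evaluate it with the cocycle identity. You are in fact more explicit than the paper about the middle map $\pi_0$, and your identification of $\pi_0$ with $z\mapsto D_1z$ (via $\Lambda''=D_1^{-1}\bigl(\Z^2\oplus(D\gamma_0\tau)\Z^2\bigr)$) is correctly derived from the description of the subgroups in the proof of \cref{prop:hecke}. The problem is your very last equality. You conclude $\det(\psi)=\det(D_1)\det(D_2)/\det(\gamma^*\tau)=\ell^d/\det(\gamma^*\tau)$, but $\det(D_1)\det(D_2)=\ell^d$ only when $i=1$, where it is $1\cdot\ell^2=\ell^2$. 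For $i=2$ you get $\det(D_1)\det(D_2)=\ell\cdot\ell^3=\ell^4$, whereas $d=3$ in \eqref{eq:Nfgamma}. So your computation actually yields $\det(\psi)=\ell^{2i}/\det(\gamma^*\tau)$ and hence $12^k f(A/K,\omega')=(12\lambda)^k\bigl(\ell^{2i}\det(\gamma^*\tau)^{-1}\bigr)^k f(\gamma\tau)$, which differs from $N(f,\gamma)$ by $\ell^k$ in the 2-step case. You flagged the extra factor $\det(D_1)=\ell$ as the delicate point and then never absorbed it anywhere: the final line is a false numerical identity for $i=2$, not a proof.

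To close the gap you must reconcile your description of $\pi_0$ with the paper's treatment of the corresponding map $\xi$: in the paper's proof $\xi$ contributes no Jacobian factor at all (the displayed formula for $\omega(\tau)$ contains only $\det(\delta_1^*\tau)$ and $\det(\delta_2^*(\Delta\delta_1\tau))$), i.e.\ it is treated as though $D_1=I$ in both cases. The two arguments therefore genuinely differ by $\det(D_1)=\ell^{\,i-1}$, and only one can be consistent with the constant $\ell^d$ in \eqref{eq:Nfgamma}. Note that a covolume sanity check, $|\det\psi|^2\det\im(\tau)=\deg(\pi)\cdot\det\im(\gamma\tau)$ with $\deg(\pi)=\ell^{2i}$, gives $|\det\psi|=\ell^{2i}\,|\det(\gamma^*\tau)|^{-1}$, in agreement with your description of $\pi_0$; so the discrepancy cannot be made to disappear by redefining the middle map, and you are left either exhibiting a different normalization of the quotient's period matrix or accepting that the constant in the statement changes when $i=2$. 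As written, your argument establishes the lemma for $i=1$ but not for $i=2$.
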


\begin{proof}
    By~\cite[Rem.~8.1.4]{BirkenhakeLange}, for every $\tau'\in\Half_2$ and $\gamma\in \Sp(4,\Z)$, the map $z\mapsto (\gamma^*\tau')^{-t}z$ defines an isomorphism between the abelian surfaces $\C^2/(\Z^2\oplus\tau'\Z^2)$ and $\C^2/(\Z^2\oplus(\gamma\tau')\Z^2)$. Keeping the notation from above the lemma, we
  write $\gamma = \delta_2 \Delta \delta_1$ where
  $\delta_1,\delta_2\in \Sp(4,\Z)$ and $\Delta$ is either
  $\Diag(1,1,\ell,\ell)$ or $\Diag(1,\ell,\ell^2,\ell)$ depending
  on~$i$. We then consider the commutative diagram
  \[
  \begin{tikzcd}[sep=tiny]
      & A \ar[rrr,  ->>]  \ar[ld, swap, "\eta_1"] &&& A/K \ar[rd, "\eta_2"]& \\
      \C^2/(\Z^2\oplus\tau\Z^2) \ar[rd, "\zeta_1"] & &&& & \C^2/(\Z^2\oplus\gamma\tau\Z^2) \\
      &
      \C^2/(\Z^2\oplus\delta_1\tau\Z^2)
      \ar[rrr, "\xi", ->>] &&&
      \C^2/(\Z^2\oplus \Delta \delta_1\tau\Z^2) \ar[ru, "\zeta_2"] &
  \end{tikzcd}
  \]
  where~$\zeta_1$ (resp.~$\zeta_2$) is the isomorphism $z\mapsto (\delta_1^*\tau)^{-t}z$ (resp.~$z\mapsto (\delta_2^*(\Delta\delta_1\tau))^{-t} z$),
  double-tipped arrows denote
  natural quotient maps, $\eta_1$ is an isomorphism of complex tori,
  and~$\eta_2$ is another isomorphism that is determined by the choice
  of~$\eta_1$. We recall that $\omega(\tau)$ denotes the element
  $2\pi i dz_1 \wedge 2\pi i dz_2$
  of~$\wedge^2\Omega^1\paren[\big]{\C^2/(\Z^2\oplus\tau\Z^2)}$, where~$z_1,z_2$
  are the coordinates of~$\C^2$.  Let~$r\in \C^\times$ be such that
  $\omega' = r\,\eta_2^*\,\omega(\gamma\tau)$ as differential forms on~$A/K$. We have
  $f(A/K,\omega') = r^{-k} f(\gamma\tau)$. Using the bottom line of the diagram,
  we find that
  \[
    \omega(\tau) = \det(\delta_1^*\tau) \cdot
    \det\paren[\big]{\delta_2^*(\Delta\delta_1\tau)} \cdot
    (\zeta_2\circ\xi\circ\zeta_1)^*\,\omega(\gamma\tau).
    \]
    We can rewrite this equality using the following cocycle relation:
    \[
    \gamma^*\tau = (\delta_2\Delta\delta_1)^*\tau = \delta_2^*(\Delta\delta_1\tau) \cdot \Delta^*(\delta_1\tau) \cdot \delta_1^*\tau.
    \]
    As $\det \Delta^*(\delta_1\tau) = \ell^d$, we have
    \[
    \omega(\tau) = \ell^{-d} \det(\gamma^*\tau) \cdot (\zeta_2\circ\xi\circ\zeta_1)^*\,
    \omega(\gamma\tau)
  \]
  We deduce that
  $\omega = r\, \ell^d \det(\gamma^*\tau)^{-1} \eta_1^*\omega(\tau)$ as
  differential forms on~$A$, and thus
  $\lambda = \pm r^{-1} \ell^{-d} \det(\gamma^*\tau)$. From~\eqref{eq:Nfgamma},
  we finally obtain
  \[
    N(f,\gamma) = 12^k r^{-k} f(\gamma\tau) = 12^k f(A/K,\omega'). \qedhere
  \]
\end{proof}

\begin{thm}
  \label{thm:integers}
  Let~$A$ be a p.p.~abelian surface defined over~$\Q$, let~$\tau\in \Half_2$ be
  a period matrix of~$A$, let~$\ell$ be a prime, and let $i\in\{1,2\}$.  For a
  Siegel modular form~$f$ on~$\Half_2$ of even weight with integral Fourier
  coefficients and~$\gamma\in S(\ell^i)$ (cf.~\S \ref{sub:hecke}), we
  define~$N(f,\gamma)$ as in equation~\eqref{eq:Nfgamma}. Then
  \begin{enumerate}
  \item \label{it:thm-1} For each such modular form~$f$, the
    set~$\{N(f,\gamma): \gamma\in S(\ell^i)\}$ is a Galois-stable set of
    algebraic integers.  Moreover, the action of~$\Gal(\Qbar/\Q)$ on this set
    corresponds to the Galois action on subgroups of~$A[\ell^i]$ via the
    correspondence of \cref{prop:hecke}.
  \item \label{it:thm-2} If~$\gamma\in S(\ell^i)$ corresponds to a subgroup~$K$
    of~$A$ that is defined over~$\Q$, then
    \[
      \paren[\big]{N(M_4,\gamma):N(M_6,\gamma):N(M_{10},\gamma): N(M_{12},\gamma)} \in \P^{4,6,10,12}(\Q)
    \]
    are the modular invariants of the quotient~$A/K$ in the sense of \S
    \ref{sub:siegel-mf}.
  \end{enumerate}
\end{thm}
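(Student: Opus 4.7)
The key is to leverage \cref{lem:Nfgamma-algebraic}, which gives the algebraic identity $N(f,\gamma) = 12^k f(A/K, \omega')$, where $K\subset A[\ell^i]$ is the subgroup associated with $\gamma$ by \cref{prop:hecke} and $\omega'$ is the basis of $\wedge^2 \Omega^1(A/K)$ pulling back to $\omega$ along $A\to A/K$. My proof of part~\eqref{it:thm-2} will invoke this directly: when $K$ is $\Q$-rational, both $A/K$ and $\omega'$ are defined over $\Q$, so $M_j(A/K,\omega') \in \Q$ and $N(M_j,\gamma) = 12^j M_j(A/K,\omega') \in \Q$ for $j\in\{4,6,10,12\}$. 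The homogeneity relation $f(B, t\omega) = t^{-k}f(B,\omega)$ then shows that the weighted projective tuple $(N(M_4,\gamma):\cdots:N(M_{12},\gamma))$ coincides with $\bigl(M_4(A/K, \tfrac{1}{12}\omega'):\cdots:M_{12}(A/K, \tfrac{1}{12}\omega')\bigr) \in \P^{4,6,10,12}(\Q)$, which is precisely the tuple of modular invariants of $A/K$ from \S\ref{sub:siegel-mf}.

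For part~\eqref{it:thm-1}, I would first address Galois stability using the same algebraic identity. The absolute Galois group $\Gal(\Qbar/\Q)$ permutes the subgroups $K\subset A[\ell^i]$ of the given type, hence permutes $S(\ell^i)$ via \cref{prop:hecke}. Given $\sigma \in \Gal(\Qbar/\Q)$ with $\sigma(K) = K^\sigma$ indexed by $\gamma^\sigma$, applying $\sigma$ to the identity yields $\sigma\bigl(N(f,\gamma)\bigr) = 12^k f(A/K^\sigma, (\omega')^\sigma)$; a short check shows that $(\omega')^\sigma$ is the pullback of $\omega$ along $A\to A/K^\sigma$ (using that $\omega$ is $\Q$-rational), so this equals $N(f, \gamma^\sigma)$. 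This simultaneously gives the permutation action on the set and the compatibility with the Galois action on subgroups.

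The hard part will be integrality. By \cref{prop:int-gens}, $12^k f \in \Z[M_4, M_6, M_{10}, M_{12}]$, so $N(f,\gamma) = 12^k f(A/K, \omega')$ becomes an integer-coefficient polynomial expression in the values $M_j(A/K, \omega')$ for $j\in\{4,6,10,12\}$. It therefore suffices to prove that each $M_j(A/K, \omega')$ is an algebraic integer. My plan is to invoke the integral theory of Siegel modular forms \`a la Faltings--Chai: the generators $M_j$, having integer Fourier coefficients, define global sections of powers of the Hodge bundle on the integral model of $\A_2$ over $\Z$. The normalization $M_j(A,\omega) = m_j \in \Z$ translates to $\omega$ generating the Hodge bundle on the N\'eron model of $A$, and pulling back along $A\to A/K$ yields an integral basis $\omega'$ on the N\'eron model of $A/K$ away from $\ell$. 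The factor $(\ell^d \det(\gamma^*\tau)^{-1})^k$ built into the definition of $N(f,\gamma)$ is designed to absorb the potential denominators at primes above $\ell$, mirroring the denominator analysis of modular equations carried out in \cite{KiefferEvaluation}.
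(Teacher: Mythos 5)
Your treatment of part~\eqref{it:thm-2} and of the Galois-stability assertion in part~\eqref{it:thm-1} is correct and coincides with the paper's: both are read off directly from \cref{lem:Nfgamma-algebraic}. The integrality claim is where you diverge, and your route has a genuine gap. The problematic step is the assertion that the reduced normalization $M_j(A,\omega)=m_j$ forces $\omega$ to generate the Hodge bundle on the N\'eron model of~$A$. It does not: reducedness only rules out $\omega$ being $p^{-1}$ times an integral section, and is perfectly compatible with $\omega$ being $p$ times a generator at a prime where $M_4,M_6,M_{10},M_{12}$ all vanish simultaneously modulo~$p$ to sufficiently high order. Such simultaneous vanishing is possible precisely because these four forms do not generate the ring of integral modular forms (Igusa needs fourteen generators) --- this is the entire reason the factors $12^k$ appear in \cref{prop:int-gens} and in the definition of $N(f,\gamma)$. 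Consequently your intermediate target, that each $M_j(A/K,\omega')$ is an algebraic integer, is unsubstantiated and may fail at $p\in\{2,3\}$; the theorem only yields integrality of $12^jM_j(A/K,\omega')$. Two further issues: the factor $(\ell^d\det(\gamma^*\tau)^{-1})^k$ is not a spare quantity available to absorb denominators at~$\ell$ --- it is exactly consumed by \cref{lem:Nfgamma-algebraic} in converting $\lambda^k f(\gamma\tau)$ into $f(A/K,\omega')$ --- and at primes of bad reduction the N\'eron model is not an abelian scheme, so the Faltings--Chai extension argument would require the semi-abelian/compactified formalism, which you do not address.

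The paper avoids all of this with a purely formal argument: it forms the monic polynomial $P(X)=\prod_{\gamma\in S(\ell^i)}\paren[\big]{X-N(f,\gamma)}$ and observes that the coefficient of $X^{n-j}$ equals $(12\lambda)^{kj}g_j(\tau)$, where $g_j$ is a level-one Siegel modular form of weight $kj$ with integral Fourier coefficients (a symmetric function of the translates of $f$ under $S(\ell^i)$, as in the classical theory of Hecke operators). By \cref{prop:int-gens} and the normalization $\lambda^k M_k(\tau)=m_k\in\Z$, each coefficient lies in $\Z[m_4,m_6,m_{10},m_{12}]=\Z$, so $P$ is monic with integer coefficients and the $N(f,\gamma)$ are algebraic integers. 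If you wish to pursue your geometric route, you would need to quantify the failure of $\omega$ to generate the Hodge bundle at $2$ and $3$ and check that the $12^j$ clear the resulting denominators --- at which point you would essentially be reproving \cref{prop:int-gens} in geometric language.
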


\begin{proof}
  By \cref{lem:Nfgamma-algebraic}, the complex numbers $N(f,\gamma)$ are in
  fact algebraic, form a Galois-stable set, and the action of $\Gal(\Qbar/\Q)$
  on them corresponds to the Galois action on subgroups of~$A$. We now show
  that the~$N(f,\gamma)$ are algebraic integers. To this end, we construct the
  monic polynomial
  \begin{displaymath}
    P(X) = \prod_{\gamma\in S(\ell^i)} \paren[\big]{X - N(f,\gamma)}.
  \end{displaymath}
  Let $n = \deg(P) = \#S(\ell^i)$.  Expanding the product, we observe as
  in~\cite[Prop.\ 2.4]{KiefferEvaluation} that for each $0\leq j\leq n$, the
  coefficient of~$X^{n-j}$ in $P$ takes the form
  \begin{displaymath}
    (12\lambda)^{kj} \cdot g_j(\tau),
  \end{displaymath}
  where~$g_j$ is a Siegel modular form of weight~$kj$ with integral Fourier
  coefficients.  By \cref{prop:int-gens}, the modular form $12^{kj}g_j$ is an
  element of~$\Z[M_4,M_6,M_{10},M_{12}]$ of weight~$kj$.  Therefore
  \[
    (12\lambda)^{kj}g_j(\tau) \in \Z[m_4,m_6,m_{10},m_{12}] = \Z.
  \]
  Thus~$P$ has integral coefficients, which
  concludes the proof of~\eqref{it:thm-1}.

  Item~\eqref{it:thm-2} is a direct consequence of
  \cref{lem:Nfgamma-algebraic}: we showed the existence of a basis $\omega$ of
  the $\Q$-vector space $\wedge^2\Omega^1(A/K)$ such that
  $N(M_k,\gamma) = 12^k M_k(A/K,\omega)$ for all $k\in \{4,6,10,12\}$, and
  absorbing the scalar factors $12^k$ does not modify the projective point.
\end{proof}

\subsection{Outline of the algorithm}
\label{sub:outline}

The theoretical background being set, we describe the concrete computations
over~$\C$ that we perform to list the rational 1- or 2-step isogenies from a
given typical p.p.~abelian surface~$A$ over~$\Q$. We take the modular
invariants of~$A$ as input. If~$A$ is given as the Jacobian of a genus~$2$
curve~$C$ over~$\Q$ as in \cref{alg:isog-class}, then its modular invariants
can be determined by polynomial formulas in terms of the coefficients of~$C$,
as explained in~\S\ref{sub:siegel-mf}. Similarly, the output will consists of
modular invariants of isogenous abelian surfaces: we refer to \cref{sec:curves}
for the subsequent reconstruction of genus~$2$ curve equations.

In the whole algorithm, we use interval arithmetic to keep track of precision
losses: each complex number is encoded as a ball that provably
contains the exact value. Of course, we need to assume more than mere
correctness of the underlying arithmetic for our algorithms to work, as we will
not get far if every operation returns the whole of~$\C$. A minimal assumption
in this subsection is that any given computation with exact input will output a
complex ball whose radius tends to zero as its \emph{working precision} tends
to infinity. This will ensure that our algorithms terminate. In fact, we have a
precise control on the precision losses incurred during the computations: we
postpone this discussion to the complexity analysis
in~\S\ref{sub:implementation}.

There are two main stages in the algorithm. First, we use low-precision
computations to filter out symplectic matrices~$\gamma\in S(\ell)$
or~$S(\ell^2)$ whose associated~$N(f,\gamma)$, seen as a ball, does not contain
any rational integer. By \cref{thm:integers}, these matrices~$\gamma$ do not
correspond to rational isogenies with domain~$A$. One expects that all
remaining subgroups correspond to rational isogenies. Second, we use
high-precision computations to either certify or disprove that the remaining
subgroups are defined over~$\Q$. We will use two black boxes, namely the
computation of a period matrix from modular invariants and the evaluation the
Siegel modular forms~$M_k$ at a point of~$\Half_g$; we also refer
to~\S\ref{sub:implementation} for a discussion of these steps. In the first
stage, we proceed as follows.

\begin{algorithm} \label{alg:cplx-1}
  {\em Input:}
  \begin{itemize}
  \item the modular invariants $m_4,m_6,m_{10},m_{12}$ of a p.p.~abelian surface~$A$ over~$\Q$,
  \item a prime $\ell$, and $i\in\{1,2\}$ indicating either 1-step
    or 2-step isogenies.
  \end{itemize}
  {\em Output:}
  \begin{itemize}
  \item a period matrix~$\tau$ of~$A$ (to low precision),
  \item a list~$L$ of symplectic matrices that provably contains
    all~$\gamma\in S(\ell^i)$ associated with a rational subgroup
    of~$A[\ell^i]$ for this choice of~$\tau$,
  \item for each~$\gamma\in L$ and $k\in\{4,6,10,12\}$, a candidate value
    $m_k'(\gamma)\in \Z$ for the algebraic integer $N(M_k,\gamma)$ satisfying $\abs{N(M_k,\gamma) - m_k'(\gamma)}\leq 2^{-10}$;
  \item for each $\gamma\notin L$ and $k\in\{4,6,10,12\}$, a
    ball containing $N(M_k,\gamma)$ of radius at most $2^{-10}$.
  \end{itemize}
  \begin{enumerate}[label={\bf Step \arabic*.},labelwidth=\widthof{{\bf Step 2.}},leftmargin=!]
  \item Compute a low-precision approximation of a period matrix~$\tau$ of~$A$, see Theorem \ref{thm:period-matrix}.
  \item Evaluate the modular forms~$M_4,\ldots,M_{12}$ at~$\tau$, and deduce an
    approximation of the scaling factor~$\lambda$ as defined
    in~\S\ref{sub:certification}.
  \item For each~$\gamma\in S(\ell^i)$ and each~$k\in \{4,6,10,12\}$,
    evaluate~$N(M_k, \gamma)$. If the radius of one of these approximations is
    larger than~$2^{-10}$, double the working precision and restart.
  \item Let~$L$ be the list of all~$\gamma$ such that each of the
    balls~$N(M_k,\gamma)$ for $k\in \{4,6,10,12\}$ contains an integer
    $m_k'(\gamma)$, and return the output listed above.
  \end{enumerate}
\end{algorithm}

At the end of \cref{alg:cplx-1}, for each~$\gamma\in L$, the
integers~$m_k'(\gamma)$ are the only possible values for the modular invariants
of the isogenous abelian surface if it is indeed defined over~$\Q$. One
could stop here and certify the existence of an isogeny by other methods. In
order to certify that these likely isogenies indeed exist using computations
over~$\C$ only, we proceed as follows.

\begin{lem}
  \label{lem:certification}
  Keep the notation of \cref{alg:cplx-1}. Let~$k\in \{4,6,10,12\}$, let
  $m\in \Z$, and let $S_0\subset S(\ell^i)$ be a nonempty subset such that
  $\abs{N(M_k,\gamma)- m} \leq 1$ for all~$\gamma\in S_0$, and
  $N(M_k,\gamma)\neq m$ for all $\gamma\in S(\ell^i)\backslash S_0$. Let
  \[
    B = \prod_{\gamma\in S(\ell^i)\backslash S_0} \abs{N(M_k,\gamma) - m}.
  \]
  Then for each $\gamma\in S_0$, the inequality
  $\abs{N(M_k,\gamma) - m} < \displaystyle\smash{\frac{1}{B}}$ holds if and only if
  $N(M_k,\gamma) = m$.
\end{lem}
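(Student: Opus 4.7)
The plan is to leverage the monic integer polynomial
\[
P(X) = \prod_{\gamma\in S(\ell^i)}\bigl(X - N(M_k,\gamma)\bigr) \in \Z[X],
\]
whose integrality is established in the proof of \cref{thm:integers}. The ``if'' direction of the equivalence is immediate: $N(M_k,\gamma) = m$ forces $|N(M_k,\gamma) - m| = 0 < 1/B$, and $B$ is a positive real number because every factor in its defining product is nonzero by hypothesis.

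For the ``only if'' direction I would argue the contrapositive, fixing $\gamma_0\in S_0$ with $N(M_k,\gamma_0) \neq m$ and showing $|N(M_k,\gamma_0) - m|\geq 1/B$. The key step is to factor out the roots of $P$ equal to $m$: letting $r\geq 0$ denote the multiplicity of $m$ as a root of $P$, one writes $P(X) = (X - m)^r P_0(X)$ with $P_0(m)\neq 0$. Since $m$ is a rational integer and $P$ is monic in $\Z[X]$, the quotient $P_0$ is again monic with integer coefficients, so $|P_0(m)|\geq 1$. I would then expand $|P_0(m)|$ as the product of $|N(M_k,\gamma) - m|$ over those $\gamma\in S(\ell^i)$ for which $N(M_k,\gamma) \neq m$, and split this product into three pieces: the contribution from $\gamma_0$ itself, equal to $|N(M_k,\gamma_0) - m|$; the contribution from the remaining $\gamma\in S_0$ with $N(M_k,\gamma)\neq m$, each factor being at most $1$ by the standing hypothesis on $S_0$; and the contribution from $\gamma\notin S_0$, which is exactly $B$. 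Combining yields $1\leq |P_0(m)| \leq |N(M_k,\gamma_0) - m|\cdot B$, giving the desired bound.

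The main subtlety to be careful about is the passage from $P$ to $P_0$: a naive argument using $P$ itself would break down precisely in the interesting case, since $P(m) = 0$ as soon as some $N(M_k,\gamma) = m$ for a $\gamma\in S_0$, and then the integer polynomial bound $|P(m)|\geq 1$ would fail. Extracting the factor $(X-m)^r$ produces a polynomial whose value at $m$ has absolute value at least one, so the integrality bound survives regardless of whether $m$ appears among the $N(M_k,\gamma)$. Once this observation is in place, the rest of the proof is a routine product estimate.
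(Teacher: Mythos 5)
Your proof is correct and follows essentially the same route as the paper: both arguments reduce to showing that the product of the nonzero quantities $N(M_k,\gamma)-m$ is a nonzero rational integer, hence of absolute value at least $1$, and then bound the remaining factors from $S_0$ by $1$. The paper extracts this integer as the lowest nonvanishing coefficient of $\prod_\gamma\bigl(X-N(M_k,\gamma)+m\bigr)\in\Z[X]$, whereas you obtain it as $P_0(m)$ after dividing out $(X-m)^r$; these are the same computation in two guises.
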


\begin{proof}
  Let~$S_1\subset S_0$ be the subset of matrices~$\gamma$ such that the
  equality $N(M_k,\gamma) = m$ holds, and consider the polynomial
  \[
    P = \prod_{\gamma\in S(\ell^i)} \paren[\big]{X - N(M_k,\gamma) + m} \in
    \Z[X], \quad \text{by \cref{thm:integers}}.
  \]
  The coefficient of $X^j$ in~$P$ is zero for $0\leq j\leq \#S_1 - 1$, and the
  coefficient of $X^{\#S_1}$ is, up to sign,
  \[
    \prod_{\gamma\in S(\ell^i)\backslash S_0} \paren[\big]{N(M_k,\gamma) - m} \cdot
    \prod_{\gamma\in S_0\backslash S_1} \paren[\big]{N(M_k,\gamma) - m}.
  \]
  By construction, this integer is nonzero, hence at least~$1$ in absolute
  value. Thus, for every~$\gamma\in S_0\backslash S_1$, we have as claimed
  \[
    \abs{N(M_k,\gamma)- m} \geq \frac1B.
    \qedhere
  \]
\end{proof}

\begin{algorithm}
  \label{alg:cplx-2}
  {\em Input:}
  the input and output of~\cref{alg:cplx-1}. \\
  {\em Output:} the modular invariants of all the p.p.~abelian surfaces linked
  to~$A$ by an $i$-step $\ell$-isogeny, as a list of integer-valued
  tuples~$(m_4',m_6',m_{10}',m_{12}')$.
  \begin{enumerate}[label={\bf Step \arabic*.},labelwidth=\widthof{{\bf Step 2.}},leftmargin=!]
  \item For each~$\gamma_0\in L$ and $k\in \{4,6,10,12\}$, let~$S_0\subset L$
    to be the set of all matrices~$\gamma$ such that
    $m_k'(\gamma) = m_k'(\gamma_0)$, and compute a low-precision upper bound
    for the quantity~$B$ as in \cref{lem:certification}. Let~$B_0$ be their
    maximum value ranging over all~$\gamma_0$ and~$k$.
  \item Choose a higher working precision (more on this below). Recompute the
    period matrix~$\tau$, as well as~$N(M_k,\gamma)$ for each~$\gamma\in L$
    and~$k\in\{4,6,10,12\}$.
  \item For each~$\gamma\in L$ and $k\in \{4,6,10,12\}$, check
    whether~$N(M_k,\gamma)$ still contains the candidate value~$m_k'(\gamma)$.
    If not, remove~$\gamma$ from~$L$. If yes, check whether the inequality
    \[
      \abs{N(M_k,\gamma) - m_k'(\gamma)} < \frac{1}{B_0}
    \]
    holds; if this cannot be decided, double the working precision and go back
    to Step~2.
  \item Output the list of tuples $(m_4'(\gamma),\ldots,m_{12}'(\gamma))$ for
    the remaining~$\gamma\in L$.
  \end{enumerate}
\end{algorithm}

As soon as all the radii of the balls containing $N(M_k,\gamma)$ are less than
$\frac{1}{2B_0}$, there will be no further doubling of the working precision
in~Step~3, thus \cref{alg:cplx-2} terminates. \Cref{lem:certification}
guarantees that the algorithm is correct.

\begin{remark}
  In practice, increasing the working precision by~$p$ bits results in output
  intervals whose radius is multiplied by roughly $2^{-p}$. Therefore a good
  guess for the choice of high precision in Step~2 of \cref{alg:cplx-2} is to
  add $\ceil{\log_2(B_0)}$ bits to the current working precision at the end of
  \cref{alg:cplx-1}. We then expect that no further precision increases are
  needed in Step~3.
\end{remark}

\subsection{Implementation details and complexity analysis.}
\label{sub:implementation}

We conclude this section with a complexity analysis of the isogeny algorithm in
terms of~$\ell$,~$i$ and the height of the integers~$m_k$ for
$k\in\{4,6,10,12\}$. First, we give an asymptotic upper bound on the
absolute values $\abs{N(M_k,\gamma)}$: this will specify the necessary working
precisions in \cref{alg:cplx-1,alg:cplx-2}. Then, we review previous works on
the computation of period matrices and the evaluation of modular forms in
quasi-linear time in terms of the required precision, filling in the two black
boxes of \S\ref{sub:outline}.

We may fix a compact subset $\mathcal{G}\subset \Half_2\cup\{\infty\}$, where
$\infty$ denotes the cusp at infinity, such that the period matrix~$\tau$ in
Step~1 of \cref{alg:cplx-1} always belongs to~$\mathcal{G}$. For instance, we
can take~$\mathcal{G}$ to be the set of points at a distance at most~$2^{-10}$
from the Siegel fundamental domain~$\mathcal{F}$ (see \cite[\S6.2]{Streng}),
plus the point~$\infty$.  Recall that the modular forms~$M_k$ are bounded in a
neighborhood of~$\infty$, never vanish simultaneously on~$\Half_2$, and
that~$M_4$ and~$M_6$ take the value~$1$ at~$\infty$. Thus there exist two
absolute constants $0 < C_1 < C_2 < +\infty$ such that $M_k\leq C_2$ uniformly
on~$\mathcal{G}$ for each~$k\in\{4,6,10,12\}$, and
$\max\bigl\{\abs{M_k(\tau)}: k\in \{4,6,10,12\}\bigr\} \geq C_1$ for
each~$\tau\in \mathcal{G}$.

\begin{lem}
  \label{lem:size-estimates}
  In \cref{alg:cplx-1}, assume that~$\tau\in \mathcal{G}$, and let
  \[
  h = \log\max\{\abs{m_4},\ldots, \abs{m_{12}}\}.
  \]
  Then in Step~2 of that
  algorithm, we have $\log\,\abs{\lambda} = O(h)$. In Step~3, we have
  $\log\,\abs{N(M_k,\gamma)} = O(h + \log\ell)$ for each~$\gamma\in S(\ell^i)$
  and $k\in \{4,6,10,12\}$. Both implied constants are absolute.
\end{lem}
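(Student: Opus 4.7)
The plan is to establish the two estimates in sequence, using the defining equation $\lambda^k M_k(\tau) = m_k$ for the first and the formula for $N(M_k,\gamma)$ for the second.

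For the bound on $\lambda$: by the properties of $\mathcal{G}$ stated just before the lemma, $|M_k(\tau)| \leq C_2$ for all $k\in\{4,6,10,12\}$ and at least one $k_0$ satisfies $|M_{k_0}(\tau)| \geq C_1$. Taking $k^*$ with $|m_{k^*}| = e^h$, the equation $\lambda^{k^*} M_{k^*}(\tau) = m_{k^*}$ yields $|\lambda|^{k^*} \geq e^h/C_2$, while the equation for $k_0$ yields $|\lambda|^{k_0} \leq e^h/C_1$. Since $4 \leq k_0, k^* \leq 12$, both bounds combine to give $\log|\lambda| = \Theta(h)$, and in particular $\log|\lambda| = O(h)$.

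For $N(M_k,\gamma)$, I would split the defining formula as a product of three factors,
\[
|N(M_k,\gamma)| = (12|\lambda|)^k \cdot \bigl|\ell^d \det(\gamma^*\tau)^{-1}\bigr|^k \cdot |M_k(\gamma\tau)|,
\]
and estimate each separately. The first factor is $e^{O(kh)} = e^{O(h)}$ from Step 1 and $k\leq 12$. For the second factor, each $\gamma\in S(\ell^i)$ has zero lower-left block, so $\gamma^*\tau$ equals the lower-right block of $\gamma$ and is independent of $\tau$; inspecting the explicit matrices in~\S\ref{sub:hecke} shows $\det(\gamma^*\tau)$ is an integer power of $\ell$ with bounded exponent, making this factor at most $\ell^{O(k)} = e^{O(\log\ell)}$.

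The third factor $|M_k(\gamma\tau)|$ is the technical heart of the proof, as $\gamma\tau$ is generally far from $\mathcal{F}$ and no globally bounded Petersson-type invariant is available for the non-cusp forms $M_4, M_6$. I would reduce $\gamma\tau$ to the fundamental domain: write $\gamma\tau = \delta\tau''$ with $\delta\in\Sp(4,\Z)$ and $\tau''\in\mathcal{F}$. Since $|M_k(\tau'')|\leq C_2$ on $\mathcal{F}$ (including near the cusp), one obtains
\[
|M_k(\gamma\tau)| = |\det(\delta^*\tau'')|^k |M_k(\tau'')| \leq C_2 \bigl(\det\im(\tau'')/\det\im(\gamma\tau)\bigr)^{k/2}.
\]
The main obstacle is now to show $\det\im(\tau'')/\det\im(\gamma\tau) = \ell^{O(1)}$. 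This should follow from the identity $\im(\gamma\tau) = \ell^{2i}(\gamma^*\tau)^{-t}\im(\tau)(\gamma^*\tau)^{-1}$, together with two-sided bounds $\|\gamma^*\tau\|, \|(\gamma^*\tau)^{-1}\| \leq \ell^{O(1)}$ coming from the explicit shape of $S(\ell^i)$ (bounded entries and determinant a bounded power of $\ell$) and the uniform positive-definiteness of $\im(\tau)$ on $\mathcal{G}$, all forcing the eigenvalues of $\im(\gamma\tau)$ into $[\ell^{-O(1)}, \ell^{O(1)}]$; standard Siegel reduction theory then confines $\det\im(\tau'')$ to the same range. Combining the three factor estimates yields $\log|N(M_k,\gamma)| = O(h + \log\ell)$ as claimed, with an absolute implied constant.
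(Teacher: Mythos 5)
Your proposal follows the same route as the paper's proof: bound $\lambda$ from the two-sided bounds on the $M_k$ over $\mathcal{G}$, split $N(M_k,\gamma)$ into the three factors, note that the middle factor $\ell^d\det(\gamma^*\tau)^{-1}$ is a bounded power of $\ell$ read off from the explicit cosets, and control $\abs{M_k(\gamma\tau)}$ by reducing $\gamma\tau$ into the fundamental domain and bounding the automorphy factor through the ratio of the two $\det\im$'s. The paper implements the last step via the block identity $\im(\gamma\tau)=a\,\im(\tau)\,d^{-1}$, which yields the lower bound $\det\im(\gamma\tau)\geq C_3/\ell^2$, and then cites Klingen to get $\abs{r}^{-2}=\det\im(\eta\gamma\tau)/\det\im(\gamma\tau)\leq C_4\ell^2$.

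One intermediate claim of yours is false as stated, though the argument is repairable: the eigenvalues of $\im(\gamma\tau)$, and a fortiori $\det\im(\tau'')$, are \emph{not} confined to $[\ell^{-O(1)},\ell^{O(1)}]$. The set $\mathcal{G}$ contains a neighborhood of the cusp $\infty$, where $\im(\tau)$ is unbounded above; even on the bounded part one only has $\abs{\tau}=O(h)$, so there is no ``uniform'' upper bound independent of $h$. What is true, and what both your argument and the paper's actually use, is the one-sided estimate $\lambda_{\min}(\im(\gamma\tau))\geq \ell^{-O(1)}$ (at the determinant level, $\det\im(\gamma\tau)\geq C_3/\ell^2$); reduction theory then bounds the \emph{ratio} $\det\im(\tau'')/\det\im(\gamma\tau)$ by $\ell^{O(1)}$, and the absolute size of $\det\im(\tau'')$ is irrelevant because it cancels in that ratio. (Also, the similitude of $\gamma\in S(\ell^i)$ is $\ell^i$, not $\ell^{2i}$, but this does not affect the $\ell^{O(1)}$ conclusion.) With these corrections your argument coincides with the paper's.
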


\begin{proof}
  For the first part, we have $\log\,\abs{\lambda} \leq \frac14(h - \log
  C_1)$. For the second part, we need to estimate~$\abs{M_k(\gamma\tau)}$. The
  matrix~$\gamma\tau\in \Half_2$ will not usually belong to~$\mathcal{G}$, but
  its imaginary part is nevertheless ``bounded below'': if~$a,b,0,d$ denote the
  $2\times 2$ blocks of~$\gamma$, we have
  $\im(\gamma\tau) = a \im(\tau) d^{-1}$, so
  $\det \im(\gamma\tau) \geq C_3/\ell^2$ where~$C_3$ is an absolute constant. Let
  now~$\eta\in \Sp(4,\Z)$ be such that~$\eta\gamma\tau\in \mathcal{G}$, and
  let~$r = \det(\eta^*(\gamma\tau))$. Then by \cite[Proof of
  Prop.~1.1]{Klingen}, we have
  \[
    \det(\im(\eta\gamma\tau)) = \abs{r}^{-2}\det(\im(\gamma\tau))
  \]
  so $\abs{r}^{-2} \leq C_4 \ell^2$, where~$C_4$ is an absolute constant. Finally we have
  \[
    \log\,\abs{M_k(\gamma\tau)} = \log\,\abs{r^{-k} M_k(\eta\gamma\tau)} = O(\log\ell)
  \]
  where the implied constant is absolute.
\end{proof}

With these estimates in hand, we focus on the two key computations left aside
in~\S\ref{sub:outline}. From the data of modular invariants in~$\Z$, a
corresponding period matrix~$\tau\in\mathcal{F}$ can be efficiently computed
using arithmetic-geometric means, an algorithm described
in~\cite[Chap.~9]{Dupont} and proved correct in~\cite{KiefferSignChoices}. (In
the first low-precision step, we could also use numerical
integration~\cite{MolinNeurohr}, but the dependency of this algorithm on~$h$
has not been made explicit.)

\begin{thm}[{\cite[\S4.1]{KiefferEvaluation}}]
  \label{thm:period-matrix}
  There exists an algorithm
  which, given the modular invariants $m_4,\ldots,m_{12}\in\Z$ of a typical
  p.p.~abelian surface~$A$ over~$\Q$ and $p\geq 1$, computes an approximation
  of a period matrix~$\tau\in\mathcal{F}$ of~$A$ up to an error of~$2^{-p}$
  within a running time of $\Otilde(p+h)$ binary operations, where
  $h = \log\max\{\abs{m_4},\ldots, \abs{m_{12}}\}$.
\end{thm}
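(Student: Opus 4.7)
The plan is to invert the map
\[
\Phi \colon \tau \longmapsto \bigl(M_4(\tau),M_6(\tau),M_{10}(\tau),M_{12}(\tau)\bigr)
\]
from Siegel period matrices in $\mathcal{F}$ to modular invariants, via Newton iteration built on top of a quasi-linear evaluator for the $M_k$ themselves.

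First I would provide such an evaluator. Each $M_k$ is an integer polynomial in the ten Thetanullwerte at $\tau$, and Dupont's genus-two generalization of the arithmetic-geometric mean (\cite[Chap.~9]{Dupont}, with certified sign choices in~\cite{KiefferSignChoices}) evaluates all ten of these constants at any reduced $\tau$ to $p$ bits of accuracy in $\Otilde(p)$ binary operations, thanks to the quadratic convergence of the Borchardt mean. Differentiating, or taking finite differences of, this Borchardt evaluator gives the Jacobian of $\Phi$ at the same cost, which is precisely what a Newton step requires.

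Second I would produce an initial approximation $\tau_0 \in \mathcal{F}$ inside the basin of attraction of the true period matrix. Mestre's algorithm applied to $(m_4,\ldots,m_{12})$ produces a hyperelliptic model $y^2 = f(x)$ of height $O(h)$ with the prescribed Igusa--Clebsch invariants, and numerical integration of its period integrals~\cite{MolinNeurohr} followed by Siegel reduction isolates such a $\tau_0$. I would then refine $\tau_0$ to precision $2^{-p}$ by Newton iteration on $\Phi$ against the target value $(\lambda^4 m_4,\ldots,\lambda^{12} m_{12})$, modulo the projective rescaling $\lambda\in\C^\times$. Because $A$ is typical, $\Phi$ is a local diffeomorphism at the true period matrix, so each Newton step squares the error; running each step at just above the current working precision and summing geometrically, the refinement costs $\Otilde(p)$.

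The main obstacle is to keep the initialization genuinely $\Otilde(h)$ rather than polynomial in $h$, since off-the-shelf numerical integration of a height-$h$ hyperelliptic model runs in polynomial time in $h$. The resolution, spelled out in~\cite[\S4.1]{KiefferEvaluation}, is to apply the Newton doubling scheme uniformly from a constant-precision guess, so that all intermediate working precisions, in particular those of order $h$ needed to distinguish the correct $\Sp(4,\Z)$-orbit, are absorbed into the same telescoping geometric sum, yielding the claimed total cost $\Otilde(p+h)$.
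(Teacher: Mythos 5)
The paper does not actually prove this statement: it is imported wholesale from \cite[\S4.1]{KiefferEvaluation}, and the surrounding discussion makes clear which algorithm is meant, namely that of \cite[Chap.~9]{Dupont} (made rigorous in \cite{KiefferSignChoices}). That algorithm reconstructs a curve equation from the invariants, computes the roots of the defining sextic to the working precision, obtains the theta quotients at $\tau$ \emph{directly} from Thomae's formulae with certified sign choices, and recovers $\tau$ by inverting the quadratically convergent Borchardt (genus-$2$ AGM) mean. Newton iteration appears only inside the theta machinery, not as an inversion of the map $\Phi\colon\tau\mapsto(M_4,\dots,M_{12})$. Your route --- Newton iteration on $\Phi$ itself, seeded by numerical integration of periods --- is a genuinely different strategy, and it is the initialization step that breaks it.

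Concretely, for invariants of height $h$ one has $\abs{\tau}=O(h)$ by \cite[Cor.~7.8]{Streng} (the paper quotes exactly this in its complexity analysis), so any $\tau_0$ lying in the Newton basin already carries $\Omega(h)$ bits of information; there is no ``constant-precision guess'' to start from, and the telescoping geometric sum controls only the cost of the \emph{refinement}, never of the localization. The only concrete source you offer for $\tau_0$ is numerical integration of the periods of the Mestre model, and the paper explicitly remarks that the dependency of \cite{MolinNeurohr} on $h$ has not been made quasi-linear --- which is precisely why the authors use the AGM-based algorithm instead. Your closing paragraph then resolves the gap by appealing to \cite[\S4.1]{KiefferEvaluation}, i.e.\ to the statement being proved, which is circular. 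Two smaller issues: $\Phi$ maps a $3$-dimensional domain to $\C^4$ and its target is only defined up to the weighted scaling $\lambda$, so Newton must be set up on a normalized system of absolute invariants rather than on $\Phi$ as written; and the assertion that typicality makes $\Phi$ a local diffeomorphism at $\tau$, while correct because typical surfaces have no extra automorphisms, is stated without justification.
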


In order to evaluate the modular forms~$M_k$ at a
point~$\gamma\tau\in \Half_2$, we compute a matrix~$\eta\in\Sp(4,\Z)$ such
that~$\eta\gamma\tau$ is close to the fundamental domain~$\mathcal{F}$ and
evaluate $M_k(\eta\gamma\tau)$. The latter can be rewritten as polynomials in
terms of theta constants at~$\eta\gamma\tau$: see~\cite[\S7.1]{Streng} for
explicit formulas. Following \cite[\S4.3]{KiefferEvaluation}, we can control the
cost of the reduction step in terms of the quantity
\[
  \Lambda(\gamma\tau) = \log\max\{2, \abs{\gamma\tau},
  \det(\im(\gamma\tau))^{-1}\},
\]
where~$\abs{\gamma\tau}$ denotes the largest absolute value of an entry
of~$\gamma\tau$. As a consequence of~\cite[Cor.~7.8]{Streng}, we
have~$\abs{\tau} = O(h)$, so~$\Lambda(\gamma\tau) = O(\log h + \log\ell)$ where
the implied constant is absolute. After the reduction step, the evaluation of
theta constants on~$\mathcal{F}$ can be done in uniform quasi-linear time using
arithmetic-geometric means and Newton's method. The following result summarizes
this approach.

\begin{thm}[{\cite[\S4.3]{KiefferEvaluation}}]
  \label{thm:theta}
  There exists an algorithm and an
  absolute constant~$C_5$ such that the following holds. Let~$\tau\in\Half_2$
  and~$p\geq 1$. Then, given an approximation of~$\tau$ to
  precision~$p+C_5\Lambda(\tau)$, the algorithm computes a
  matrix~$\gamma\in \Sp(4,\Z)$ such that
  $\log\,\abs{\gamma} = O(\Lambda(\tau))$, a matrix~$\tau'\in \mathcal{F}$ such
  that~$\abs{\tau'-\gamma\tau}\leq 2^{-p}$, and an approximation of the squares
  of theta constants at~$\gamma\tau$ up to an error of~$2^{-p}$. Its running
  time is $\Otilde(\Lambda(\tau)^2 + p)$ binary operations.
\end{thm}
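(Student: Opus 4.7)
The plan is to follow the two-stage strategy of Dupont and Kieffer. First, a reduction stage computes a symplectic matrix $\gamma \in \Sp(4,\Z)$ such that $\tau' \coloneqq \gamma\tau$ is close to the Siegel fundamental domain $\mathcal{F}$. Second, an evaluation stage computes the squares of theta constants at $\tau'$ via quadratically convergent Borchardt-type means combined with Newton's method. These outputs will then be refined together to guarantee the claimed precision $2^{-p}$.

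For the reduction, I would iterate three elementary operations: (a) Minkowski reduction of $\im(\tau)$ by an action of $\mathrm{GL}_2(\Z)$, (b) integral translation of $\re(\tau)$, and (c) the symplectic involution $\tau \mapsto -\tau^{-1}$ whenever $\det\im(\tau)$ falls below a fixed threshold. A standard analysis, following for instance \cite[\S6.2]{Streng} and the bounds in \cite{Klingen}, shows that each iteration strictly decreases a suitable height function comparable to $\Lambda(\tau)$, so that the process terminates in $O(\Lambda(\tau))$ steps with a composed matrix $\gamma$ satisfying $\log\,\abs{\gamma} = O(\Lambda(\tau))$. Each elementary step only requires $O(1)$ integer comparisons and complex ball arithmetic at working precision $O(\Lambda(\tau))$ to be carried out reliably, yielding a total cost of $\Otilde(\Lambda(\tau)^2)$ binary operations for the reduction.

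For the evaluation stage, once $\tau' \in \mathcal{F}$, the squares of even theta constants at $\tau'$ can be expressed in terms of two Borchardt means of carefully chosen seeds, as detailed in \cite[Chap.~9]{Dupont}. The Borchardt mean converges quadratically, so it reaches precision $p$ after $O(\log p)$ iterations each of arithmetic cost $\Otilde(p)$. The theta squares are then recovered by inverting a map whose Jacobian is invertible on $\mathcal{F}$, using Newton's method: each Newton step doubles the precision and requires one Borchardt evaluation, so the total cost is again $\Otilde(p)$. The correctness of these choices, including the sign choices in Newton's method, is established in \cite{KiefferSignChoices}.

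The main obstacle, and the reason for the assumption that $\tau$ be known to precision $p + C_5\Lambda(\tau)$, is that applying a symplectic transformation $\gamma$ with $\log\,\abs{\gamma} = O(\Lambda(\tau))$ to an approximation of $\tau$ amplifies errors by a factor of $2^{O(\Lambda(\tau))}$, since $\gamma\tau$ involves a matrix inversion $(c\tau+d)^{-1}$ whose condition number is controlled by $\abs{\gamma}$ and $\det\im(\tau)$. Budgeting an additional $C_5\Lambda(\tau)$ bits of input precision, with $C_5$ fixed explicitly from the analysis of steps (a)--(c) above, guarantees simultaneously that the final approximation of $\gamma\tau$ lies within $2^{-p}$ of the exact value and that the theta squares at $\gamma\tau$ are determined to error $2^{-p}$. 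Combining the two stages yields the announced bound $\Otilde(\Lambda(\tau)^2 + p)$.
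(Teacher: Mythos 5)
This theorem is not proved in the paper: it is imported verbatim from \cite[\S4.3]{KiefferEvaluation}, and the surrounding text only sketches the same two-stage strategy (Siegel reduction to $\mathcal{F}$, then theta constants via arithmetic--geometric/Borchardt means plus Newton, with the extra $C_5\Lambda(\tau)$ bits absorbing the error amplification of the symplectic action) that your proposal reproduces, so your outline matches the intended argument. One minor imprecision: in genus $2$ the reduction to $\mathcal{F}$ requires testing $\abs{\det(c\tau+d)}\geq 1$ against the full finite set of Gottschling matrices, not just the involution $\tau\mapsto-\tau^{-1}$ triggered by a threshold on $\det\im(\tau)$, though this does not change the complexity analysis.
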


We can now prove the complexity bound stated at the beginning of
\cref{sec:analytic}.

\begin{cor}
  \label{cor:complexity}
  Let~$A$ be a typical p.p.~abelian surface over~$\Q$ with modular invariants
  $m_4,\ldots,m_{12}$, and let
  $h = \log\max\{\abs{m_4},\ldots,\abs{m_{12}}\}$. Let~$\ell$ be a prime, let~$i\in\{1,2\}$, and let~$n$ be the size of
  the list~$L$ returned by \cref{alg:cplx-1} on this input. Then one can run
  \cref{alg:cplx-1,alg:cplx-2} to detect $i$-step $\ell$-isogenies with
  domain~$A$ using a total of $\Otilde\paren[\big]{(n+1)\ell^d h}$ binary
  operations, where~$d=3$ when~$i=1$ and~$d=4$ when~$i=2$.
\end{cor}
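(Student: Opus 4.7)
The plan is to bound separately the cost of \cref{alg:cplx-1} and \cref{alg:cplx-2}, and to add them. In both cases the dominant work consists of evaluating the modular forms~$M_k$ at the $\ell$-powers of period matrices $\gamma\tau$ for $\gamma\in S(\ell^i)$, so the argument rests on three ingredients: the cardinality $\#S(\ell^i) = O(\ell^d)$ (which is visible from the explicit enumeration in \S\ref{sub:hecke} and is precisely the reason for the value of~$d$); the size estimates of \cref{lem:size-estimates}; and the quasi-linear algorithms provided by \cref{thm:period-matrix,thm:theta}.

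\textbf{First}, I would analyze \cref{alg:cplx-1}. To certify that each ball $N(M_k,\gamma)$ contains at most one integer to within $2^{-10}$, it suffices to know $N(M_k,\gamma)$ at absolute precision $2^{-10}$; since $\log\abs{N(M_k,\gamma)} = O(h+\log\ell)$ by \cref{lem:size-estimates}, this requires a working precision of $p_0 = O(h+\log\ell)$ bits. I would then verify that the input precision needed by \cref{thm:theta} at each $\gamma\tau$ is $p_0 + C_5\Lambda(\gamma\tau)$, and control $\Lambda(\gamma\tau)$ uniformly in $\gamma \in S(\ell^i)$: the imaginary part estimate $\det \im(\gamma\tau) \geq C_3/\ell^2$ recalled in the proof of \cref{lem:size-estimates}, together with $\abs{\tau}=O(h)$ from \cite[Cor.~7.8]{Streng} and the explicit form of the matrices in $S(\ell^i)$, give $\Lambda(\gamma\tau) = O(\log h + \log \ell)$. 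Therefore one period matrix computation costs $\Otilde(p_0+h) = \Otilde(h+\log\ell)$ by \cref{thm:period-matrix}, and each of the $\#S(\ell^i) = O(\ell^d)$ evaluations costs $\Otilde(\Lambda(\gamma\tau)^2 + p_0) = \Otilde(h+\log\ell)$ by \cref{thm:theta}. Summing gives $\Otilde(\ell^d h)$ binary operations for \cref{alg:cplx-1}.

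\textbf{Second}, I would analyze \cref{alg:cplx-2}. The integer~$B_0$ is a product of at most $\#S(\ell^i) = O(\ell^d)$ factors, each of absolute value bounded by a quantity of logarithm $O(h+\log\ell)$, so $\log B_0 = O(\ell^d h)$. The high precision chosen in Step~2 is therefore $p_1 = O(\ell^d h)$ bits. Computing~$\tau$ to this precision costs $\Otilde(p_1+h) = \Otilde(\ell^d h)$ by \cref{thm:period-matrix}, and each of the $n$ evaluations of $M_k(\gamma\tau)$ for $\gamma\in L$ costs $\Otilde(\Lambda(\gamma\tau)^2+p_1) = \Otilde(\ell^d h)$ by \cref{thm:theta}, using the same uniform bound on $\Lambda(\gamma\tau)$ as above. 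The total is $\Otilde((n+1)\ell^d h)$. Combining with the bound $\Otilde(\ell^d h)$ from \cref{alg:cplx-1} yields the claimed complexity.

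\textbf{The main obstacle} is bookkeeping, in particular (a) confirming that no precision doubling in Step~3 of \cref{alg:cplx-2} is triggered more than $O(1)$ times on average (so that the analysis reflects the final working precision rather than the sum over all doublings), which follows from the remark after the algorithm: once the working precision exceeds $p_1$ by a constant, every ball has radius below $1/(2B_0)$ so Step~3 terminates; and (b) justifying the uniform bound $\Lambda(\gamma\tau) = O(\log h + \log\ell)$ over all $\gamma\in S(\ell^i)$, which uses the block-upper-triangular structure of the matrices in \S\ref{sub:hecke} together with $\tau\in\mathcal{G}$.
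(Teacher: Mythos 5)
Your proposal is correct and follows essentially the same route as the paper's proof: bound the required precisions via \cref{lem:size-estimates} (namely $O(h+\log\ell)$ bits in \cref{alg:cplx-1} and $O(\ell^d(h+\log\ell))$ bits in \cref{alg:cplx-2}, since $\log B_0 = O(\ell^d(h+\log\ell))$), then invoke \cref{thm:period-matrix,thm:theta} together with $\#S(\ell^i)=O(\ell^d)$ to get $\Otilde(\ell^d h)$ and $\Otilde(n\ell^d h)$ for the two stages. The extra bookkeeping you supply on the uniform bound $\Lambda(\gamma\tau)=O(\log h+\log\ell)$ and on the precision-doubling loop is consistent with what the paper establishes in \S\ref{sub:implementation} and the remark following \cref{alg:cplx-2}.
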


\begin{proof}
  By \cref{lem:size-estimates}, in Step~3 of \cref{alg:cplx-1}, we must
  evaluate~$M_k(\gamma\tau)$ to $O(h+\log\ell)$ bits of precision for
  each~$\gamma\in S(\ell^i)$. By \cref{thm:period-matrix,thm:theta}, this can
  be done within $\Otilde(\ell^dh)$ binary operations, as
  $\#S(\ell^i) = O(\ell^d)$. Similarly, in \cref{alg:cplx-2}, we have
  $\log\,\abs{B_0} = O(\ell^d(h+\log\ell))$, so we need to evaluate
  $M_k(\gamma\tau)$ to $O\paren[\big]{\ell^d(h+\log\ell)}$ bits of precision
  for each~$\gamma\in L$. This can be done in $\smash{\Otilde}(n\ell^d h)$ binary
  operations.
\end{proof}

\section{Reconstructing genus~2 curves}
\label{sec:curves}

\subsection{Mestre's algorithm}
\label{sec:Mestre}

Given a quadruple of Igusa--Clebsch invariants $I = (I_2 : I_4 : I_6 : I_{10})$ over~$\Q$,
there always exists a genus 2 curve over $\Qbar$ having these invariants, but
it cannot always be defined over~$\Q$. This phenomenon is known as Mestre's
obstruction. Given~$I$, Mestre~\cite{Mestre} constructs a conic $L$ over~$\Q$
together with an effective divisor $D$ of degree 6 on this conic with the
following properties. If $L(\Q) \neq \emptyset$ and hence $L \cong \P^1$, then
there exists a genus 2 curve over $\Q$ with invariants $I$ whose Weierstrass
points correspond to the points in~$D$. If $L(\Q) = \emptyset$, there is no
genus 2 curve over $\Q$ having these invariants.  This reconstruction algorithm
has been implemented in many computer algebra systems and the reader is
referred to \cite{Mestre} for more details about the method.

In our setting, Mestre's obstruction does not arise, and $L(\Q)$ is always
non-empty. Indeed, by \cref{prop:ratl-invariants}, we only manipulate typical
p.p.~abelian surfaces defined over~$\Q$. By \cite[Appendix, Thm.\
4]{SerreAppendix}, these surfaces all are Jacobians of genus~2 curves defined
over~$\Q$.

Mestre's algorithm usually produces curves with very large coefficients. Their
sizes can be reduced by applying \cite{StollCremona}, which has been
implemented in \cite{PARI} and \cite{Magma}. Note that even this reduced model
is not necessarily unique.

\subsection{Identifying the correct twists}
\label{sub:twists}

Having constructed a curve~$C'$ over~$\Q$ from the invariants output by
\Cref{alg:cplx-2}, we have no guarantee yet that $\Jac(C)$ will be isogenous
to~$\Jac(C')$ over~$\Q$: we can only say that $\Jac(C')$ is a twist of the
abelian surface isogenous to~$\Jac(C)$.  Since $\End(\Jac(C')_{\Qbar}) = \Z$,
the only possible twists of~$\Jac(C')$ are quadratic twists, and correspond to
quadratic twists of the curve~$C'$ itself. Therefore, there will be a unique
twist~$C''$ of~$C'$ (up to isomorphism over $\Q$) such that $\Jac(C)$ is
isogenous to~$\Jac(C'')$.

We use the following method to find~$C''$. For a genus 2 curve $C$ and a prime
of good reduction~$\ell$ of~$\Jac(C)$, we denote by~$a_{\ell}(C)\in\Z$ the
trace of Frobenius on the reduction on~$\Jac(C)$ modulo~$\ell$.

\begin{algorithm}\label{alg:twists}
  {\em Input:} curves $C$ and $C'$ of genus 2 over $\Q$ with typical Jacobians,
  such that some twist of $\Jac(C')$ is isogenous to $\Jac(C)$.\\
  {\em Output:} the unique twist $C''$ of $C'$ such that $\Jac(C)$ is isogenous
  to~$\Jac(C'')$.
\begin{enumerate}[label={\bf Step \arabic*.},labelwidth=\widthof{{\bf Step 2.}},leftmargin=!]
\item Compute a set $\mathcal{B}$ of primes containing the bad primes of $C$
  and $C'$. Let
  $$G = \langle -1 \rangle \times \langle b : b \in \mathcal{B} \rangle \subset
  \Q^* / \Q^{*2}.$$
\item Find auxiliary primes $\ell_1, \ldots, \ell_k$ such that the Frobenius
  traces $a_{\ell_i}(C)$ are nonzero and the map
  $$\mu\colon G \to \{ \pm 1 \}^k,\quad  x \mapsto
  \left(\genfrac(){}{0}{x}{\ell_i}\right)_{1\leq i\leq k}$$ is injective.
\item Identify the unique element $g \in G$ for which the twist $C''$ of $C'$
  by $g$ satisfies $a_{\ell_i}(C'') = a_{\ell_i}(C)$ for every~$1\leq i\leq k$,
  and return~$C''$.
\end{enumerate}
\end{algorithm}

\begin{prop}
\cref{alg:twists} terminates and is correct.
\end{prop}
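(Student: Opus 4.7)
The plan is to verify two things: (a) step 2 of the algorithm terminates, producing a finite list of primes $\ell_1, \ldots, \ell_k$ with the stated properties, and (b) step 3 then uniquely identifies the correct twist $C''$ among the finite family $\{C'_g : g \in G\}$. The key fact underlying both is the following twisting formula: for $g \in \Q^*/\Q^{*2}$ represented by a squarefree integer coprime to a prime $\ell$ of good reduction of $C'$, the quadratic twist $C'_g$ satisfies $a_\ell(C'_g) = \left(\frac{g}{\ell}\right)\,a_\ell(C')$. Once each $a_{\ell_i}(C')$ is nonzero, the map $g\mapsto \bigl(a_{\ell_i}(C'_g)/a_{\ell_i}(C')\bigr)_{1\le i\le k}$ on $G$ therefore coincides with $\mu$.

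For termination, the group $G$ is a finite elementary abelian $2$-group of order at most $2^{|\mathcal{B}|+1}$. I would first show that the set of primes $\ell$ with $a_\ell(C)\neq 0$ is infinite: since $\End(\Jac(C)_{\Qbar}) = \Z$, Serre's open image theorem yields that the image of Galois in $\GSp(4,\Z_\ell)$ is open, and a positive-density set of Frobenius elements has nonzero trace by Chebotarev. Next, for each nontrivial $g \in G$, Chebotarev applied to the quadratic extension $\Q(\sqrt{g})/\Q$ yields a density-$\tfrac12$ set of primes $\ell$ with $\left(\frac{g}{\ell}\right) = -1$. Hence iteratively picking primes $\ell_i$ with $a_{\ell_i}(C) \neq 0$ that further shrink $\ker \mu$ terminates after at most $\log_2|G|$ steps, producing the desired injectivity.

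For correctness, I would first show that the unique twist class $g^*\in\Q^*/\Q^{*2}$ with $C'_{g^*} = C''$ actually lies in $G$. Since $C''$ is isogenous to $\Jac(C)$ over $\Q$, it has the same conductor and hence the same bad primes as $C$, which lie in $\mathcal{B}$. Moreover the bad primes of $C''$ contain the bad primes of $C'$ together with the primes ramifying in $\Q(\sqrt{g^*})/\Q$; the latter are the odd prime divisors of a squarefree representative of $g^*$, and also $2$ when $g^*\not\equiv 1\pmod 4$. Hence all these primes lie in $\mathcal{B}$ (assuming $\mathcal{B}$ also contains $2$ whenever $g^*$ could be ramified there, a harmless enlargement that is standard in practice), forcing $g^* \in G$. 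Having chosen $\ell_i$ with $a_{\ell_i}(C) \neq 0$, the isogeny $\Jac(C)\sim \Jac(C'')$ gives $a_{\ell_i}(C'') = a_{\ell_i}(C) \neq 0$, so $a_{\ell_i}(C') = \pm a_{\ell_i}(C) \neq 0$ as well, and the twisting formula applies. The condition $a_{\ell_i}(C'_g) = a_{\ell_i}(C)$ for all $i$ is then equivalent to $\mu(g) = \mu(g^*)$, which by injectivity of $\mu$ pins down $g = g^*$ uniquely. The main subtlety is precisely this verification that $g^*\in G$ via the conductor and ramification argument.
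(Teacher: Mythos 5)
Your overall strategy is the same as the paper's: produce the auxiliary primes by density arguments, then use the N\'eron--Ogg--Shafarevich criterion to show that the twisting class $g^*$ is supported on primes of bad reduction and hence lies in $G$, so that injectivity of $\mu$ and the matching of Frobenius traces pin down the unique correct twist. There is, however, one genuine gap in your termination argument. You combine two separate density statements --- that the primes with $a_\ell(C)\neq 0$ have \emph{positive} density, and that the primes with $\genfrac(){}{}{g}{\ell}=-1$ have density $\tfrac12$ --- and conclude that a suitable prime exists in the intersection. This does not follow: two sets of positive density can be disjoint. The paper closes exactly this gap by citing a Lang--Trotter-type result (using that $\Jac(C)$ is typical) asserting that $a_\ell(C)=0$ only on a set of primes of density \emph{zero}; the intersection with the positive-density set of primes satisfying the prescribed congruence/splitting conditions is then still infinite. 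To repair your version you would need either that density-zero statement or a joint Chebotarev argument in the compositum of the relevant extensions, neither of which you supply.

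Two further remarks on the correctness half, which is otherwise sound. Your intermediate claim that the bad primes of $C''$ contain those of $C'$ is false in general (twisting can turn bad reduction into good reduction); what the argument needs, and what the paper states, is only that a prime $p$ dividing a squarefree representative of $g^*$ and \emph{good} for $C'$ forces bad reduction of $C''$, hence of $C$, at $p$ --- primes of $g^*$ that are bad for $C'$ are in $\mathcal{B}$ anyway. Finally, your hedge about enlarging $\mathcal{B}$ to contain $2$ is unnecessary: membership of $g^*$ in $G$ only requires the sign $-1$ (which is a generator of $G$ by construction) and the primes actually dividing $g^*$; for every such prime, including $2$, the character of $\Q(\sqrt{g^*})$ is ramified, so the argument above applies verbatim.
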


\begin{proof}
  First, we will prove the existence of the auxiliary primes
  $\ell_1, \ldots, \ell_k$. By \cite[Theorem 1]{LangTrotterResult} and the fact
  that $\Jac(C)$ is typical, the equality $a_{\ell}(C) = 0$ holds only for a
  density~0 subset of primes.  Write
  $\mathcal{B} \cup \{-1\} = \{b_1,\ldots,b_k\}$. For each~$1\leq i\leq k$, by
  quadratic reciprocity, we can find congruence conditions on~$\ell$
  guaranteeing that $\genfrac(){}{}{b_i}{\ell} = -1$ and
  $\genfrac(){}{}{b_j}{\ell} = 1$ for each~$j\neq i$. By Dirichlet's prime
  number theorem and the fact that only a density 0 subset of the primes are
  excluded, there exists a prime $\ell_i$ (in fact infinitely many) satisfying
  these congruence conditions and such that $a_{\ell_i}(C) \ne 0$. With this
  choice of $\ell_1, \ldots, \ell_k$, the map~$\mu$ is injective.

  Next, we recall the Néron--Ogg--Shafarevich criterion~\cite{SerreTate}: an
  abelian surface~$A$ has good reduction at a prime~$p$ if and only if for all
  primes $\ell \ne p$, the Galois representation $T_{\ell}(A)$ is unramified
  at~$p$. Hence, if $A$ has good reduction at $p$ and $A'$ is the quadratic
  twist of $A$ by a squarefree integer~$D$ divisible by~$p$, then $A'$ has bad
  reduction at $p$. Indeed, the Galois representation $T_{\ell}(A')$ is
  obtained from $T_{\ell}(A)$ by tensoring with the quadratic character
  associated with $\Q(\sqrt{D})$, which is ramified at $p$. On the other hand,
  any abelian surface isogenous to~$A$ must have the same primes of bad
  reduction.

  As a consequence, the correct twist $\Jac(C'')$ of $\Jac(C')$ is given by a
  squarefree integer~$D$ that can only be divisible by primes of bad reduction
  of $\Jac(C)$ or $\Jac(C')$. These form a subset of $\mathcal{B}$, so the
  correct twist is among those enumerated by $G$.  By the choice of the
  auxiliary primes $\ell_1, \ldots, \ell_k$, the tuples of Frobenius traces
  $(a_{\ell_1}, \ldots, a_{\ell_k})$ take distinct values for all the twists
  enumerated by~$G$, so there is a unique output in Step~3.
\end{proof}

\Cref{alg:twists} has an exponential complexity in terms of the number of bad
primes of $C$ and $C'$, but this number is small in practice.

\begin{remark}
  In the case of elliptic curves, an analogue of \cref{alg:twists} would not be
  needed. Indeed, if~$E$ is an elliptic curve over~$\Q$ with automorphism
  group~$\{\pm 1\}$, then the modular invariants of~$E$ in~$\P^{4,6}(\Q)$ determine the $\Q$-isomorphism class of~$E$, as
  twisting~$E$ by~$d$ multiplies its invariants by~$d^2$ and~$d^3$. Thus we
  would obtain the correct twist as a direct result of the computations
  over~$\C$.

  In genus~$2$ however, quadratic twists have the same modular invariants: this
  comes from the fact that twisting an abelian surface~$A$ by~$d\in \Q^\times$ acts on
  $\Omega^1(A)$ as~$\Diag(\sqrt{d},\sqrt{d})$, hence on~$\wedge^2\Omega^1(A)$
  as multiplication by~$d$, which is a rational number. Nevertheless, one would
  still be able to compute the correct twist directly (and thus circumvent
  \cref{alg:twists}) by considering vector-valued Siegel modular forms, or
  equivalently by keeping track of big period matrices. This goes beyond the
  scope of this paper, and \cref{alg:twists} was sufficient for our
  experiments.
\end{remark}

\section{Examples}
\label{sec:examples}

We now give explicit illustrations of the methods developed above. First, we
discuss an example of a 1-step $31$-isogeny; then, we report on the results of
running our algorithm on a large dataset of Jacobians of genus 2 curves that
includes the current LMFDB data~\cite{LMFDB}. The prime $\ell=31$ is the
largest prime for which the Galois representation on~$A[\ell]$ is not
surjective among all the abelian surfaces~$A$ in this dataset~\cite{galreps}.

All computations were ran on a server with an
AMD EPYC 7713 2GHz CPU 
and Sage 9.7 \cite{Sage}, Magma 2.28-2 \cite{Magma}, GP/PARI 2.15.0 \cite{PARI}, and \texttt{pydhme v0.0.6} installed.\footnote{A Sage interface for \cite{hdme} is available at \url{https://github.com/edgarcosta/pyhdme/}.}

\subsection{A 1-step 31-isogeny}
\label{sec:31isog}

Consider the hyperelliptic curve
\[
  C \colon y^2 + (x + 1)y = x^5 + 23x^4 - 48x^3 + 85x^2 - 69x + 45.
\]
The conductor of $\Jac(C)$ is $7^2 \cdot 31^2$.  Combining this with the study
of local Euler factors at $p \in \{3, 5, 11\}$, we see that
$\calL_1 = \emptyset$ and $\calL_2 \subseteq \{31\}$ in the notation of~\S
\ref{sec:subgroups}.

The Igusa--Clebsch invariants $(I_2:I_4:I_6:I_{10})$ of $C$ in $\P^{2,4,6,10}(\Q)$ are
\begin{equation*}
(-324608: 7340502400: -589129410429504: 5306537926135312384),
\end{equation*}
from which we can deduce that $\Jac(C)$ has the following modular invariants:
\begin{equation*}
  (m_4, m_6, m_{10}, m_{12}) = (1909600, 2582145496, 45252529, -59231181184).
\end{equation*}
We apply \cref{alg:cplx-1} with $\ell = 31$ and $i = 1$.  After computing a
period matrix
\[
    \tau \approx
    \begin{pmatrix}
    1.69708 i
    &
    0.31188
    +
    0.84854 i
    \\
    0.31188
    +
    0.84854 i
    &
   -0.18812
    +
    2.09922 i
    \end{pmatrix}
\]
in the Siegel fundamental domain with 300 bits of precision, we conclude that there is a unique coset representative
\begin{equation*}
\gamma = \left(\begin{array}{rrrr}
1 & 0 & 0 & 7 \\
0 & 1 & 7 & 23 \\
0 & 0 & 31 & 0 \\
0 & 0 & 0 & 31
\end{array}\right) \in S(31)
\end{equation*}
such that $N(M_4, \gamma)$ contains an integer.
Indeed, we have
\begin{equation*}
  \begin{aligned}
&|N(M_{4}, \gamma) - \alpha^2 \cdot 318972640| < 7.8 \times 10^{-47}\\
&|N(M_{6}, \gamma) - \alpha^3 \cdot 1225361851336| < 5.5 \times 10^{-39}\\
&|N(M_{10}, \gamma) - \alpha^5 \cdot 10241530643525839| < 1.6 \times 10^{-29}\\
&|N(M_{12}, \gamma) + \alpha^6 \cdot 307105165233242232724| < 4.6 \times 10^{-22}\\
  \end{aligned}
\end{equation*}
where $\alpha = 2^{2} \cdot 3^{2} \cdot 31$.
We then employ \cref{alg:cplx-2}, working with \numprint{4128800} bits of precision, to certify that an abelian surface with projective invariants
 $$
 { (318972640,1225361851336,10241530643525839,-307105165233242232724)}$$
is indeed isogenous to $\Jac(C)$ via a 1-step $31$-isogeny.

Mestre's algorithm and a reduction step (\S\ref{sec:Mestre}) yield the hyperelliptic curve
\small
\begin{equation*}
  y^2 = -1624248 x^6 + 5412412 x^5 - 6032781 x^4 + 876836 x^3 - 1229044 x^2 - 5289572 x - 1087304.
  \label{eqn:Cprime}
\end{equation*}
\normalsize
Applying \cref{alg:twists} we learn that the desired curve~$C''$ is the
quadratic twist of this hyperelliptic curve by $-83761$, and is given by the
equation
\small
\begin{equation*}
  y^2 + x y = -x^5 + 2573 x^4 + 92187 x^3 + 2161654285 x^2 + 406259311249 x + 93951289752862
\end{equation*}
\normalsize
with discriminant $7^{2} \cdot 31^{3} \cdot 83761^{12}$.

The overall computation took
175 
minutes of CPU time and used 6.5 gigabytes of ram, of which roughly 90\% is spent certifying the existence of an isogeny between these two curves, i.e.~in \cref{alg:cplx-2}.

Given~$C$ and~$C''$, we can also independently produce a certificate for the
existence of an isogeny of the correct degree~\cite{rigendos}.  It took about
6.5 CPU hours to produce the 2.8 megabyte certificate.

\subsection{LMFDB and beyond}

We now report on the application of our algorithms to a dataset of
\numprint{1743737} genus 2 curves with trivial geometric endomorphism algebra.
These are all the typical abelian surfaces in a dataset of approximately 5
million curves with conductor up to $2^{20}$ provided to us by Andrew
Sutherland \cite{drewsdatabases}. This dataset expands the current set of genus
2 curves in the $L$-functions and modular forms database (LMFDB) \cite{LMFDB}.

The \numprint{1743737} curves are split among \numprint{1440894} isogeny
classes, while the LMFDB subset contains \numprint{63107} curves split among
\numprint{62600} isogeny classes. These isogeny classes have been identified
using Frobenius traces only \cite[\S 4.3]{genus2database}, a heuristic method
whose results are confirmed by our computations.

We have applied our algorithms to one curve per isogeny class and found
\numprint{600948} new curves in total. \Cref{table:degrees} lists the degrees
of the irreducible isogenies that we found (in other words we ignore 2-step
isogenies arising as the composition of two rational 1-step isogenies), and
\cref{table:classes size} shows the sizes of the \numprint{1440894} isogeny
classes.  Most of the large classes only feature Richelot isogenies,
i.e.~1-step 2-isogenies.
In total, however, only \numprint{242442} of the \numprint{600948} new curves
can be reached from the original dataset via Richelot isogenies.

\renewcommand{\arraystretch}{1.1}
\begin{table}[!ht]
\begin{tabular}{c|ccc|c}
  $d$ & \begin{tabular}{@{}c@{}}Number of isogenies\\ of degree $d$\end{tabular}
      &&
  $d$ & \begin{tabular}{@{}c@{}}Number of isogenies\\ of degree $d$\end{tabular}
  \\
\cline{1-2}\cline{4-5}
$2^{2}$ & \numprint{419157}
 && 
$7^{4}$ & \numprint{246}\\
$2^{4}$ & \numprint{693519}
 && 
$11^{4}$ & \numprint{9}\\
$3^{2}$ & \numprint{11568}
 && 
$13^{2}$ & \numprint{20}\\
$3^{4}$ & \numprint{29742}
 && 
$13^{4}$ & \numprint{9}\\
$5^{2}$ & \numprint{415}
 && 
$17^{2}$ & \numprint{4}\\
$5^{4}$ & \numprint{2440}
 && 
$31^{2}$ & \numprint{1}\\
$7^{2}$ & \numprint{154}
\end{tabular}
  \bigskip
\caption{Number of isogenies of each degree in the extended dataset.}
\label{table:degrees}
\end{table}

\vspace{-5mm}

\begin{table}[!ht]
\begin{tabular}{c|ccc|c}
  $k$ & \begin{tabular}{@{}c@{}}Number of isogeny\\ classes of size $k$\end{tabular}
      &&
  $k$ & \begin{tabular}{@{}c@{}}Number of isogeny\\ classes of size $k$\end{tabular}
  \\
\cline{1-2}\cline{4-5}
1 & \numprint{1032456}
 && 
12 & \numprint{52}\\
2 & \numprint{116847}
 && 
14 & \numprint{102}\\
3 & \numprint{197253}
 && 
16 & \numprint{1555}\\
4 & \numprint{54543}
 && 
18 & \numprint{706}\\
5 & \numprint{15547}
 && 
20 & \numprint{120}\\
6 & \numprint{14323}
 && 
22 & \numprint{99}\\
7 & \numprint{430}
 && 
24 & \numprint{6}\\
8 & \numprint{5594}
 && 
28 & \numprint{4}\\
9 & \numprint{35}
 && 
30 & \numprint{8}\\
10 & \numprint{1214}
\end{tabular}
\bigskip
\caption{Distribution of isogeny class sizes in the extended dataset.}
\label{table:classes size}
\end{table}

\vspace{-5mm}

\begin{remark}
  It is worth noting that~\numprint{195806} of the~\numprint{197253} isogeny
  classes of size~$3$ and~\numprint{15523} of the~\numprint{15547} isogeny
  classes of size~$5$ are only made up of $2$-step $2$-isogenies, of degree 16.  The
  isogeny graphs in these two cases are a triangle $\triangle$ and a bowtie
  $\bowtie$ respectively.  Moreover, there is no isogeny class of size~$2$ made
  of a single $2$-step $2$-isogeny.

  These observations can be explained as follows: the existence of a~$2$-step
  $2$-isogeny~$\varphi_1 \colon A_1 \rightarrow A_2$ always implies the
  existence of a triangle consisting of three~$2$-step $2$-isogenies. Indeed,
  assume that~$\ker f$ is generated by~$e_1,2e_2,2f_2$,
  where~$(e_1,e_2,f_1,f_2)$ is a symplectic basis of~$A_1[4]$.  Then the
  subgroup generated by~$e_1+2f_1,2e_2,2f_2$ is another rational maximal
  isotropic subgroup of $A_1[4]$, and gives rise to an
  isogeny~$\varphi_2 \colon A_1 \rightarrow A_3$.  Furthermore, one can show
  that there exists another~$2$-step
  $2$-isogeny~$\varphi_3 \colon A_2 \rightarrow A_3$ such
  that~$\varphi_2 \circ [2] = \varphi_3 \circ \varphi_1$.
\end{remark}

The whole computation took
111 
days of CPU time
and used 215 megabytes of RAM on average per class.
Only 30 classes taking more than 10 minutes.
In these 29 cases, we had to search for and potentially
certify isogenies of large degree.  For 6 of the classes, it took on average 18 minutes to prove the nonexistence of 1 and 2-step $29$-isogenies; one of them
has LMFDB label \lmfdbclass{976}{a}, and contains a Jacobian with a
$29$-torsion point.  The remainder correspond to $23 = 9 + 9 + 4 + 1$ isogeny
classes consisting of exactly two abelian surfaces linked by isogenies of
degrees $11^4$, $13^4$, $17^2$, and~$31^2$ respectively, as listed in
\cref{table:degrees}.  The only class that took more than 1.5 hours is the
example discussed in \S \ref{sec:31isog}, featuring the isogeny of
degree~$31^2$.

The largest degree of an irreducible isogeny was $13^4$. For example, the class
\lmfdbclass{349}{a} has two abelian surfaces connected by such an isogeny,
namely the Jacobians of the two curves
\begin{align*}
  \lmfdbcurve{349}{a}{349}{1}  \colon& y^2 + (x^3 + x^2 + x + 1) y = -x^3 - x^2\\
  C \colon& y^2 + y = x^5 - 363 x^4 - 2517 x^3 + 151106 x^2 + 487525 x - 16355862.
\end{align*}

\Cref{table:graphs} continues our zoological study of isogeny graphs by listing
all graphs on at most four vertices that we observed. We label an edge
by~$\ell$ (resp.~$\ell^2$) when it corresponds to a 1-step (resp.~2-step)
$\ell$-isogeny.

\renewcommand{\arraystretch}{2}
\begin{table}[!ht]
\begin{tabular}{cl}
{\begin{tikzpicture}
    \node (0) at (0,0) {};
    \draw[fill] (0) circle (0.05);
    \node (1) at (2,0) {};
    \draw[fill] (1) circle (0.05);
    \draw (0) -- node[above] {$n$} (1);
\end{tikzpicture}}
&
{for $n\in \{2,3,5,7,13,17,31,9,25,49,121,169\}$}
\\

\begin{tikzpicture}
    \node (0) at (0,0) {};
    \draw[fill] (0) circle (0.05);
    \node (1) at (2,0) {};
    \draw[fill] (1) circle (0.05);
    \node (2) at (4,0) {};
    \draw[fill] (2) circle (0.05);
    \draw (0) -- node[above] {$n$} (1);
    \draw (1) -- node[above] {$n$} (2);
\end{tikzpicture}
& for $n\in \{3,5,7,9\}$ \\

\begin{tikzpicture}
    \node (1) at (90:1) {};
    \draw[fill] (1) circle (0.05);
    \node (2) at (210:1) {};
    \draw[fill] (2) circle (0.05);
    \node (3) at (330:1) {};
    \draw[fill] (3) circle (0.05);
    \draw (1) -- node[left] {$4$} (2);
    \draw (2) -- node[below] {$4$} (3);
    \draw (1) -- node[right] {$4$} (3);
\end{tikzpicture}
& \\

\begin{tikzpicture}
    \node (0) at (0,0) {};
    \draw[fill] (0) circle (0.05);
    \node (1) at (1.5,0) {};
    \draw[fill] (1) circle (0.05);
    \node (2) at (3,0) {};
    \draw[fill] (2) circle (0.05);
    \node (3) at (4.5,0) {};
    \draw[fill] (3) circle (0.05);
    \draw (0) -- node[above] {$n$} (1);
    \draw (1) -- node[above] {$n$} (2);
    \draw (2) -- node[above] {$n$} (3);
\end{tikzpicture}
& for $n\in \{3,5\}$ \\

{\begin{tikzpicture}
    \node (0) at (0,0) {};
    \draw[fill] (0) circle (0.05);
    \node (1) at (2,0) {};
    \draw[fill] (1) circle (0.05);
    \node (2) at (0,2) {};
    \draw[fill] (2) circle (0.05);
    \node (3) at (2,2) {};
    \draw[fill] (3) circle (0.05);
    \draw (0) -- node[above] {$n$} (1);
    \draw (2) -- node[above] {$n$} (3);
    \draw (0) -- node[left] {$m$} (2);
    \draw (1) -- node[right] {$m$} (3);
\end{tikzpicture}}
& \raisebox{2.5em}{
\renewcommand{\arraystretch}{1}
\begin{tabular}{@{}l@{}} \hspace{-1em} for $(m,n)\in \{(2,3), 2,5), (2,9), (2,25),$\\
\qquad\qquad\quad\  $(3,3), (5,9), (9,25)\}$\end{tabular}
\renewcommand{\arraystretch}{2}} \\

{\begin{tikzpicture}
    \node (0) at (0,0) {};
    \draw[fill] (0) circle (0.05);
    \node (1) at (90:1.5) {};
    \draw[fill] (1) circle (0.05);
    \node (2) at (210:1.5) {};
    \draw[fill] (2) circle (0.05);
    \node (3) at (330:1.5) {};
    \draw[fill] (3) circle (0.05);
    \draw (0) -- node[left] {$n$} (1);
    \draw (0) -- node[above] {$n$} (2);
    \draw (0) -- node[above] {$n$} (3);
\end{tikzpicture}
}
& \raisebox{3em}{for $n\in \{2, 3\}$}
\end{tabular}
\bigskip
\caption{Observed isogeny graphs with 2 to 4 vertices.}
\label{table:graphs}
\end{table}

The \numprint{1440894} isogeny classes represent 71 non-isomorphic graphs, the
largest having size 30 (see
\url{https://github.com/edgarcosta/genus2isogenies/tree/main/data/graphs_2e20}). In
every class containing 18 or more abelian surfaces, the only irreducible
isogenies are Richelot isogenies.

Despite the size of our dataset, this list
of graphs is not complete. The following curve, suggested by Noam Elkies, gives
rise to an isogeny graph consisting of 42 vertices connected by Richelot
isogenies:
$$y^2 = (x+4)(x+11)(4x-1)(12x+13)(15x-4).$$
This curve has conductor
$2^{24} \cdot 3^{3} \cdot 5^{2} \cdot 7^{2} \cdot 13 \cdot 17^{2}$ and
discriminant
$2^{40} \cdot 3^{18} \cdot 5^{6} \cdot 7^{6} \cdot 13^{4} \cdot 17^{4}$. Its
isogeny graph is displayed in \Cref{fig:rank42}.

\begin{figure}[ht]
\centering
  \scalebox{0.755}{\input{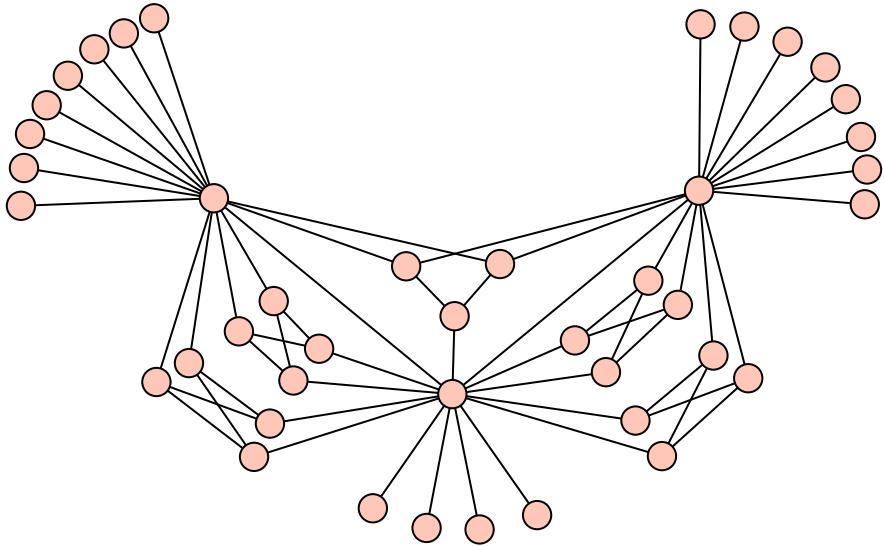}}
  \caption{Isogeny graph with 42 vertices.}
  \label{fig:rank42}
\end{figure}

\subsection{Sanity checks}

Running such a large scale computation allows us to perform several sanity
checks regarding the correctness of our implementation.  For each isogeny
found, we heuristically confirmed that the Jacobians were isogenous in two
ways.  First, we confirmed that all the traces of Frobenius agree for all
primes up to $2^{16}$, using \cite{smalljac}, that do not divide the discriminants of the curves
involved. Second, independent analytic computations based on \cite{rigendos}
have confirmed the isogeny degrees that we computed.

Regarding completeness of the isogeny classes, we ran two checks. As indicated
above, we started from only one curve in each of the
\numprint{1440894} (heuristic) isogeny classes in our dataset, and we checked
that all the other curves in that class indeed appeared in our result. We also
confirmed that our isogeny classes are closed under Richelot isogenies, using
the function \texttt{RichelotIsogenousSurfaces} in \cite{Magma} based on
algebraic formulas specific to this case.

\printbibliography

\end{document}